\definecolor{citation}{rgb}{0,.40,.80}
\definecolor{reference}{rgb}{.80,0,.40}
\numberwithin{equation}{section}
\theoremstyle{plain}
\newtheorem{theorem}{Theorem}[section]
\newtheorem{lemma}[theorem]{Lemma}
\newtheorem{proposition}[theorem]{Proposition}
\newtheorem{corollary}[theorem]{Corollary}
\newtheorem{conjecture}[theorem]{Conjecture}
\theoremstyle{definition}
\newtheorem{definition}[theorem]{Definition}
\newtheorem{example}[theorem]{Example}
\newtheorem{remark}[theorem]{Remark}
\newcommand{\st}{\mid} 
\newcommand{\set}[1]{\left\{ \, #1 \, \right\}}
\newcommand{\Db}{\mathrm{D^b}}
\newcommand{\Gal}{\mathrm{Gal}}
\DeclareMathOperator{\Pic}{Pic}
\DeclareMathOperator{\Spec}{Spec}
\newcommand{\toptriv}{\mathrm{tt}}
\newcommand{\Tate}{\mathrm{Tate}}
\newcommand{\id}{\mathrm{id}}
\newcommand{\pr}{\mathrm{pr}}
\newcommand{\ind}{\mathrm{ind}}
\DeclareMathOperator{\characteristic}{char}
\DeclareMathOperator{\Br}{Br}
\DeclareMathOperator{\Brtop}{Br^{\mathrm{top}}}
\DeclareMathOperator{\CH}{CH}
\newcommand{\rtop}{\mathrm{top}}
\newcommand{\an}{\mathrm{an}}
\newcommand{\Ktop}[1][]{\rK_{#1}^{\rtop}}
\newcommand{\per}{\mathrm{per}}
\newcommand{\tors}{\mathrm{tors}}
\newcommand{\cl}{\mathrm{cl}}
\newcommand{\et}{\mathrm{\acute{e}t}}
\newcommand{\Az}{\mathrm{Az}}
\newcommand{\Hdg}{\mathrm{Hdg}}
\newcommand{\PGL}{\mathrm{PGL}}
\DeclareMathOperator{\codim}{codim}
\DeclareMathOperator{\rk}{{rk}}
\newcommand{\cO}{\mathcal{O}}
\newcommand{\cA}{\mathcal{A}}
\newcommand{\cX}{\mathcal{X}}
\newcommand{\rH}{\mathrm{H}}
\newcommand{\rK}{\mathrm{K}}
\newcommand{\rM}{\mathrm{M}}
\newcommand{\rR}{\mathrm{R}}
\newcommand{\rZ}{\mathrm{Z}}
\newcommand{\bC}{\mathbf{C}}
\newcommand{\bF}{\mathbf{F}}
\newcommand{\bG}{\mathbf{G}}
\newcommand{\bZ}{\mathbf{Z}}
\newcommand{\bP}{\mathbf{P}}
\newcommand{\bQ}{\mathbf{Q}}
\newcommand{\bmu}{\bm{\mu}}
\begin{document}

\title{The period-index problem and Hodge theory}

\author{Aise Johan de Jong}
\address{Department of Mathematics, Columbia University, New York, NY 10027 \smallskip}
\email{dejong@math.columbia.edu}

\author{Alexander Perry}
\address{Department of Mathematics, University of Michigan, Ann Arbor, MI 48109 \smallskip}
\email{arper@umich.edu}

\thanks{
A.\ J.\ de Jong was partially supported by NSF grant DMS-2052934.  
A.\ Perry was partially supported by NSF grants DMS-2112747, DMS-2052750, and DMS-2143271, and a Sloan Research Fellowship.} 


\begin{abstract} 
Conditional on the Lefschetz standard conjecture in degree 2, 
we prove that the index of a Brauer class on a smooth projective 
variety divides a fixed power of its period, uniformly in smooth families. 
In the other direction, we reinterpret in more classical terms recent work of Hotchkiss 
which gives Hodge-theoretic lower bounds on the index of Brauer classes. 
We also prove versions of our results over arbitrary algebraically closed base fields, 
and as an application construct qualitatively new counterexamples to the integral Tate conjecture. 
\end{abstract}

\maketitle


\section{Introduction} 

The goal of this paper is to bring Hodge theory to bear on the period-index problem. 

\subsection{The period-index problem} 
Let $K$ be a field. 
Recall that a central simple algebra over $K$ is a unital associative $K$-algebra which has finite dimension over $K$, no nontrivial two-sided ideals, and center $K$. 
Two such algebras $A$ and $B$ are called Morita equivalent if there exists an isomorphism of matrix algebras $\rM_r(A) \cong \rM_s(B)$ for some positive integers $r$ and $s$. 
The Brauer group $\Br(K)$ is the set of all central simple algebras over $K$ modulo Morita equivalence, with group operation given by tensor product. 
By a classical result, any central simple algebra over $K$ is Morita equivalent to a unique central division algebra over $K$, so $\Br(K)$ can be thought of as a group classifying such division algebras. 

There are two important numerical invariants measuring the complexity of a 
Brauer class $\alpha \in \Br(K)$: 
its \emph{period} $\per(\alpha)$, equal to to order of $\alpha$ in $\Br(K)$; and its 
\emph{index} $\ind(\alpha)$, equal to the integer $\sqrt{\dim_K(D)}$ where $D$ is the central division algebra of class $\alpha$ 
(whose dimension, like any central simple algebra, is necessarily a square). 
As is well-known, $\per(\alpha)$ and $\ind(\alpha)$ share the same prime factors, and $\per(\alpha)$ divides $\ind(\alpha)$. 
The \emph{period-index problem} is to determine an integer $e$ so that $\ind(\alpha)$ divides $\per(\alpha)^e$. 
Work of many authors over the past century has suggested the following precise conjecture, which appears to have first been raised in print by Colliot-Th\'{e}l\`{e}ne \cite{CT-PI} (see also \cite{CT-bourbaki} and \cite{lieblich-period-index}). 

\begin{conjecture}[Period-index conjecture]
\label{conjecture-pi} 
Let $K$ be a field of finite transcendence degree $d$ over an algebraically closed field $k$. 
For all $\alpha \in \Br(K)$, 
$\ind(\alpha)$ divides $\per(\alpha)^{d-1}$. 
\end{conjecture}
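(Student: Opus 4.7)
The plan is to reduce Conjecture~\ref{conjecture-pi} to a geometric statement about Brauer classes on a smooth projective variety, reformulate the existence of low-rank twisted sheaves as a Hodge-theoretic problem, and resolve the latter using the Lefschetz standard conjecture in degree $2$. First, a standard spreading-out argument together with purity for the Brauer group on smooth schemes lets us choose a smooth projective $d$-fold $X$ over $k$ with $K = k(X)$ and lift the given Brauer class to a class $\alpha \in \Br(X)$ of the same period. A well-known result identifies the index of the restriction $\alpha_K \in \Br(K)$ with the greatest common divisor of ranks of $\alpha$-twisted locally free sheaves on non-empty opens of $X$. It therefore suffices to produce, for each such $\alpha$, an $\alpha$-twisted coherent sheaf whose rank divides a controlled power of $n := \per(\alpha)$.

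The core idea is that a twisted coherent sheaf is determined up to $\rK$-theory by its twisted Chern character, which takes values in a Mukai-type rational cohomology of $X$ depending on $\alpha$. Using twisted topological $\rK$-theory, one can construct directly a topological class whose total rank is exactly $n^{d-1}$. The question becomes whether such a class is \emph{algebraic}, i.e.\ realized by an element of the algebraic twisted Grothendieck group of $X$. Splitting by cohomological degree, this amounts to asking that each graded piece of the twisted Chern character lie in the image of the cycle class map. In degrees $\geq 4$ there is ample flexibility to arrange algebraicity inductively, by tensoring with line bundles and adding pullbacks of algebraic cycles from lower-dimensional subvarieties. The degree-$2$ piece, on the other hand, is rigid: it is pinned down modulo $\rH^2(X,\bZ)$ by $\alpha$ itself, and producing an algebraic representative of it is the crux of the argument.

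This is the point where the Lefschetz standard conjecture in degree $2$ enters decisively: it asserts that every rational Hodge class in $\rH^2$ of a smooth projective variety is algebraic, which is precisely what is needed to realize the obstructive degree-$2$ component of the twisted Chern character by an algebraic cycle. Combined with the inductive algebraicity in higher degrees, this yields a twisted $\rK$-theory class on $X$ whose rank divides $n^{d-1}$, and a Grothendieck--Riemann--Roch argument promotes it to an honest twisted coherent sheaf of the required rank. Passing to a nonempty open of $X$ and restricting to the generic point then gives the bound on $\ind(\alpha_K)$.

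The main obstacle is twofold. First, \emph{uniformity in families}: one needs the exponent to be independent of both $X$ and $\alpha$, which forces the twisted Mukai lattice and its algebraic-to-topological comparison to be controlled in smooth families; this is presumably handled by a spreading/specialization argument, but requires care to ensure the construction behaves well under deformation. Second, and more seriously, obtaining the \emph{sharp} exponent $d-1$ of Conjecture~\ref{conjecture-pi}, rather than merely some uniform but larger power of $n$, is likely beyond the reach of the approach sketched here without the sharp Hodge-theoretic calibration provided by Hotchkiss's lower bounds — so matching the upper-bound construction against those lower bounds in every degree appears to be the deepest point.
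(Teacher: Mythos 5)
This statement is Conjecture~\ref{conjecture-pi}, which the paper does not prove --- indeed the paper explicitly says it is wide open for $d \geq 3$, and that not even the existence of \emph{some} uniform exponent $e$ (Conjecture~\ref{conjecture-pi-bound}) is known unconditionally. What the paper actually establishes is Theorem~\ref{theorem-pi-bound}: \emph{conditional} on the Lefschetz standard conjecture in degree~$2$, there exists \emph{some} positive integer $e$ (far from the conjectural $d-1$) such that $\ind(\alpha) \mid \per(\alpha)^e$. So the first thing to note is that you have attempted to prove an open conjecture, and you even concede at the end that your sketch cannot deliver the sharp exponent $d-1$; that concession is correct, and it means the proposal does not prove the stated result.

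Beyond that, there is a serious error at the crux of your argument: you write that the Lefschetz standard conjecture in degree~$2$ ``asserts that every rational Hodge class in $\rH^2$ of a smooth projective variety is algebraic.'' That is not what the Lefschetz standard conjecture says. The algebraicity of rational $(1,1)$-classes is the Lefschetz $(1,1)$ theorem, a classical and unconditional fact. The Lefschetz standard conjecture in degree~$2$ (Conjecture~\ref{conjecture-lefschetz}) instead asks that the \emph{inverse} of the hard Lefschetz isomorphism $L^{n-2}\colon \rH^2(X,\bQ) \to \rH^{2n-2}(X,\bQ)$ be induced by an algebraic correspondence on $X \times X$; it is about algebraicity of a correspondence, not of individual $\rH^2$-classes. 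Relatedly, the mechanism by which the paper uses this conjecture is quite different from what you describe: the paper constructs a correspondence from $X$ to a complete intersection surface $Y \subset X$ such that $\Gamma^* \circ i^*$ acts as multiplication by an integer on $\Br(X)$ (Proposition~\ref{proposition-cycle}), then invokes the \emph{known} period-index theorem for surfaces (Theorem~\ref{theorem-pi-surface}) together with Matzri's bound (Theorem~\ref{theorem-matzri}) to conclude. The circle of ideas you gesture at --- twisted topological $\rK$-theory, algebraicity of twisted Chern characters, Hodge classes in the twisted Mukai lattice --- belongs to the paper's \emph{lower}-bound results (Theorem~\ref{theorem-lower-bound} and Hotchkiss's $\ind_{\Hdg}$), where one tries to obstruct the index from being small, not to prove it is small. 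So the proposal conflates the two halves of the paper and attributes the wrong conjecture to the key step.
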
 

The conjecture is vacuously true 
for $d \leq 1$ since then $\Br(K) = 0$ by an elementary argument when $d = 0$ and by Tsen's theorem when $d = 1$. 
It is also true for $d = 2$ by \cite{dJ-period-index} when $\per(\alpha)$ is prime to the characteristic of $k$ and by \cite{lieblich-period-index, dJ-starr} in general.  
In higher dimensions, the conjecture is wide open: 
it is not even known for a single field $K$ of transcendence degree $d \geq 3$ 
that there exists an integer $e$ such $\ind(\alpha)$ divides $\per(\alpha)^e$ for all $\alpha \in \Br(K)$. 
In this paper, we study the higher-dimensional problem in the global setting.  

Namely, let $X$ be a smooth projective variety over $k$. 
By Grothendieck \cite{grothendieck-brauer}, there is an extension $\Br(X)$ of the Brauer group 
to this situation, classifying Azumaya algebras on $X$ up to Morita equivalence. 
If $K$ is a field of finite transcendence degree over $k$ and $X$ is a model for $K$, then the restriction map $\Br(X) \to \Br(K)$ is an isomorphism onto the subgroup of unramified Brauer classes (and in particular the image is independent of the model $X$). 
By discriminant avoidance \cite{dJ-starr}, Conjecture~\ref{conjecture-pi} 
reduces to the unramified case, in that if it holds for all unramified Brauer classes on all $K$ then it holds in general. 
In this sense, unramified classes are the crucial ones from the perspective of the period-index problem. 
This focuses attention on the following statement, where for $\alpha \in \Br(X)$ 
we denote by $\per(\alpha)$ and $\ind(\alpha)$ the period and index of the restriction of $\alpha$ to the generic point. 

\begin{conjecture}
\label{conjecture-pi-bound}
Let $X$ be a smooth projective variety over an algebraically closed field $k$. 
There exists a positive integer $e$ such that for all $\alpha \in \Br(X)$, 
$\ind(\alpha)$ divides $\per(\alpha)^e$. 
\end{conjecture}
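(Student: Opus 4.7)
The plan is to reduce to $k = \mathbb{C}$, introduce a topological analogue of the index via twisted topological $K$-theory, establish the period-index bound for this topological index by a purely topological argument, and then invoke the Lefschetz standard conjecture in degree $2$ to compare the topological and algebraic indices. In particular, I expect the result to be proved conditionally on that standard conjecture, as advertised in the abstract.

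First I would reduce to $k = \mathbb{C}$. Spreading out $(X, \alpha)$ over a finitely generated subring, specializing to a sufficiently general closed point, and using that period and index behave well under extension of algebraically closed fields of the same characteristic and under generic specialization in smooth families, reduces in characteristic zero to $k = \mathbb{C}$. In positive characteristic, a parallel strategy using $\ell$-adic \'etale cohomology and \'etale topological $K$-theory in place of singular cohomology should handle the remaining case, possibly with a slightly worse exponent.

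Over $\mathbb{C}$, using the identification of $\Br(X)$ with a subgroup of $\rH^3(X, \mathbb{Z})_{\tors}$ from the exponential sequence, I would define the topological index $\ind^{\rtop}(\alpha)$ as the positive generator of the image of the rank map $\Ktop[0](X, \alpha) \to \mathbb{Z}$ on twisted topological $K$-theory. The twisted Atiyah--Hirzebruch spectral sequence has first nontrivial differential $d_3$ equal to cup product with the class of $\alpha$, and its higher differentials are controlled by Massey-type operations in $\alpha$. After inverting $\per(\alpha)$ all of these differentials vanish, and a filtration-by-codimension bookkeeping argument translates this into an unconditional divisibility $\ind^{\rtop}(\alpha) \mid \per(\alpha)^{e}$, for an exponent $e$ depending only on $\dim X$.

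The crux is then to bound $\ind(\alpha)$ by a fixed power of $\ind^{\rtop}(\alpha)$, which is where the Lefschetz standard conjecture enters. Given a topological $\alpha$-twisted $K$-theory class $[F]$ realizing $\ind^{\rtop}(\alpha)$, I would seek an algebraic $\alpha$-twisted perfect complex whose rank divides a bounded power of $\mathrm{rk}(F)$. The Lefschetz standard conjecture in degree $2$ delivers algebraicity of the K\"unneth projector onto $\rH^2$, hence algebraicity of the rational Hodge classes on self-products of $X$ that one needs to control twisted Chern characters. Combined with a twisted Grothendieck--Riemann--Roch analysis, this should allow one to promote $[F]$ to an algebraic $\alpha$-twisted class at the cost of a bounded power. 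The principal obstacle is precisely this algebraic realization: the Lefschetz standard conjecture only provides algebraicity of rational Hodge classes, and one must still verify that the topological representative (or a controlled multiple of it) lies in the appropriate twisted Hodge-theoretic image, and that the resulting algebraic twisted sheaf has rank divisible only by a bounded power of $\per(\alpha)$. Tracking the accumulation of exponents through each of these reductions, and formulating a sufficiently robust twisted Hodge-theoretic framework, is where the real work lies.
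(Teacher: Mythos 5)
Your proposal takes a genuinely different route from the paper's and contains a fatal flaw. The paper's actual argument does not pass through twisted topological $K$-theory at all for the upper bound. Instead it uses the Lefschetz standard conjecture in degree~$2$ to build a rational algebraic correspondence $\zeta \in \CH^2(X \times Y)_{\bQ}$ from $X$ to a complete-intersection surface $Y \subset X$ with $\zeta^* \circ i^* = \id_{\rH^2(X,\bQ)}$; clearing denominators and torsion gives an integral correspondence $\Gamma$ such that $\Gamma^* \circ i^*$ is multiplication by a fixed integer $m$ on $\Br(X)$ (Proposition~\ref{proposition-cycle}). The known period-index theorem for surfaces (Theorem~\ref{theorem-pi-surface}) then yields $\ind(m\alpha) \mid C\cdot\per(\alpha)^N$ after unwinding the correspondence via Galois covers (Lemma~\ref{lemma-surface}), and Matzri's $C_r$-field bound (Theorem~\ref{theorem-matzri}) handles the remaining classes of period dividing $m$. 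Your sketch uses none of these ingredients --- in particular it omits the two-dimensional period-index theorem, which is the load-bearing input.

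The step in your argument that does not merely need ``real work'' but actually fails is bounding $\ind(\alpha)$ by a fixed power of $\ind^{\rtop}(\alpha)$. If $\alpha$ is \emph{topologically trivial}, i.e.\ in the kernel of $\Br(X) \to \Brtop(X) \cong \rH^3(X,\bZ)_{\tors}$, then by Lemma~\ref{lemma-top-triv} there is an $\alpha$-twisted topological line bundle and hence $\ind^{\rtop}(\alpha)=1$, while $\ind(\alpha)$ can be arbitrarily large: Proposition~\ref{proposition-index-av} gives topologically trivial classes on products of elliptic curves with $\ind(\alpha)=\per(\alpha)^{\dim X-1}$. So no bound of the form $\ind(\alpha) \mid \ind^{\rtop}(\alpha)^{c}$ can hold, conditionally or otherwise. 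The culprit is your claim that the Lefschetz standard conjecture ``delivers algebraicity of the rational Hodge classes on self-products of $X$'': algebraicity of the K\"unneth projector onto $\rH^2$ is far weaker than algebraicity of all rational Hodge classes, which is the full Hodge conjecture; and the \emph{integral} refinement you would need to control the rank of a twisted algebraic representative is exactly what the second half of the paper (Theorem~\ref{theorem-IHC}, Corollary~\ref{corollary-ihc-fail-av}) shows can fail on Severi--Brauer varieties of class $\alpha$. Topological-versus-algebraic index discrepancies are the \emph{source} of the paper's lower-bound obstructions, not an obstacle LSC lets you circumvent.
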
 

Even this weakened version of Conjecture~\ref{conjecture-pi} is wide open when $\dim(X) \geq 3$. 

\subsection{Upper bounds on the index via Hodge theory}
Our first main result, stated below as Theorem~\ref{theorem-pi-bound}, gives a positive answer to 
Conjecture~\ref{conjecture-pi-bound} conditional on one of Grothendieck's standard conjectures \cite{grothendieck-standard}. 
In the introduction we focus on the case of complex varieties for simplicity, but in the body of the paper we also prove results over general algebraically closed fields. 

The setup for the relevant motivic conjecture is as follows. 
Let $X$ be a smooth projective complex variety of dimension~$n$, with $h \in \rH^2(X, \bQ)$ the first Chern class of an ample line bundle. 
For $0 \leq i \leq n$, the hard Lefschetz theorem shows that the map 
\begin{equation*}
    L^{n-i} \coloneqq (-) \cup h^{n-i} \colon \rH^i(X, \bQ) \to \rH^{2n-i}(X, \bQ) 
\end{equation*}
is an isomorphism. 
The Lefschetz standard conjecture predicts that the inverse isomorphism is algebraic: 

\begin{conjecture}[Lefschetz standard conjecture in degree $i$]
\label{conjecture-lefschetz}
For $X$, $h$, and $i$ as above, there exists a codimension $i$ cycle $\lambda \in \CH^i(X \times X)_{\bQ}$ such that the induced map 
\begin{equation*}
    \lambda^* \colon \rH^{2n-i}(X, \bQ) \to \rH^{i}(X, \bQ) 
\end{equation*}
is the inverse of $L^{n-i}$.  
\end{conjecture}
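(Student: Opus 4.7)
My plan is to reduce the statement to an instance of the Hodge conjecture on $X \times X$, and then address that by whatever method is appropriate to the degree $i$. The hard Lefschetz operator $L^{n-i} \colon \rH^i(X, \bQ) \to \rH^{2n-i}(X, \bQ)$ is a morphism of rational Hodge structures (after a Tate twist), so its inverse is as well. Via K\"unneth and Poincar\'e duality on $X \times X$, this inverse corresponds to a rational Hodge class of type $(i, i)$ in $\rH^{2i}(X \times X, \bQ)$; any algebraic cycle $\lambda \in \CH^i(X \times X)_{\bQ}$ whose cycle class is this Hodge class induces the desired map. So it suffices to prove that this specific $(i, i)$-Hodge class on $X \times X$ is algebraic.

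The input needed is classically available in several cases: $i \in \{0, n\}$ is handled by (a multiple of) the diagonal; $i = 1$ follows from the Lefschetz $(1,1)$-theorem applied to $X \times X$, and $i = n - 1$ follows from it by Poincar\'e duality; abelian varieties in any degree $i$ are covered by Lieberman's construction via the Fourier transform; and any $X$ with $\dim X \leq 2$ is then known. The case the paper actually requires for its main theorem is $i = 2$ for general $X$, which is essentially the Hodge conjecture for $(2, 2)$-classes on $X \times X$.

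The hard part --- and the reason this conjecture has stood open for more than sixty years --- is that for $i \geq 2$ we have no general mechanism to produce an algebraic representative of a prescribed Hodge class beyond divisors and intersections thereof. The standard avenues I would attempt (deformation-theoretic spreading of Hodge classes combined with a variational Hodge conjecture, Kuga--Satake type correspondences reducing to abelian or K3 models, or motivic and integral $\rK$-theoretic constructions in the spirit of recent work on the integral Hodge conjecture) each bottleneck at the step of explicitly exhibiting the cycle. I therefore expect Conjecture~\ref{conjecture-lefschetz} to be invoked in the sequel as a hypothesis rather than proved, and any direct plan I could outline would stall at precisely this obstruction.
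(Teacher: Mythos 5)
You correctly identified that the statement is a \emph{conjecture}, not a theorem: the paper does not prove it, and indeed no proof exists. Conjecture~\ref{conjecture-lefschetz} is precisely the Lefschetz standard conjecture, which the paper introduces as a hypothesis for Theorem~\ref{theorem-pi-bound} (and its variants), and explicitly describes as ``wide open in general'' in Remark~\ref{remark-lc-results}. Your reduction to a case of the Hodge conjecture on $X \times X$ is the standard observation: the inverse of $L^{n-i}$ is a morphism of Hodge structures, so by K\"unneth and Poincar\'e duality it corresponds to a rational $(i,i)$-Hodge class on $X \times X$, and the conjecture asserts this class is algebraic. Your catalogue of known special cases (small $i$ via Lefschetz $(1,1)$, low-dimensional $X$, abelian varieties via Lieberman, plus the further cases in Remark~\ref{remark-lc-results} such as Hilbert schemes of surfaces and K3$^{[n]}$-type varieties via Arapura and Charles--Markman) matches what the paper itself points to. Your conclusion --- that any attempted proof stalls at producing an algebraic representative of a prescribed codimension-$\geq 2$ Hodge class, and that the conjecture should be treated as an input hypothesis downstream --- is exactly right and is how the paper proceeds.
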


\begin{remark}
\label{remark-lc-independent-h}
Conjecture~\ref{conjecture-lefschetz} is in fact independent of the choice of polarization $h$. 
Indeed, it is equivalent to the existence of $\lambda \in \CH^i(X \times X)_{\bQ}$ such that $\lambda^* \colon \rH^{2n-i}(X, \bQ) \to \rH^{i}(X, \bQ)$ is an isomorphism; see  \cite[Theorem 4.1]{kleiman-standard} or \cite[Lemma~6]{charles-remarks}. 
\end{remark}

\begin{remark}
\label{remark-lc-results}
Conjecture~\ref{conjecture-lefschetz} is wide open in general, but there are positive results for some special classes of varieties. 
For instance, the Lefschetz standard conjecture is known in all degrees when $X$ is any of the following: 
a variety of dimension at most $2$ \cite{kleiman-weil}, 
an abelian variety \cite{lieberman}, 
a threefold of Kodaira dimension less than $3$
\cite{tankeev-I, tankeev-II}, a Hilbert scheme of points on a smooth projective 
surface \cite{arapura}, or a holomorphic symplectic variety of K3$^{[n]}$-type (i.e. deformation equivalent to a Hilbert scheme of points on a K3 surface) \cite{charles-markman}. There are also partial results on the Lefschetz standard conjecture in degree $2$ for holomorphic symplectic varieties that admit a covering by Lagrangians  \cite{voisin-lagrangian}.  
\end{remark}

Now we can state our result. 

\begin{theorem}
\label{theorem-pi-bound}
Let $X \to S$ be a smooth proper morphism of complex varieties. 
Assume that the very general fiber of $X \to S$ is projective and satisfies the Lefschetz standard conjecture in degree $2$.  
Then there exists a positive integer~$e$ such that for all geometric points $\bar{s}$ of  $S$ and $\alpha \in \Br(X_{\bar{s}})$, 
$\ind(\alpha)$ divides $\per(\alpha)^e$. 
In particular, in the absolute case when $S = \Spec(\bC)$, if the Lefschetz standard conjecture holds in degree $2$ for $X$ then so does Conjecture~\ref{conjecture-pi-bound}. 
\end{theorem}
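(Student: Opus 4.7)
The plan has three phases: first, reduce the family statement to an absolute statement in which the exponent $e$ depends only on topological invariants of the fiber; second, import Hotchkiss's twisted Hodge-theoretic framework and reinterpret the index of a Brauer class as the positive generator of the rank component of a twisted Chern character; and third, use the Lefschetz standard conjecture in degree $2$ to algebraize enough Hodge classes in the twisted cohomology to produce twisted sheaves of controlled rank.

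For the reduction, I observe that in a smooth proper family $X \to S$ of complex varieties the higher direct images $\rR^i \pi_* \bZ$ are local systems on $S^{\mathrm{an}}$, so every topological invariant of a fiber (Betti numbers, torsion in $\rH^3$, discriminants of intersection lattices, and so on) is constructible on $S$. Stratifying $S$ and invoking a specialization argument for twisted sheaves, it suffices to prove the absolute statement for a single smooth projective $X / \bC$ satisfying Lefschetz in degree $2$, with $e$ depending only on the topological type of $X$. The family conclusion then follows from the local-system structure on cohomology, together with the fact that once the bound holds on the very general fiber, the Brauer groups of nearby fibers admit compatible descriptions.

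In the absolute case, I would import Hotchkiss's framework: for $\alpha \in \Br(X)$ of period $n$, one has a twisted integral Hodge structure $\tH(X, \alpha)$ and a twisted Chern character
\begin{equation*}
\ch_\alpha \colon \rK_0(\Db(X, \alpha)) \longrightarrow \tH(X, \alpha)_\bQ
\end{equation*}
whose rank component has image $\ind(\alpha) \cdot \bZ$. Proving $\ind(\alpha) \mid n^e$ thus reduces to constructing a twisted perfect complex whose Chern character has rank dividing $n^e$. For this, the algebraic correspondence $\lambda \in \CH^2(X \times X)_\bQ$ from the Lefschetz standard conjecture in degree $2$ acts on $\tH(X, \alpha)$ compatibly with the Hodge filtration, and a polarization and lattice computation using the integrality of $\tH(X, \alpha)_\bZ$ should produce a rational $(0,0)$-class of rank dividing $n^e$ for some $e$ depending only on the topology of $X$. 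Lifting this class back to $\rK_0(\Db(X, \alpha))$ would use that Lefschetz in degree $2$ implies algebraicity of the relevant Hodge classes in twisted cohomology, by a spread and specialization argument in the style of the classical passage from hard Lefschetz to variational Hodge for divisors.

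The main obstacle is this last phase: Lefschetz in degree $2$ is a comparatively weak input, and leveraging it both into a polarization computation in twisted cohomology and into an algebraicity/lifting statement for Hodge classes in twisted $\rK$-theory is the crux of the argument. A secondary difficulty is the uniformity of $e$ across all classes in $\Br(X)$; since the divisible part of $\Br(X)$ contains classes of arbitrarily large period, one must ensure the construction respects the divisibility structure of $\Br(X)$, which should follow from its topological source in $\rH^3(X, \bZ)$ being finitely generated.
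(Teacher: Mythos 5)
Your reduction of the family statement to the absolute case is in the right spirit: the paper does this by extending a Brauer class from a fiber to an \'etale neighborhood of the base (Lemma~\ref{lemma-extend-brauer}), showing the index can only decrease under specialization (Lemma~\ref{lemma-specialize-index}), and using that the very general complex fiber is isomorphic to the geometric generic fiber. What you sketch is compatible with that, if less precise.

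The absolute case, however, is where your plan breaks down, and the gap is fundamental rather than technical. You propose to use the Lefschetz cycle $\lambda \in \CH^2(X \times X)_\bQ$ to act on Hotchkiss's twisted cohomology $\tH(X, \alpha)$, produce a rational $(0,0)$-class of controlled rank, and then lift it back to $\rK_0(\Db(X,\alpha))$ using that ``Lefschetz in degree $2$ implies algebraicity of the relevant Hodge classes in twisted cohomology.'' That last implication does not hold. Algebraicity of a Hodge class (even in the untwisted case, let alone the twisted $\rK$-theoretic setting) is precisely the content of the Hodge conjecture, which is vastly stronger than and not a consequence of the Lefschetz standard conjecture. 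The analogy you invoke — the passage from hard Lefschetz to algebraicity for divisors — is a red herring: for divisors, algebraicity comes from the Lefschetz $(1,1)$-theorem, a special feature of codimension $1$ that has no analog in higher codimension. Your step producing a rational $(0,0)$-class via a ``polarization and lattice computation'' is also unsubstantiated: it is unclear why $\lambda$ acting on $\tH(X,\alpha)$ should produce classes of bounded rank with the right integrality, and this would need to be worked out; but even if it could be, the lifting step is the insurmountable obstacle.

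The paper's actual argument deliberately avoids ever needing to algebraize a Hodge class. Instead, the Lefschetz cycle $\lambda$ is used to manufacture an \emph{integral} algebraic correspondence $\Gamma \in \CH^2(X \times Y)$ to a complete-intersection surface $Y \subset X$ with the property that $\Gamma^* \circ i^* = m \cdot \id$ on $\Br(X)$ for some integer $m$ depending only on $\lambda$ and the torsion in $\rH^2$ and $\rH^3$ (Proposition~\ref{proposition-cycle}). One then pulls $\alpha$ back to $Y$, applies the \emph{unconditional} period--index theorem for surfaces (Theorem~\ref{theorem-pi-surface}), and pushes back along $\Gamma$ to conclude that $\ind(m\alpha)$ divides $C \cdot \per(\alpha)^N$ for constants $C, N$ independent of $\alpha$ (Lemma~\ref{lemma-surface}, which is purely Galois-theoretic). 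This reduces matters to classes of period dividing $m$ after a controlled base change, and Matzri's theorem for $C_r$-fields (Theorem~\ref{theorem-matzri}) then bounds the index of bounded-period classes uniformly. The whole engine is the surface case of the period--index conjecture together with elementary manipulations of correspondences on Brauer groups; Hotchkiss's twisted Hodge theory appears in this paper only for the \emph{lower} bounds of Theorem~\ref{theorem-lower-bound}, not for the upper bounds.

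In short: your outline is correct up to the reduction to the absolute case, but the core mechanism you propose for the absolute case would need the (twisted integral) Hodge conjecture, which the hypothesis does not grant. You need to replace it with the surface-restriction argument, which turns the Lefschetz cycle into a genuine algebraic correspondence rather than merely a cohomological operator.
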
 

As usual, ``the very general fiber'' means $X_s$ for $s \in S(\bC)$ lying in the complement of some countable union of proper Zariski closed subsets of $S$. 

We also prove an analogous result over an arbitrary algebraically closed base field $k$ (with the role of the very general fiber replaced by the geometric generic fiber); see Theorem \ref{theorem-pi-bound-absolute-k} for the absolute statement  and Theorem~\ref{theorem-pi-bound-relative-k} for the relative one. 
One subtlety is that we need to assume the Lefschetz standard conjecture in degree $2$ for $\ell$-adic cohomology, uniformly in all primes $\ell \neq \characteristic(k)$; see Conjecture~\ref{conjecture-l-lefschetz} and Remark~\ref{remark-l-lefschetz}. 

\begin{remark}
The integer $e$ given by Theorem~\ref{theorem-pi-bound} only depends on the relative dimension of $X \to S$, the size of the torsion in $\rH^2(X_0, \bZ)$ and $\rH^3(X_0, \bZ)$ for a fiber over a point $0 \in S(\bC)$, 
and some simple numerical data associated to the cycle $\lambda \in \CH^i(X_s \times X_s)_{\bQ}$ on a very general fiber $X_s$ given by the Lefschetz standard conjecture,  
cf. Proposition~\ref{proposition-cycle}.
\end{remark}

For families of surfaces Theorem~\ref{theorem-pi-bound} is not interesting, as the period-index conjecture is already known in dimension $2$, but it is interesting in higher dimensions. 
As noted in Remark~\ref{remark-lc-results}, there are various cases where the Lefschetz standard conjecture in degree $2$ is known, and therefore where Theorem~\ref{theorem-pi-bound} becomes unconditional. 
For an abelian variety $X$, one can show directly that $\ind(\alpha)$ divides $\per(\alpha)^{\dim(X)}$ for all $\alpha \in \Br(X)$, so in this case we get nothing new. 
However, the other cases mentioned in Remark~\ref{remark-lc-results} lead to the following instances of Conjecture~\ref{conjecture-pi-bound}, which as far as we know are new. 

\begin{corollary}
Let $X \to S$ be a smooth projective morphism of complex varieties, whose fibers are one of the following: threefolds of Kodaira dimension less than $3$, Hilbert schemes of points on smooth projective surfaces, or holomorphic symplectic varieties of K3$^{[n]}$-type. 
Then there exists a positive integer $e$ such that for all geometric points $\bar{s}$ of $S$ and $\alpha \in \Br(X_{\bar{s}})$, $\ind(\alpha)$ divides $\per(\alpha)^e$. 
\end{corollary}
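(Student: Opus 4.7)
The plan is to apply Theorem~\ref{theorem-pi-bound} directly; the corollary should be a formal consequence once the hypotheses are checked. First I would recall that $X \to S$ is assumed smooth and projective, so every fiber, and in particular the very general fiber, is a smooth projective complex variety. Thus the only nontrivial thing to verify is that the very general fiber satisfies the Lefschetz standard conjecture in degree~$2$.

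Next I would dispatch this in each of the three cases by citing the relevant result from Remark~\ref{remark-lc-results}. In case (i), every fiber is a smooth projective threefold of Kodaira dimension less than $3$, so by Tankeev \cite{tankeev-I, tankeev-II} the Lefschetz standard conjecture holds in all degrees for each fiber. In case (ii), every fiber is (isomorphic to) a Hilbert scheme of $n$~points on some smooth projective surface, where $n$ is locally constant on $S$; Arapura's theorem \cite{arapura} gives the Lefschetz standard conjecture in all degrees for such varieties. In case (iii), every fiber is a holomorphic symplectic variety of K3$^{[n]}$-type, and Charles--Markman \cite{charles-markman} give the Lefschetz standard conjecture in all degrees for all such varieties (not just Hilbert schemes themselves). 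In all three cases, the very general fiber in particular satisfies the Lefschetz standard conjecture in degree~$2$.

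Finally, Theorem~\ref{theorem-pi-bound} then produces a single positive integer $e$, depending only on the family $X \to S$, such that $\ind(\alpha)$ divides $\per(\alpha)^{e}$ for every geometric point $\bar{s}$ of $S$ and every $\alpha \in \Br(X_{\bar{s}})$, which is the statement of the corollary. The ``hard part'' is of course already hidden in Theorem~\ref{theorem-pi-bound} and in the deep work establishing the Lefschetz standard conjecture in each of the three geometric settings; no genuinely new argument is required at the level of the corollary itself.
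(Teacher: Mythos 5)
Your proof is correct and is exactly what the paper intends: the corollary is an immediate application of Theorem~\ref{theorem-pi-bound}, with the hypothesis on the very general fiber verified case-by-case by the references in Remark~\ref{remark-lc-results} (Tankeev, Arapura, Charles--Markman). No different route or additional argument is needed.
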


Let us briefly explain the idea behind Theorem~\ref{theorem-pi-bound}, focusing on the absolute case where $S = \Spec(\bC)$ to which the general case can be reduced. 
Let $i \colon Y \hookrightarrow X$ be a complete intersection surface of class $h^{n-2}$, where $n = \dim(X)$ and $h$ is an ample class on $X$. 
The algebraicity of 
the inverse Lefschetz map 
$(L^{n-2})^{-1} \colon \rH^{2n-2}(X, \bQ) \to \rH^2(X, \bQ)$ leads to an integral cycle $\Gamma \in \CH^2(X \times Y)$ such that $\Gamma^* \circ i^*$ acts by multiplication by an integer $m$ on $\Br(X)$. Using the period-index conjecture for surfaces, we show that this implies there are constants $C$ and $N$ such that $\ind(m\alpha)$ divides $C \cdot \per(\alpha)^N$ for all $\alpha \in \Br(X)$ (Lemma~\ref{lemma-surface}). 
This allows us to focus on classes of bounded period, for which a result of Matzri \cite{matzri} shows that the index divides a power of the period depending only on the dimension of the variety. 

\subsection{Lower bounds on the index via Hodge theory} 
In the second part of the paper, we turn to a question in the opposite direction of the above results: how to produce lower bounds on the index of Brauer classes? 
In the unramified setting, the first results of this kind were obtained in  \cite{CT-examples, kresch}, with examples showing that Conjecture~\ref{conjecture-pi} is sharp. 

Recently, Hotchkiss \cite{hotchkiss-pi} introduced a beautiful new perspective on this question. 
Let $X$ be a smooth projective complex variety and $\alpha \in \Br(X)$. 
By the Hodge theory of categories developed in \cite{perry-CY2}, the twisted derived category $\Db(X, \alpha)$ has an associated integral Hodge structure $\Ktop[0](X, \alpha)$, which receives a map $\rK_0(X, \alpha) \to \Ktop[0](X, \alpha)$ from the Grothendieck group of $\Db(X, \alpha)$ that factors through the subgroup of integral Hodge classes. 
Motivated by the fact that $\ind(\alpha)$ equals the minimal positive rank of an element of $\rK_0(X, \alpha)$, 
Hotchkiss defines the \emph{Hodge-theoretic index}
$\ind_{\Hdg}(\alpha)$ of $\alpha$ as the minimal positive rank of a Hodge class in $\Ktop[0](X, \alpha)$. 
By definition 
$\ind_{\Hdg}(\alpha)$ 
divides $\ind(\alpha)$, so lower bounds on the former give lower bounds on the latter. This method is especially effective when $\alpha$ is \emph{topologically trivial}, or equivalently can be written as $\alpha = \exp(B)$ for a \emph{rational $B$-field} $B \in \rH^2(X, \bQ)$ (see Remark~\ref{remark-rational-B}), 
as in this case Hotchkiss calculates $\Ktop[0](X, \alpha)$ explicitly in terms of the Hodge theory of $X$. 

Using the Hodge theory of Severi--Brauer varieties, we explain a different point of view on  
Hotchkiss's restrictions on the index of a topologically trivial Brauer class. 
For $B \in \rH^2(X, \bQ)$ and a positive integer $e$, 
define for $1 \leq i \leq \min \set{ e, \dim(X) }$ the polynomial 
\begin{equation}
\label{pBe}
    p_i^{B,e}(x_1, \dots, x_i) =
    \binom{e}{i}B^i + 
    \sum_{j=1}^{i} \binom{e-j}{i-j} B^{i-j} x_j  \in \rH^{2*}(X, \bQ)[x_1, \dots, x_i]. 
\end{equation}
Note that $p_i^{B,e}$ is weighted homogeneous of degree $2i$ if $x_j$ is given weight $2j$. 
Below we use the terminology that a class in $\rH^*(X, \bQ)$ is \emph{integral} if it lies in the image of $\rH^*(X, \bZ)$. 

\begin{theorem}
\label{theorem-lower-bound} 
Let $X$ be a smooth projective complex variety. 
Let $\alpha \in \Br(X)$ be a topologically trivial class, and let $B \in \rH^{2}(X, \bQ)$ be a rational $B$-field for $\alpha$. 
If $e$ is a positive integer such that $\ind(\alpha)$ divides $e$, then 
there exist Hodge classes 
\begin{equation*}
c_j \in \rH^{j,j}(X, \bQ), \quad 1 \leq j \leq \min \set{ e, \dim(X) }, 
\end{equation*} 
such that the class 
$p_i^{B,e}(c_1, \dots, c_i) \in \rH^{2i}(X, \bQ)$ is integral for all $1 \leq i \leq \min \set{ e, \dim(X) }$. 
\end{theorem}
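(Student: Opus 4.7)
My plan is to exploit the Hodge theory of a Severi--Brauer variety attached to $\alpha$. Since $\ind(\alpha)$ divides $e$, there is an Azumaya algebra of degree $e$ with class $\alpha$, giving a Severi--Brauer variety $\pi \colon P \to X$ of relative dimension $e-1$. On $P$, the line bundle $\cO_P(\per(\alpha))$ is honest (its obstruction is $\alpha^{\per(\alpha)} = 0$), so I set $\xi = c_1(\cO_P(\per(\alpha)))/\per(\alpha) \in \rH^2(P, \bQ)$. This is a rational Hodge class restricting to the hyperplane class $H$ on every fiber of $\pi$, but typically non-integral. On the other hand, since $\alpha$ is topologically trivial, the topological obstruction in $\rH^3(X, \bZ)$ to realizing $P$ as the projectivization of a topological rank-$e$ bundle on $X$ vanishes, producing an integral class $\eta \in \rH^2(P, \bZ)$ that also restricts to $H$ on fibers. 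After adjusting $\eta$ by an integral pullback from $X$ (which corresponds to the ambiguity of the $B$-field modulo $\rH^2(X, \bZ)$), I arrange $\xi = \eta + \pi^* B$ for the prescribed $B$-field $B$.

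With this setup I compare two flavors of the projective bundle formula. Rationally, $\rH^*(P, \bQ)$ is free over $\rH^*(X, \bQ)$ on the basis $\{1, \xi, \ldots, \xi^{e-1}\}$, giving a unique monic relation
\[
\xi^e + a_1 \xi^{e-1} + \cdots + a_e = 0, \qquad a_j \in \rH^{j,j}(X, \bQ),
\]
where the Hodge type $(j,j)$ follows from $\xi \in \rH^{1,1}(P, \bQ)$ and the fact that Leray--Hirsch respects the Hodge decomposition. Integrally, $\{1, \eta, \ldots, \eta^{e-1}\}$ restricts to an integral basis on each fiber, so $\rH^*(P, \bZ)$ is free over $\rH^*(X, \bZ)$ on this basis, yielding a unique monic relation
\[
\eta^e + b_1 \eta^{e-1} + \cdots + b_e = 0, \qquad b_j \in \rH^{2j}(X, \bZ).
\]

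The core of the argument is to compare these via $\xi = \eta + \pi^* B$. Substituting and expanding $(\eta + \pi^* B)^{e-j}$ binomially, the coefficient of $\eta^{e-k}$ in the rational relation becomes
\[
\binom{e}{k} B^k + \sum_{j=1}^{k} \binom{e-j}{k-j} B^{k-j} a_j,
\]
which is precisely $p_k^{B,e}(a_1, \ldots, a_k)$ from~\eqref{pBe}. By uniqueness of the minimal monic relation satisfied by $\eta$, this must equal the integral relation term by term, giving $p_k^{B,e}(a_1, \ldots, a_k) = b_k$ for each $k$. Setting $c_j = a_j$ for $1 \leq j \leq \min\{e, \dim(X)\}$ (higher indices vanish for dimension reasons) concludes the proof, since each $b_k$ is integral.

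The main technical point I expect to need care with is the identification $\xi - \eta = \pi^* B$ with $B$ being an honest $B$-field for $\alpha$ in the sense of the exponential sequence $\rH^2(X, \bQ) \to \rH^2(X, \cO^*)$. This amounts to tracing how the discrepancy between the algebraic and topological Chern classes of the tautological twisted line bundle on a Severi--Brauer variety encodes the $B$-field of its Brauer class -- a standard but bookkeeping-intensive computation I would verify directly from the construction of Severi--Brauer varieties from $\alpha$-twisted sheaves.
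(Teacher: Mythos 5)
Your proof is correct, and it takes a genuinely different route from the paper's, though both rest on the same coefficient comparison. The paper (Theorem~\ref{theorem-divisibility}) works with a Severi--Brauer variety $\pi \colon P \to X$ of relative dimension $r \geq e$ and uses the description of the index via twisted linear subvarieties (Lemmas~\ref{lemma-twisted-linear} and~\ref{lemma-index-SB}) to produce an integral Hodge class $\gamma \in \rH^{e,e}(P, \bZ)$ restricting to the class of a codimension-$e$ linear subspace on each fiber; writing $\gamma$ in both the integral $h$-decomposition and the rational $(h + \pi^*B)$-decomposition of $\rH^*(P)$ from Lemma~\ref{lemma-cohomology-SB} and matching coefficients is what produces the polynomials $p_i^{B,e}$. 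You instead take $P$ of relative dimension exactly $e-1$ (available because $\ind(\alpha) \mid e$, via an Azumaya algebra of degree $e$), and compare the two monic relations satisfied by $\eta$ and $\xi$ in the integral and rational Leray--Hirsch decompositions; substituting $\xi = \eta + \pi^*B$ and expanding binomially is the same bookkeeping, but bypasses twisted linear subvarieties entirely. The paper's version buys the converse implication (Theorem~\ref{theorem-divisibility} is an equivalence, not just one direction) and sets up the explicit class $\gamma$ that is reused in \S\ref{section-obstruction-PI}--\S\ref{section-large-index}, while your version is more economical when only the stated implication is wanted. The identification $\xi - \eta = \pi^*B$ that you flag as needing care is exactly the content of Remark~\ref{remark-top-O1} together with Lemma~\ref{lemma-cohomology-SB}\eqref{nhb-algebraic} (via $\cO^{\mathrm{top}}_P(1)$ and the algebraicity of $nh + \pi^*b$); your instinct that this step requires verification is right, but the verification is standard and appears in the paper.
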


The existence of Hodge classes $c_j \in \rH^{j,j}(X, \bQ)$ as in the conclusion of the theorem gives an obstruction to $\ind(\alpha)$ dividing $e$, which can be nontrivial in general. 
Indeed, 
for $\per(\alpha) = e = 2$   Theorem~\ref{theorem-lower-bound} recovers Kresch's obstruction to $\ind(\alpha) = 2$ \cite{kresch} (see Example~\ref{example-kresch}), which is known to be effective for some examples. 
Using the obstruction for $e = \per(\alpha)^{\dim(X)-2}$, we also construct in Proposition~\ref{proposition-index-av} examples with $X$ an abelian variety (even a product of elliptic curves) where the period-index conjecture is sharp, 
i.e. $\ind(\alpha) = \per(\alpha)^{\dim(X) -1}$. 

The case $e = \per(\alpha)^{\dim(X)-1}$ of Theorem~\ref{theorem-lower-bound} is particularly intriguing, as it gives a potential obstruction to the period-index conjecture. 
When $\per(\alpha)$ is sufficiently large relative to $\dim(X)$ this obstruction vanishes (Lemma~\ref{lemma-PI-obstruction-vanish}), but for $\per(\alpha)$ small relative to $\dim(X)$ it leads to mysterious identities among integral Hodge classes. 
These identities either hold universally or the period-index conjecture is false, but it is 
unclear what to expect; see \S\ref{section-obstruction-PI} for further discussion. 

Our main motivation for writing down Theorem~\ref{theorem-lower-bound} is that it gives an elementary perspective, in particular not involving the Hodge theory of categories, on the method of \cite{hotchkiss-pi}. 
However, we regard Hotchkiss's perspective via $\ind_{\Hdg}(\alpha)$ as more fundamental for two reasons. 
First, we prove in Lemma~\ref{lemma-obstruction-comparison} that if $\ind_{\Hdg}(\alpha)$ divides $e$, then Hodge classes as in the conclusion of Theorem~\ref{theorem-lower-bound} exist; thus, $\ind_{\Hdg}(\alpha)$ is at least as powerful an obstruction. 
Second, $\ind_{\Hdg}(\alpha)$ leads to a strategy for \emph{proving} the period-index conjecture in cases when it divides $\per(\alpha)^{\dim(X)-1}$, by using sheaf-theoretic methods to show the algebraicity of a Hodge class in $\Ktop[0](X, \alpha)$ of the correct rank; for abelian threefolds, this strategy is successfully carried out in \cite{hotchkiss-perry}. 

One advantage of our approach is that it extends directly to other base fields\footnote{It should also be possible to extend the approach of \cite{hotchkiss-pi} to other base fields, but this requires some more machinery.}, 
as we explain in \S\ref{section-lower-bound-k}. 
As an application, we construct counterexamples to the integral Tate conjecture which are topologically as simple as possible. 

\begin{theorem}
\label{theorem-ITC}
Let $p$ and $\ell$ be distinct primes. 
There exists 
a smooth projective variety $P$ of dimension $\ell^\ell + \ell$ over $\overline{\bF}_p$ with 
$\rH^*_{\et}(P, \bZ_{\ell})$ torsion-free on which the $\ell$-adic integral Tate conjecture in codimension $\ell^\ell-1$ fails. 
More precisely, there exists such a $P$ which is a Severi--Brauer variety of relative dimension $\ell^\ell-1$ over a product of $\ell+1$ pairwise non-isogenous elliptic curves. 
\end{theorem}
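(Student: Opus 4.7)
The plan is to realize $P$ as the Severi--Brauer scheme of an Azumaya algebra on a product of $\ell+1$ pairwise non-isogenous elliptic curves, whose Brauer class is as extreme as the period--index conjecture permits. The top power of the tautological degree-$2$ class on such a Severi--Brauer variety will then serve as a codimension-$(\ell^\ell-1)$ integral Tate class whose algebraicity is obstructed precisely by the sharpness of the period--index ratio.

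First, I would construct the pair $(A,\alpha)$. Let $A = E_1 \times \cdots \times E_{\ell+1}$ be a product of $\ell+1$ pairwise non-isogenous elliptic curves over $\overline{\bF}_p$, and invoke the $\ell$-adic, positive-characteristic analog of Proposition~\ref{proposition-index-av} (worked out in \S\ref{section-lower-bound-k}) to produce $\alpha \in \Br(A)$ with $\per(\alpha) = \ell$ and $\ind(\alpha) = \ell^\ell$. The upper bound $\ind(\alpha) \mid \ell^{\ell+1}$ follows from the general divisibility $\ind \mid \per^{\dim}$ for abelian varieties noted in the introduction; the essential content is the lower bound $\ind(\alpha) \nmid \ell^{\ell-1}$, obtained from the $\ell$-adic form of Theorem~\ref{theorem-lower-bound} applied at $e = \ell^{\ell-1}$ to an $\alpha$ whose associated $B$-field is built as an explicit cup product of classes in $\rH^1_{\et}(E_i, \bQ_\ell)$, so that the integrality obstruction \eqref{pBe} admits no solution among the Tate classes on $A$.

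Next, I would take an Azumaya algebra $\cA$ on $A$ with $[\cA] = \alpha$ and $\rk(\cA) = \ell^{2\ell}$, and let $\pi\colon P \to A$ be its Severi--Brauer scheme, of relative dimension $\ell^\ell - 1$, so that $\dim(P) = (\ell+1) + (\ell^\ell - 1) = \ell^\ell + \ell$. The Leray spectral sequence for $\pi$ in $\ell$-adic \'etale cohomology degenerates at the second page: the only potentially nonzero transgression $d_2 \xi$ lives in $\rH^3_{\et}(A, \bZ_\ell)$ and encodes the Brauer class, but this group is torsion-free as $A$ is a product of elliptic curves, so $d_2 \xi = 0$. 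Hence the projective bundle formula
\begin{equation*}
\rH^*_{\et}(P, \bZ_\ell) \cong \bigoplus_{i=0}^{\ell^\ell - 1} \rH^{*-2i}_{\et}(A, \bZ_\ell) \cdot \xi^i
\end{equation*}
holds for a canonical tautological class $\xi \in \rH^2_{\et}(P, \bZ_\ell)$, and the torsion-freeness of $\rH^*_{\et}(A, \bZ_\ell)$ passes to $\rH^*_{\et}(P, \bZ_\ell)$.

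Finally, $\xi^{\ell^\ell - 1}$ is a Tate class in codimension $\ell^\ell - 1$, and it is rationally algebraic because $\per(\alpha) = \ell$ implies that the line bundle $\cO_P(\ell)$ exists on $P$ with first Chern class $\ell \xi$. If $\xi^{\ell^\ell - 1}$ were represented by an integral cycle $\Gamma \in \CH^{\ell^\ell-1}(P)$, the projection formula would give $\pi_*[\Gamma] = \pi_* \xi^{\ell^\ell - 1} = [A]$, so $\Gamma$ would restrict to a $0$-cycle of degree $1$ on the generic fiber $P_{k(A)}$, contradicting the classical fact that the gcd of degrees of $0$-cycles on the Severi--Brauer variety of $\alpha|_{k(A)}$ equals $\ind(\alpha|_{k(A)}) = \ell^\ell$. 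The main obstacle is the first step---constructing the sharp period--index example over $\overline{\bF}_p$---for which one relies on the $\ell$-adic lower-bound machinery of \S\ref{section-lower-bound-k}; the remaining steps are formal consequences of standard results about Severi--Brauer varieties.
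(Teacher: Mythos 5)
Your overall strategy matches the paper's: construct a Brauer class $\alpha$ on a product $A$ of $\ell+1$ pairwise non-isogenous elliptic curves over $\overline{\bF}_p$ with $\per(\alpha) = \ell$ and $\ind(\alpha) = \ell^\ell$, and then find a non-algebraic integral Tate class on the Severi--Brauer scheme $P \to A$ of relative dimension $\ell^\ell - 1$. Your first two steps (Proposition~\ref{proposition-index-av-Fp} and the projective bundle formula of Lemma~\ref{lemma-cohomology-SB-k}) are essentially correct, modulo minor slips such as invoking the bound $\ind \mid \per^{\dim}$ (which would only give $\ell^{\ell+1}$, not $\ell^\ell$, though the explicit cyclic-algebra construction does give $\ell^\ell$) and misattributing the transgression to $d_2$ rather than $d_3$.

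The gap is in the final step: the class $\xi^{\ell^\ell - 1}$ you propose is not an integral Tate class, so its non-algebraicity is irrelevant to Conjecture~\ref{conjecture-ITC}. You equivocate between two possible meanings of $\xi$, and neither works. If $\xi = h$ is the integral class of Lemma~\ref{lemma-cohomology-SB-k}\eqref{k-top-h} (the one whose powers give the projective bundle decomposition), then $h^r = ((h + \pi^*B) - \pi^*B)^r$ expanded in the Tate-adapted basis of Remark~\ref{remark-coh-P} has, as its coefficient of $(h+\pi^*B)^{r-1}$, the class $-rB = -(\ell^\ell - 1) b/\ell$. Since $b = \sum x_i \cup y_{i+1}$ lies entirely in the non-Tate part of $\rH^2_{\et}(A, \bQ_\ell(1))$ (precisely because the curves are non-isogenous), this coefficient is not a Tate class, hence $h^r$ is not a Tate class. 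If instead $\xi = h + \pi^*B$ (the rationally algebraic class satisfying $\ell\xi = c_1(\cO^\alpha_P(1)^{\otimes \ell})$, which is what your remark about $\cO_P(\ell)$ implicitly uses), then $\xi^r$ is a Tate class but is not integral: its coefficient of $h^{r-1}$ is $rB = (\ell^\ell-1)b/\ell \notin \rH^2_{\et}(A, \bZ_\ell(1))$, since $b$ is not $\ell$-divisible. So no integral Tate class of fibral degree $1$ exists; in fact the paper's Lemma~\ref{lemma-weaker-bound-AV-Fp} shows the smallest fibral degree achievable by an integral Tate class is $n^{g-2} = \ell^{\ell-1}$, and the class it produces,
\begin{equation*}
\delta = \sum_{i=0}^{g-1} \pi^* \left( n^{g-2-i}\, r(r-1)\cdots(r-(i-1)) \frac{b^i}{i!} \right) h^{r-i},
\end{equation*}
is built precisely so that the integrality conditions $q_i^{B,r,e}(0,\dots,0)$ work out using $n \mid (g-1)!$. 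The contradiction then comes from $\ell^{\ell-1} < \ind(\alpha) = \ell^\ell$. Your remark that after step one ``the remaining steps are formal'' undersells the real content: identifying a class that is simultaneously integral, Tate, and of fibral degree strictly smaller than $\ind(\alpha)$ requires the $q_i$-obstruction analysis of \S\ref{section-index-via-0-cycles} and \S\ref{section-divisibility-obstructions-char-p}, not merely the existence of $\xi$.
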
 

See Theorem~\ref{theorem-ITC-precise} for a more precise version of Theorem~\ref{theorem-ITC}, 
and Conjecture~\ref{conjecture-ITC} for the formulation of the $\ell$-adic integral Tate conjecture. 
For $\ell > 5$, Theorem~\ref{theorem-ITC} gives the first counterexamples to this conjecture over $\overline{\bF}_p$ with a torsion-free cohomology ring; for $\ell \in \set{2, 3, 5}$ such counterexamples were only very recently constructed by Benoist \cite{benoist} via a completely different method. 
We note that the base of the Severi--Brauer variety $P$ above does satisfy the $\ell$-adic integral Tate conjecture (Lemma~\ref{lemma-generic-product-Fp}). 

Our proof of Theorem~\ref{theorem-ITC} is inspired by a counterexample to the integral Hodge conjecture on a Severi--Brauer variety from~\cite{hotchkiss-pi}. 
The idea is to construct a Brauer class $\alpha$ on $X$ with $\ind(\alpha)$ large, and then construct a Tate class on a Severi--Brauer variety $P \to X$ for $\alpha$ whose algebraicity would force $\ind(\alpha)$ to be small. 
Notably, for the first step, in Proposition~\ref{proposition-index-av-Fp} 
we use the $\ell$-adic version of Theorem~\ref{theorem-lower-bound} to construct Brauer classes $\alpha$ on products $X$ of pairwise non-isogenous elliptic curves over $\overline{\bF}_p$ with $\ind(\alpha) = \per(\alpha)^{\dim(X)-1}$. 

Finally, we note that in Theorem~\ref{theorem-IHC} we also prove an analog of Theorem~\ref{theorem-ITC} over the complex numbers for the integral Hodge conjecture. 

\subsection{Organization of the paper} 
In \S\ref{section-preliminaries} we gather some preliminary results about Brauer groups that are used throughout the paper. 
In \S\ref{section-upper-bound} we prove Theorem~\ref{theorem-pi-bound}, and in \S\ref{section-ub-other-fields} we prove an analogous result over  general base fields. 
In \S\ref{section-lower-bound} we prove Theorem~\ref{theorem-lower-bound} and derive the applications mentioned above. 
In \S\ref{section-lower-bound-k} we prove an analog of Theorem~\ref{theorem-lower-bound} over general base fields and discuss some of its applications, including the proof of Theorem~\ref{theorem-ITC}. 

\subsection{Conventions} 
\label{conventions} 
For an integral scheme $X$, we write $k(X)$ for its function field.
For a dominant morphism $f \colon X \to Y$ of integral schemes, we write $f_{k(Y)} \colon \Spec(k(X)) \to \Spec(k(Y))$ for the induced morphism on spectra of function fields. 

A variety $X$ over a field $k$ is an integral scheme which is separated and of finite type over~$k$. 
We write $\CH^i(X)$ for the Chow group of $X$ in degree $i$, and $\CH^i(X)_{\bQ} = \CH^i(X) \otimes \bQ$ for its rationalization. 

If $X$ is a complex variety, we write $\rH^*(X, A)$ for the singular cohomology with coefficients in $A$ of the analytification $X^{\an}$. 
If $X$ is a smooth proper complex variety, we write $\rH^{i,i}(X, \bZ)$ and $\rH^{i,i}(X, \bQ)$ for the integral and rational Hodge classes in degree $2i$. 
We say that a class in $\rH^{2i}(X, \bZ)$ is \emph{algebraic} if it lies in the image of the cycle class map $\CH^{i}(X) \to \rH^{2i}(X, \bZ)$. 

If $X$ is a smooth proper variety over the algebraic closure $k$ of a finitely generated field and $\ell \neq \characteristic(k)$ is a prime, 
we write $\rH^{2i}_{\et}(X, \bZ_{\ell}(i))^{\Tate}$ and $\rH^{2i}_{\et}(X, \bQ_{\ell}(i))^{\Tate}$ for the $\ell$-adic integral and rational Tate classes in degree $2i$. 
We say that a class in $\rH_{\et}^{2i}(X, \bZ_{\ell}(i))$ is \emph{algebraic} if it lies in the image of the cycle class map $\CH^{i}(X) \otimes \bZ_{\ell} \to \rH_{\et}^{2i}(X, \bZ_{\ell}(i))$. 

\subsection{Acknowledgements} 
We heartily thank James Hotchkiss for inspiring conversations related to this work. 
We also benefited from helpful discussions with Will Sawin and Stefan Schreieder. 


\section{Preliminaries} 
\label{section-preliminaries} 

The purpose of this section is to fix notation and gather preliminaries for the rest of the paper. 

\subsection{Brauer groups} 
We recall some basic results about Brauer groups, referring the reader to one of the many sources on this material, like  \cite{grothendieck-brauer,  CT-brauer}, for more details. 

Let $X$ be a scheme. 
The \emph{Brauer--Azumaya group} $\Br_{\Az}(X)$ is defined as the group of Azumaya algebras on $X$ modulo Morita equivalence. 
This is a torsion group if $X$ is quasi-compact. 
Moreover, there is a natural injection $\Br_{\Az}(X) \hookrightarrow \rH^2_{\et}(X, \bG_m)$, 
which in the quasi-compact case factors through the torsion subgroup $\Br(X) \coloneqq \rH^2_{\et}(X, \bG_m)_{\tors}$ known as the \emph{cohomological Brauer group}\footnote{Sometimes in the literature this group is instead denoted $\Br'(X)$, while $\Br_{\Az}(X)$ is denoted $\Br(X)$.}. 
When $X$ admits an ample line bundle, the map $\Br_{\Az}(X) \to \Br(X)$ is an isomorphism by a theorem of Gabber \cite{dj-gabber}. 
If $X$ is integral, regular, and noetherian,  restriction to the generic point gives an injection $\rH^2_{\et}(X, \bG_m) \hookrightarrow \rH^2_{\et}(k(X), \bG_m)$; in particular, $\Br(X) = \rH^2_{\et}(X, \bG_m)$ in this case. 
Summarizing, if $X$ is a smooth quasi-projective variety then the three possible notions of the Brauer group, 
$\Br_{\Az}(X)$, $\Br(X)$, and $\rH^2_{\et}(X, \bG_m)$, all coincide and inject into $\Br(k(X))$. 

Next we discuss functoriality. 
Let $f \colon X \to Y$ be a morphism of schemes. 
Then there is a pullback map $f^* \colon \Br(Y) \to \Br(X)$ (sometimes also called restriction) induced by pullback on \'{e}tale cohomology. 
For $\alpha \in \Br(Y)$ we sometimes write $\alpha\vert_X$, or simply $\alpha \vert_A$ when $X = \Spec(A)$ is affine, for $f^*(\alpha)$. 

When $f \colon X \to Y$ is finite locally free of constant rank, there is also a pushforward morphism $f_* \colon \Br(X) \to \Br(Y)$ (sometimes also called corestriction), which is compatible with composition and base change and such that $f_*f^*$ is multiplication by $\deg(f)$ on $\Br(X)$. 
Let us record a  description of the opposite composition, $f^*f_*$, for field extensions, which follows from standard Galois cohomology (see e.g. \cite{GS}). 

\begin{lemma}
\label{lemma-push-pull} 
Let $M/L/K$ be field extensions of finite degree such that $M/K$ is Galois, and let 
$\Spec(M) \xrightarrow{g} \Spec(L) \xrightarrow{f} \Spec(K)$ be the corresponding maps. 
Let 
$\sigma_1, \dots, \sigma_d \in \Gal(M/K)$ be a choice of representatives for the left cosets of  $\Gal(M/L)$ in $\Gal(M/K)$. 
Then for $\alpha \in \Br(L)$ we have 
\begin{equation*}
    g^*f^*f_*(\alpha) = \sum_{i} \sigma_i^*g^*(\alpha). 
\end{equation*}
\end{lemma}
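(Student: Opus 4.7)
The plan is to translate the statement into continuous Galois cohomology and apply Mackey's double-coset formula, after which the hypotheses on $M$ collapse everything cleanly. Fix a separable closure $K_s$ of $K$ containing $M$, and for an intermediate field $F$ set $G_F = \Gal(K_s/F)$. Under the standard identifications $\Br(F) = \rH^2(G_F, K_s^\times)$ (continuous cohomology), the pullbacks $f^*$ and $g^*$ correspond to restriction to the relevant subgroups, while the pushforward $f_*$ along the finite extension $L/K$ corresponds to the corestriction map $\Cores^{G_K}_{G_L}$; both identifications are classical (see e.g.\ \cite{GS}). In this language the left-hand side of the claimed identity is $\Res^{G_K}_{G_M} \circ \Cores^{G_K}_{G_L}(\alpha)$.

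Mackey's formula (valid for finite-index open subgroups of profinite groups) then reads
\[
\Res^{G_K}_{G_M}\, \Cores^{G_K}_{G_L}(\alpha) = \sum_{g} \Cores^{G_M}_{G_M \cap gG_Lg^{-1}} \circ c_g \circ \Res^{G_L}_{g^{-1}G_Mg \cap G_L}(\alpha),
\]
where $g$ ranges over representatives for the double cosets $G_M \backslash G_K / G_L$ and $c_g$ denotes the action of conjugation by $g$. Two hypotheses collapse this sum. First, since $M/K$ is Galois we have $G_M \trianglelefteq G_K$, and since $M \supset L$ we have $G_M \subset G_L$; together these give $G_M = gG_Mg^{-1} \subset gG_Lg^{-1}$ for every $g$, so $G_M \cap gG_Lg^{-1} = G_M$ and $g^{-1}G_Mg \cap G_L = G_M$, making both the corestriction and the inner restriction in each term the identity. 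Second, the same inclusions force $G_M \cdot gG_L = gG_L$, identifying the double-coset space $G_M \backslash G_K / G_L$ with $G_K/G_L$; representatives may therefore be taken to be lifts of the $\sigma_i \in \Gal(M/K) = G_K/G_M$ specified in the lemma.

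After these simplifications the formula reduces to $\sum_i c_{\sigma_i} \circ \Res^{G_L}_{G_M}(\alpha)$. Under $\rH^2(G_M, K_s^\times) = \Br(M)$, the remaining restriction $\Res^{G_L}_{G_M}$ is precisely the pullback $g^*$, and the action $c_{\sigma_i}$ agrees with the functorial action $\sigma_i^*\colon \Br(M) \to \Br(M)$ induced by the $K$-automorphism $\sigma_i$ of $M$; this latter compatibility is the standard matching of conjugation on group cohomology with the $\Gal(M/K)$-action on $\Br(M)$. The resulting sum is $\sum_i \sigma_i^* g^*(\alpha)$, as desired. The argument is essentially forced once the dictionary is in place, so the only real obstacle is bookkeeping—specifically, pinning down the identification of $f_*$ with corestriction and matching $c_g$ with $\sigma^*$—after which Mackey's formula does all the work.
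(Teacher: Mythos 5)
The paper gives no proof of this lemma, deferring to ``standard Galois cohomology'' with a citation to \cite{GS}. Your argument is correct and is exactly the standard derivation that citation alludes to: identify $\Br(F)$ with $\rH^2(\Gal(K_s/F),K_s^\times)$, translate $f_*$ to corestriction and $f^*,g^*$ to restriction, and apply Mackey's double-coset formula; normality of $G_M$ in $G_K$ together with $G_M\subset G_L$ collapses every intersection to $G_M$ and every double coset $G_M g G_L$ to the single left coset $gG_L$, leaving $\sum_i c_{\sigma_i}\circ\Res^{G_L}_{G_M}$. The two pieces of bookkeeping you flag---that each term is independent of the lift of $\sigma_i$ to $G_K$, and that $c_g$ on $\rH^2(G_M,K_s^\times)$ matches the functorial $\sigma_i^*$ on $\Br(M)$---are indeed the only things to check; the first holds because conjugation by an element of $G_M$ acts trivially on $\rH^*(G_M,-)$, and the second is the usual compatibility of conjugation in group cohomology with the Galois action. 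No gap.
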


In \S\ref{section-correspondences-Br} below we will discuss functoriality with respect to appropriate correspondences, obtained by combining pullback and pushforward, after recalling the corresponding story for cohomology in \S\ref{section-correspondences}. 

\subsection{Action of correspondences on cohomology} 
\label{section-correspondences} 
Let $X$ and $Y$ be smooth 
varieties over an algebraically closed field $k$, with $Y$ proper. 
Then correspondences between $X$ and $Y$ act on cohomology.
For instance, if $k = \bC$ and $R$ is a ring, then there is a bilinear pairing 
\begin{equation*}
    \CH^c(X \times Y)  \times  \rH^i(Y, R)  \to  \rH^{i+2c - 2\dim(Y)}(X, R),  \quad 
    (\Gamma ,  a) \mapsto \Gamma^*(a), 
\end{equation*}
defined by the formula 
\begin{equation*}
    \Gamma^*(a) = \pr_{X*}(\pr_Y^*(a) \cup \cl(\Gamma)) 
\end{equation*}
where $\cl(\Gamma) \in \rH^{2c}(X \times Y, R)$ is the cohomology class of $\Gamma$, and $\pr_{X} \colon X \times Y \to X$ and $\pr_{Y} \colon X \times Y \to Y$ are the projections. (Note that $\pr_X$ is proper by our assumption that $Y$ is proper.)  
If instead {$\Gamma \in \CH^c(X \times Y)_{\bQ}$} is only a rational correspondence, then it acts on cohomology with coefficients in any $\bQ$-algebra $R$. 
For a general field $k$, we may instead use \'{e}tale cohomology and define analogous pairings. 

\begin{remark}
\label{remark-correspondence-H}
Let us note two standard properties of the above action: 
\begin{enumerate}
\item If $f \colon X \to Y$ is a morphism and $\Gamma_f \subset X \times Y$ is its graph, then $[\Gamma_f]^* = f^*$ is the pullback on cohomology. 
If $X$ is proper and $\Gamma_f^t \subset Y \times X$ denotes the transpose of $\Gamma_f$, i.e. its image under the isomorphism $X \times Y \cong X \times Y$ swapping factors, then $[\Gamma_f^t]^* = f_*$ is the pushforward on cohomology. 

\item Let $Z$ be a smooth variety over $k$. 
If $\Gamma_1 \in \CH^*(X \times Y)$ and $\Gamma_2 \in \CH^*(Y \times Z)$, then there is a composite cycle 
\begin{equation*}
    \Gamma_2 \circ \Gamma_1 \coloneqq \pr_{XZ*}( \pr_{YZ}^*(\Gamma_2) \cdot \pr_{XY}^*(\Gamma_1)) \in \CH^*(X \times Z), 
\end{equation*}
where $\pr_{XY}$, $\pr_{YZ}$, and $\pr_{XZ}$ are the projections from $X \times Y \times Z$ onto two of the factors. (Note that $\pr_{XZ}$ is proper by our assumption that $Y$ is proper.) 
If both $Y$ and $Z$ are proper, then we have the associated pullback maps $\Gamma_1^* \colon \colon \rH^*(Y, R) \to \rH^*(X, R)$ and $\Gamma_2^* \colon \rH^*(Z, R) \to \rH^*(Y, R)$, which satisfy 
\begin{equation*}
    (\Gamma_2 \circ \Gamma_1)^* = \Gamma_1^* \circ \Gamma_2^*. 
\end{equation*}
\end{enumerate} 
\end{remark}

\subsection{Action of correspondences on Brauer groups} 
\label{section-correspondences-Br}
There is also a natural action of correspondences on Brauer groups. 
To begin, let $X$ and $Y$ be  varieties over an algebraically closed field $k$, 
with no smoothness of properness assumptions. 
Suppose $Z \subset X \times Y$ is a closed subvariety of dimension $\dim(X)$, and  
let $p \colon Z \to X$ and $q \colon Z \to Y$ denote the projections. 
We define a map 
\begin{equation*}
    Z^* \colon \Br(Y) \to \Br(k(X))
\end{equation*}
as follows:  
\begin{itemize}
    \item If $p \colon Z \to X$ is not dominant, then $Z^*(\alpha) \coloneqq 0$.
    \item If $p \colon Z \to X$ is dominant, then 
    $Z^*(\alpha) \coloneqq p_{k(X)*}(q^*(\alpha)\vert_{k(Z)})$ where we denote by $p_{k(X)} \colon \Spec(k(Z)) \to \Spec(k(X))$ the induced finite morphism. 
\end{itemize}
Extending by linearity, we obtain a bilinear pairing 
\begin{equation}
\label{equation-pairing-Br} 
    \rZ^{\dim(Y)}(X \times Y) \times \Br(Y) \to \Br(k(X)) , \quad (\Gamma, \alpha) \mapsto \Gamma^*(\alpha), 
\end{equation}
where $\rZ^{\dim(Y)}(X \times Y)$ denotes the group of codimension $\dim(Y)$ cycles on $X \times Y$. 

\begin{remark}
\label{remark-Z-surface}
If $Y$ above is a surface, 
then $Z^* \colon \Br(Y) \to \Br(k(X))$ vanishes when $q \colon Z \to Y$ is not dominant. Indeed, in this case $q$ factors via a dominant map onto $C \subset Y$ which is either a point or a curve, and therefore  $q^*(\alpha)\vert_{k(Z)}$ vanishes for any $\alpha \in \Br(Y)$ as it is the pullback of an element in $\Br(k(C)) = 0$. 
\end{remark}

\begin{remark}
If $f \colon X \to Y$ is a morphism, then $\Gamma_f^* \colon \Br(Y) \to \Br(k(X))$ is the composition of $f^* \colon \Br(Y) \to \Br(X)$ with restriction to the generic point of $X$. 
\end{remark}

The pairing~\eqref{equation-pairing-Br} descends to the level of Chow groups when restricted to unramified classes in $\Br(k(Y))$ of period prime to $\characteristic(k)$, 
and it is compatible with the action of correspondences on cohomology in the following sense. 
For any integer $n$ invertible in $k$, the Kummer sequence gives an exact sequence 
\begin{equation}
\label{kummer-seq}
    0 \to \Pic(X)/n \to \rH^2_{\et}(X, \bmu_n) \to \Br(X)[n] \to 0.
\end{equation}
In particular, when $X$ is smooth the natural map $\rH^2_{\et}(X, \bmu_n) \to \Br(k(X))$ has image $\Br(X)[n]$, 
and similarly for~$Y$. 

\begin{lemma}
\label{lemma-pairing-Br}
Let $X$ and $Y$ be smooth varieties over an algebraically closed field $k$, with $Y$ proper. Let $n$ be an integer invertible in $k$. 
Then~\eqref{equation-pairing-Br} induces a pairing  
\begin{equation*}
    \CH^{\dim(Y)}(X \times Y) \times \Br(Y)[n] \to \Br(X)[n], \quad (\Gamma, \alpha) \mapsto \Gamma^*(\alpha), 
\end{equation*}
such that the diagram 
\begin{equation*}
\xymatrix{
\CH^{\dim(Y)}(X \times Y) \times \rH_{\et}^2(Y, \bmu_n) \ar[r] \ar@{->>}[d] & \rH_{\et}^2(X, \bmu_n) \ar@{->>}[d] \\ 
   \CH^{\dim(Y)}(X \times Y) \times \Br(Y)[n] \ar[r] &  \Br(X)[n]
}
\end{equation*}
commutes, where the top horizontal arrow is given by the action of correspondences on cohomology. 
\end{lemma}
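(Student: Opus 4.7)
The plan is to use the Kummer sequence \eqref{kummer-seq} to reduce the lemma to the standard theory of correspondence actions on étale cohomology reviewed in \S\ref{section-correspondences}. Given $\Gamma \in \CH^{\dim(Y)}(X \times Y)$ and $\alpha \in \Br(Y)[n]$, I would lift $\alpha$ to a class $\tilde\alpha \in \rH^2_\et(Y, \bmu_n)$, apply the cohomological correspondence action $\Gamma^* \colon \rH^2_\et(Y, \bmu_n) \to \rH^2_\et(X, \bmu_n)$, and project to $\Br(X)[n]$. The result is independent of the chosen lift, because any two lifts differ by an element of $\Pic(Y)/n$, and the correspondence action commutes with cycle class maps, so $\Gamma^*$ sends $\Pic(Y)/n$ into $\Pic(X)/n$. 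By construction this produces a pairing landing in $\Br(X)[n]$ together with the commutative diagram claimed in the statement.

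It remains to identify this pairing with \eqref{equation-pairing-Br}. By bilinearity I may fix a prime cycle $Z \subset X \times Y$ of dimension $\dim(X)$, and since the target of \eqref{equation-pairing-Br} sits inside $\Br(k(X))$, I compare the images in $\Br(k(X))$, i.e.\ after restricting $Z^*(\tilde\alpha)$ to the generic point $\eta = \Spec k(X)$. Proper base change along the proper projection $\pr_X \colon X \times Y \to X$ identifies $Z^*(\tilde\alpha)\vert_\eta$ with the pushforward along $Y_\eta \to \eta$ of $\tilde\alpha\vert_{Y_\eta} \cup [Z_\eta]$, where $Z_\eta = Z \times_X \eta$. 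If $p \colon Z \to X$ is not dominant, then $Z_\eta$ is empty, so the restriction vanishes, matching the convention in \eqref{equation-pairing-Br}. If $p$ is dominant, then $p$ is generically finite and $Z_\eta = \Spec k(Z)$ as a finite $k(X)$-scheme; by the projection formula the formula collapses to pullback of $\tilde\alpha$ along $\Spec k(Z) \to Y_\eta \to Y$ followed by the étale-cohomological pushforward along the finite map $p_{k(X)} \colon \Spec k(Z) \to \eta$. Under the Kummer surjections this is precisely $p_{k(X)*}(q^*(\alpha)\vert_{k(Z)})$, as desired.

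The main obstacle is the final Kummer-sequence identification: one must know that the finite-morphism pushforward on $\rH^2_\et(-,\bmu_n)$ intertwines the Kummer surjections $\rH^2_\et(-,\bmu_n) \twoheadrightarrow \Br(-)[n]$ with the usual corestriction on Brauer groups. This is a standard compatibility in Galois cohomology, as both operations are induced by the same corestriction on $\rH^2(\Gal(\bar K/K), -)$ for the finite separable extension $k(Z)/k(X)$, under which the Kummer boundary map is equivariant. Once this is granted, the two candidates for $Z^*(\alpha)$ agree over $\eta$ and thus define the same element of $\Br(X)[n] \subset \Br(k(X))$.
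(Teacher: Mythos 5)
Your proof is correct and follows essentially the same strategy as the paper's: reduce by bilinearity to a single prime cycle $Z \subset X \times Y$, restrict to the generic point of $X$, and invoke the compatibility of the Kummer surjection with pushforward/corestriction for finite extensions of fields. Where you diverge is in how you arrive at the generic-point computation. The paper first base changes to an open subset $U \subset X$ over which $Z$ becomes smooth, so that the cohomological correspondence can be written explicitly as $Z^*(a) = p_*q^*(a)$, and then checks that $p_*$ commutes with restriction to generic points of $U$ and $Z$. You instead apply proper base change along $\pr_X$ directly to $\eta = \Spec k(X)$ and then use the projection formula $\pi_{Y_\eta *}(\tilde\alpha|_{Y_\eta} \cup \cl(Z_\eta)) = p_{k(X)*}(\tilde\alpha|_{k(Z)})$ for the zero-dimensional regular closed subscheme $Z_\eta \subset Y_\eta$. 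The two routes are essentially equivalent (proper base change and commutation of $p_*$ with generic-point restriction are really the same fact here, and the $p_*q^*$ formula is itself a form of the projection formula), so this is best viewed as a stylistic reorganization rather than a genuinely different argument. One small observation: the independence-of-lift argument you give up front is logically redundant, since it follows from the commutativity of the diagram together with the surjectivity of the vertical Kummer maps; this is why the paper does not check well-definedness separately.
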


\begin{proof}
It suffices to show that for $Z \subset X \times Y$ a closed subvariety, the diagram 
\begin{equation*}
\xymatrix{
    \rH^2_{\et}(Y, \bmu_n) \ar[d] \ar[r]^{Z^*} & \rH^2_{\et}(X, \bmu_n) \ar[d] \\ 
    \Br(k(Y)) \ar[r]^{Z^*} & \Br(k(X)) 
    }
\end{equation*}
commutes. 
When $Z$ does not dominate $X$ it is clear that both compositions vanish, so we may assume $p \colon Z \to X$ is dominant. 
After base changing to the open subset $U \subset X$ given by the complement of the image of the singular locus of $Z$, we may assume that $Z$ is smooth. 
Then for $a \in \rH^2_{\et}(Y, \bmu_n)$ we have $Z^*(a) = p_*q^*(a)$, and the claim follows from the commutativity of each square in the diagram 
\begin{equation*}
\xymatrix{
    \rH^2_{\et}(Y, \bmu_n) \ar[rr]^{q^*} \ar@{=}[d] && \rH^2_{\et}(Z, \bmu_n) \ar[rr]^{p_*} \ar[d]^{(-)\vert_{k(Z)}} & & \rH^2_{\et}(X, \bmu_n) \ar[d]^{(-)\vert_{k(X)}} \\ 
    \rH^2_{\et}(Y, \bmu_n) \ar[rr]^-{q^*(-) \vert_{k(Z)}} & & \rH^2_{\et}(\Spec(k(Z)), \bmu_n) \ar[rr]^-{p_{k(X)*}} & & \rH^2_{\et}(\Spec(k(X)), \bmu_n) 
}
\end{equation*}
\end{proof} 

\subsection{The index} 
There are many different characterizations of the index of a Brauer class. Lemma~\ref{lemma-index-defs} below summarizes some of them relevant to this paper. 
The \emph{degree} of a central simple algebra $A$ over a field $K$ is the integer $\deg(A) = \sqrt{\dim_K(A)}$. 
We say that a field extension $L/K$ \emph{splits} a class $\alpha \in \Br(K)$ if $\alpha \vert_L = 0$. 
 
\begin{lemma}
\label{lemma-index-defs}
Let $K$ be a field and $\alpha \in \Br(K)$. 
Then the index $\ind(\alpha)$ is computed by any of the following: 
\begin{enumerate}
    \item \label{index-def}
    $\deg(D)$ where $D$ is the unique division algebra of class $\alpha$, 
    \item 
    \label{index-min-splitting}
    $\min \set{[L:K] \st L/K \text{ is a finite separable field extension which splits } \alpha}$, or 
    \item 
    \label{index-gcd-splitting}
    $\gcd \set{ [L: K] \st L/K \text{ is a finite separable field extension which splits } \alpha}$. 
\end{enumerate}
Moreover, if $\per(\alpha)$ is invertible in $K$, then in~\eqref{index-min-splitting} and~\eqref{index-gcd-splitting} the word ``separable'' may be omitted. 
\end{lemma}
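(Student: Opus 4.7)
The plan is to treat (1) as the definition of the index and reduce everything else to two ingredients: (a) existence of a separable splitting extension of degree $\deg(D) = \ind(\alpha)$, and (b) the divisibility $\ind(\alpha) \mid [L:K] \cdot \ind(\alpha|_L)$ for every finite field extension $L/K$. Point (a) I would extract from K\"othe's theorem, which provides a maximal separable subfield $L \subset D$; since $L$ is then maximal commutative of degree $\deg(D)$, the double centralizer theorem gives $D \otimes_K L \cong \End_L(D)$, so $L$ splits $\alpha$.

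The main technical step is (b). I would write $D \otimes_K L \cong M_r(D')$, where $D'$ is the division algebra over $L$ representing $\alpha|_L$, and consider a simple right $M_r(D')$-module $V$, of $L$-dimension $r \cdot \deg(D')^2$. The embedding $D \hookrightarrow D \otimes_K L$ endows $V$ with a left $D$-module structure, which is automatically free since $D$ is a division algebra. Comparing $K$-dimensions forces $[L:K]/r$ to be a positive integer, and since $r = \deg(D)/\deg(D') = \ind(\alpha)/\ind(\alpha|_L)$, this is exactly the claimed divisibility.

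Granting (a) and (b), the equalities $(1) = (2) = (3)$ would follow quickly: applied to any separable splitting extension, (b) gives $\ind(\alpha) \mid [L:K]$, so $\ind(\alpha)$ divides the minimum in (2) and every element of the set defining (3), and thus their gcd; combined with the upper bound from (a) and $\gcd \leq \min$, all three quantities collapse to $\ind(\alpha)$. For the final clause, suppose $\per(\alpha)$ is invertible in $K$ of characteristic $p$ and $L/K$ is a finite (possibly inseparable) splitting extension; let $L^s \subset L$ denote the separable closure of $K$ in $L$. Then $L/L^s$ is purely inseparable of $p$-power degree, so (b) applied to $\alpha|_{L^s}$ forces $\ind(\alpha|_{L^s})$ to be a power of $p$; since this index has the same prime factors as $\per(\alpha|_{L^s}) \mid \per(\alpha)$, which is coprime to $p$, we conclude $\ind(\alpha|_{L^s}) = 1$. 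Thus $L^s$ already splits $\alpha$ and $[L^s:K] \leq [L:K]$ shows the min and gcd are unchanged when ``separable'' is dropped. The only step with genuine content is the free-module computation in (b); the rest is organizational, and I do not foresee a real obstacle.
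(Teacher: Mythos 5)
Your proof is correct, and it is more self-contained than the paper's: where the paper simply cites \cite[Proposition 4.5.8 and Corollary 4.5.9]{GS} for the equivalence of the three descriptions and then uses a short corestriction-restriction argument for the last clause, you rebuild everything from two lemmas. Your item (a) (K\"othe's theorem on maximal separable subfields, plus the double centralizer theorem) produces a separable splitting field of degree exactly $\ind(\alpha)$, and your item (b) (the divisibility $\ind(\alpha) \mid [L:K]\cdot \ind(\alpha|_L)$, proved by comparing $K$-dimensions of a simple $M_r(D')$-module against freeness over $D$) supplies the lower bound on every splitting degree; together they force $(1)=(2)=(3)$ and, combined with the observation that $[L:L^s]$ is a $p$-power while $\ind(\alpha|_{L^s})$ is coprime to $p$, they give the separability reduction. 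The paper's handling of that last step is slightly different in flavor --- it notes that $\mathrm{res}\colon \Br(E)[n] \to \Br(L)[n]$ is injective for $n$ coprime to $[L:E]$ because $\mathrm{cor}\circ\mathrm{res}$ is multiplication by $[L:E]$, so the vanishing of $\alpha|_L$ propagates down to $\alpha|_E$ directly --- but both reductions hinge on exactly the same coprimality between $\per(\alpha)$ and $p$. One cosmetic remark: in your proof of (b), the ring map $D \to D\otimes_K L \cong M_r(D')$ makes the simple \emph{right} $M_r(D')$-module $V$ into a \emph{right} $D$-module rather than a left one; this does not affect the argument, since modules over a division ring are free in either variance.
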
 

\begin{proof}
This is standard. 
For the equivalence of \eqref{index-def}-\eqref{index-gcd-splitting}, see for example \cite[Proposition 4.5.8 and Corollary 4.5.9]{GS}. 
If $L/K$ is a finite field extension which splits $\alpha$, then consider the separable closure $K \subset E \subset L$ of $K$ in $L$. 
Note that the restriction map  $\Br(E)[n] \to \Br(L)[n]$ on $n$-torsion is injective for $n$ coprime to $[L : E]$, as its composition with pushforward along $L/E$ is multiplication by $[L : E]$. 
It follows that if $\per(\alpha)$ is invertible in $K$ then the separable extension $E/K$ splits $\alpha$. This implies the last statement of the lemma. 
\end{proof}

For an integral scheme $X$ and $\alpha \in \Br(X)$, we define the \emph{index} of $\alpha$ as 
$\ind(\alpha) \coloneqq \ind(\alpha \vert_{k(X)})$, where $\alpha \vert_{k(X)} \in \Br(k(X))$ is the restriction to the generic point. 
This number can also be computed in terms of global objects on $X$, namely twisted sheaves. 
There are various ways to define twisted sheaves for a class $\alpha \in \rH^2_{\et}(X, \bG_m)$ on a scheme $X$, but they all depend on some auxiliary choice: 
\begin{enumerate}
    \item One can choose a $\bG_m$-gerbe $\cX \to X$ of class $\alpha$, and then consider sheaves on $\cX$ for which the inertial action is given by the standard character \cite{lieblich-moduli-twisted}. 
    \item One can choose a (hyper)covering and a cocycle $a$ representing $\alpha$, and then consider sheaves on the cover satsifying an $a$-twisted cocycle condition \cite{caldararu}. 
    \item When $\alpha$ is represented by an Azumaya algebra $\cA$, one can consider sheaves of $\cA$-modules. 
\end{enumerate} 
All of these approaches lead to equivalent theories (see \cite[\S2.1.3]{lieblich-moduli-twisted}), but each has its advantages. 
For concreteness, in the next lemma we adopt the first approach; following \cite{lieblich-moduli-twisted}, to which we refer for more details on this topic, we call an $\alpha$-twisted sheaf in this sense an \emph{$\cX$-twisted sheaf}. 

\begin{lemma}
\label{lemma-index-unramified}
Let $X$ be an integral noetherian scheme, let $\alpha \in \Br(X)$, and let $\cX \to X$ be a $\bG_m$-gerbe of class $\alpha$.  Then $\ind(\alpha)$ is computed by either of the following: 
\begin{enumerate}
    \item $\min \set{ \rk(F) \st F \text{ is a coherent $\cX$-twisted sheaf of positive rank}}$, or 
    \item $\gcd \set{ \rk(F) \st F \text{ is a coherent $\cX$-twisted sheaf of positive rank}}$.  
\end{enumerate}
\end{lemma}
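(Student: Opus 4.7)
The plan is to establish the lemma by proving two claims: first, that $\ind(\alpha)$ divides the rank of every coherent $\cX$-twisted sheaf of positive rank; and second, that there exists a coherent $\cX$-twisted sheaf of rank exactly $\ind(\alpha)$. Together these show that $\ind(\alpha)$ is both the minimum and the greatest common divisor of positive ranks.

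For the first claim, I would restrict to the generic point $\eta = \Spec k(X)$ and use the equivalence between coherent $\cX_\eta$-twisted sheaves and finitely generated right modules over a central simple $k(X)$-algebra $A$ of class $\alpha|_\eta$, under which a twisted vector space of rank $r$ corresponds to a right $A$-module of $k(X)$-dimension $r \cdot \deg(A)$. Writing $A \cong M_{n/d}(D)$ with $D$ the division algebra of class $\alpha|_\eta$ and $d = \deg(D) = \ind(\alpha|_\eta)$, every right $A$-module is a direct sum of copies of the simple right $A$-module, whose $k(X)$-dimension equals $d \cdot \deg(A)$. Hence the rank of the generic restriction $F|_{\cX_\eta}$, which equals $\rk(F)$, is divisible by $d = \ind(\alpha)$.

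For the second claim, Lemma~\ref{lemma-index-defs}\eqref{index-min-splitting} supplies a finite separable extension $L/k(X)$ of degree $d := \ind(\alpha)$ which splits $\alpha|_\eta$. I would spread $L$ out to a finite étale cover $\pi \colon W \to U$ of degree $d$ over some dense open $U \subseteq X$, with $W$ integral and $k(W) = L$. Since $\alpha|_W$ vanishes at the generic point of $W$ and $\rH^2_{\et}(-, \bG_m)$ at the generic point is the filtered colimit over dense open subsets, after further shrinking $W$ and $U$ we may arrange $\alpha|_W = 0$. Then the gerbe $\cX \times_X W \to W$ is trivial and so admits an invertible twisted sheaf $M$, whose pushforward $\pi_* M$ is a coherent $(\cX \times_X U)$-twisted sheaf of generic rank $d$; extending by coherent extension on the noetherian stack $\cX$ produces a coherent $\cX$-twisted sheaf of rank $d$. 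The main obstacle is the shrinking step and the coherent extension step, both of which are routine given the noetherian hypothesis on $X$; the essential content of the argument is the equivalence between twisted sheaves and modules over central simple algebras at the generic point, combined with the standard translation between splitting fields and finite étale covers carrying twisted line bundles.
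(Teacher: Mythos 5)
Your proof is correct, and it follows the same two-step strategy the paper uses, just unpacking the two results the paper cites from Lieblich rather than invoking them: the paper's citation of \cite[Proposition 3.1.2.1]{lieblich-period-index} is exactly your Morita-theoretic analysis at the generic point (which yields both the divisibility statement and the existence of a rank-$\ind(\alpha)$ twisted sheaf over $k(X)$ via a splitting field), and the citation of \cite[Lemma 3.1.3.1]{lieblich-period-index} is the coherent-extension step you carry out by hand after spreading out over a dense open. The only cosmetic difference is that you push forward along a spread-out \'etale cover of an open $U \subseteq X$ and then extend, rather than first producing the sheaf over $\Spec k(X)$ and then lifting, but the content is identical.
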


\begin{proof}
For $X = \Spec(K)$ the spectrum of a field, this is \cite[Proposition 3.1.2.1]{lieblich-period-index}. 
The general case follows from this, as any $\cX_{k(X)}$-twisted coherent sheaf over the generic point can be lifted to a coherent $\cX$-twisted  sheaf by \cite[Lemma 3.1.3.1]{lieblich-period-index}. 
\end{proof}

\subsection{Period-index bounds} 
In the rest of this section, we discuss some known results about the period-index problem. 
The first is a simple observation that says the problem reduces to the case of prime power period. 
For an abelian group $G$ and a prime $p$, we write $G[p^\infty]$ for its $p$-primary part, i.e. the subgroup of $p$-power torsion elements. 

\begin{lemma}
\label{lemma-index-bound-divisible} 
Let $G \subset \Br(K)$ be a subgroup of the Brauer group of a field $K$. 
Suppose that for every prime $p$ and $\alpha \in G[p^{\infty}]$, we have 
$\ind(\alpha) \leq \per(\alpha)^e$. 
Then $\ind(\alpha)$ divides $\per(\alpha)^e$ for all $\alpha \in G$. 
\end{lemma}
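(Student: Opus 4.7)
The plan is to reduce the general case to the prime-power case via the primary decomposition of finite abelian groups combined with the multiplicativity of the index of Brauer classes of coprime period.

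First, I would reduce to $\alpha$ of finite order. For any $\alpha \in G$, $\per(\alpha)$ is finite (since $\Br(K)$ is torsion), so write $n = \per(\alpha) = \prod_{i=1}^r p_i^{a_i}$ with distinct primes $p_i$. Setting $n_i = n/p_i^{a_i}$, the fact that the $n_i$'s are setwise coprime produces integers $c_i \in \bZ$ with $\sum_i c_i n_i = 1$, so writing $\alpha_i = c_i n_i \alpha$ gives a decomposition
\begin{equation*}
\alpha = \sum_{i=1}^r \alpha_i, \qquad \alpha_i \in G[p_i^\infty], \qquad \per(\alpha_i) = p_i^{a_i}.
\end{equation*}

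The key step is the multiplicativity statement
\begin{equation*}
\ind(\alpha) = \prod_{i=1}^r \ind(\alpha_i),
\end{equation*}
which I would prove by induction from the case $r = 2$. For two classes $\beta_1, \beta_2 \in \Br(K)$ with $\gcd(\per(\beta_1), \per(\beta_2)) = 1$ (equivalently $\gcd(\ind(\beta_1), \ind(\beta_2)) = 1$ since period and index share prime factors), one has $\ind(\beta_1 + \beta_2) \mid \ind(\beta_1)\ind(\beta_2)$ by taking compositums of splitting fields (using Lemma~\ref{lemma-index-defs}\eqref{index-min-splitting}), and conversely $\ind(\beta_1) \mid \ind(\beta_1+\beta_2)\cdot \ind(-\beta_2) = \ind(\beta_1+\beta_2)\cdot \ind(\beta_2)$, so coprimality of $\ind(\beta_1)$ and $\ind(\beta_2)$ forces $\ind(\beta_1) \ind(\beta_2) \mid \ind(\beta_1+\beta_2)$; the two divisibilities combine to equality.

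Granted the multiplicativity, the proof concludes cleanly: by hypothesis $\ind(\alpha_i) \leq \per(\alpha_i)^e$ for each $i$, and since both sides are powers of $p_i$, this inequality is equivalent to the divisibility $\ind(\alpha_i) \mid \per(\alpha_i)^e$. Hence
\begin{equation*}
\ind(\alpha) = \prod_i \ind(\alpha_i) \ \bigg| \ \prod_i \per(\alpha_i)^e = \Bigl(\prod_i \per(\alpha_i)\Bigr)^e = \per(\alpha)^e,
\end{equation*}
where the last equality uses that the periods $\per(\alpha_i) = p_i^{a_i}$ are pairwise coprime and multiply to $\per(\alpha) = n$. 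The main (and essentially only) obstacle is the multiplicativity of the index across the primary decomposition; this is classical, but it is the single substantive input and is where the coprimality assumption is used in an essential way.
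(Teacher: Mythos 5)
Your proof is correct and follows essentially the same route as the paper: decompose $\alpha$ into its $p$-primary components $\alpha_i$, observe that the hypothesis applies to each $\alpha_i \in G[p_i^\infty]$ and forces divisibility $\ind(\alpha_i) \mid \per(\alpha_i)^e$ (since both are powers of $p_i$), and assemble via the divisibility $\ind(\alpha) \mid \prod_i \ind(\alpha_i)$. The only difference is that the paper simply cites the classical equality $\ind(\alpha) = \prod_i \ind(\alpha_i)$ (noting that just the divisibility $\ind(\alpha) \mid \prod_i \ind(\alpha_i)$, via compositums of splitting fields, is what is actually used), whereas you supply a self-contained proof of the full multiplicativity; the reverse divisibility you establish is not needed for the conclusion, but it is not wrong and makes the argument more self-contained.
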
 

In this paper, we shall specifically be interested in the case where $K = k(X)$ is the function field of a smooth proper variety $X$ and $G = \Br(X)$.  

\begin{proof}
Recall that any torsion group is the direct sum of its $p$-primary parts. 
Let $\alpha \in G$ and let $\alpha = \alpha_1 + \alpha_2 + \cdots + \alpha_n$ be the decomposition into $p$-primary parts, so we have a factorization 
$\per(\alpha) = \prod_{i=1}^{n} p_i^{m_i}$ 
for distinct primes $p_i$ and  $\per(\alpha_i) = p_i^{m_i}$. 
It is clear that $\ind(\alpha)$ divides 
$\prod_{i=1}^{n} \ind(\alpha_i)$
(in fact these two numbers are equal \cite[Proposition 4.5.16]{GS}), so the claim reduces to the case when $\per(\alpha) = p^m$ is a prime power. 
But then $\ind(\alpha) \leq \per(\alpha)^e$ implies 
$\ind(\alpha)$ divides $\per(\alpha)^e$, because $\ind(\alpha)$ and $\per(\alpha)$ are both $p$-th powers.  
\end{proof} 

A key input for our proof of Theorem~\ref{theorem-pi-bound} is the $2$-dimensional case of the period-index conjecture, proved in 
\cite{dJ-period-index} when $\per(\alpha)$ is prime to the characteristic of $k$, 
and in \cite{lieblich-period-index, dJ-starr} in general: 

\begin{theorem}
\label{theorem-pi-surface} 
Let $K$ be a field of transcendence degree $2$ over an algebraically closed field $k$. For all $\alpha \in \Br(K)$, $\per(\alpha) = \ind(\alpha)$.  
\end{theorem}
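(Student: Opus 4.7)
The divisibility $\per(\alpha) \mid \ind(\alpha)$ is general, so the plan is to establish the reverse divisibility $\ind(\alpha) \mid \per(\alpha)$. First I would pass to a global geometric setting: by Lichtenbaum's theorem, $K$ admits a smooth projective model $X/k$, and after blowing up to resolve ramification (or invoking Gabber's absolute purity when $\per(\alpha)$ is coprime to $\characteristic(k)$) the class $\alpha$ extends to an unramified Brauer class $\alpha_X \in \Br(X)$. Lemma~\ref{lemma-index-bound-divisible} reduces further to $\per(\alpha) = p^n$ a prime power, and by Lemma~\ref{lemma-index-unramified} it suffices to produce a coherent $\cX$-twisted sheaf of rank exactly $p^n$ on some $\bG_m$-gerbe $\cX \to X$ of class $\alpha_X$.

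Following de Jong, the next step is to study the moduli stack $\cM(\cX, v)$ of Gieseker-semistable $\cX$-twisted sheaves with rank $p^n$ and suitably chosen determinant and second Chern character~$v$. A standard deformation-theoretic analysis shows this is a Deligne--Mumford stack whose expected dimension, given by a Mukai-type formula, can be arranged to be positive by adjusting $v$, and whose obstruction theory is controlled enough to guarantee smoothness along the stable locus. The problem is then reduced to showing that $\cM(\cX, v)$ has a geometric point for some admissible $v$, since any such sheaf automatically has rank divisible by $\ind(\alpha_X)$ and we have engineered the rank to be $\per(\alpha) = p^n$.

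The main obstacle is precisely this nonemptiness assertion; it is the technical heart of the proof and cannot be handled by abstract moduli theory alone. I would attack it by a degeneration argument: construct a one-parameter family whose generic fiber is $(X, \alpha_X)$ and whose special fiber is a pair $(X_0, \alpha_0)$ with $X_0$ carrying a ruling $X_0 \to \bP^1$, so that on the generic fiber $\bP^1_{k(\bP^1)}$ the restricted class is killed by Tsen's theorem. One then builds an explicit $\alpha_0$-twisted locally free sheaf of rank $p^n$ on $X_0$ (assembling it from the splitting on the generic fiber and coherently extending across the finitely many singular fibers) and deforms it back to $(X, \alpha_X)$ via the smoothness of the relative moduli stack. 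For the wildly ramified case $p = \characteristic(k)$ handled in \cite{lieblich-period-index, dJ-starr}, an additional input is needed — either an alteration making $\alpha$ tame, or Lieblich's machinery of asymptotically stable twisted sheaves on $\bmu_n$-gerbes — in order to carry out the same moduli-theoretic program.
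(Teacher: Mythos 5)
The paper establishes this result only by citation to \cite{dJ-period-index, lieblich-period-index, dJ-starr}, so there is no in-paper argument to compare against; your sketch of those references, however, has a substantive error at the outset. The claim that after blowing up a smooth projective model $X$ (or by invoking Gabber's absolute purity) a ramified class $\alpha \in \Br(K)$ extends to an \emph{unramified} class in $\Br(X)$ is false. The ramification of a Brauer class along a divisorial valuation of $K$ is a birational invariant: blowing up only introduces new exceptional divisors (on which $\alpha$ is automatically unramified), while the nontrivial residues along the strict transforms of the original ramification divisor persist. Gabber's absolute purity concerns removing closed subsets of codimension at least $2$, whereas the ramification locus of a Brauer class on a surface is a divisor, so purity does not apply either. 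You cannot force $\alpha$ to lie in $\Br(X)$ by modifying the model. As the paper itself notes in \S1.1, the reduction to the unramified case in \cite{dJ-starr} is achieved by \emph{discriminant avoidance}: a specialization argument in a universal family of Brauer classes which lands in the unramified locus over some \emph{other} function field of transcendence degree $2$, not over the same $K$ with a better model. De Jong's original paper \cite{dJ-period-index} likewise does not dispose of ramification by a naive resolution. (A minor slip: resolution of singularities for surfaces is not "Lichtenbaum's theorem" --- Lichtenbaum's theorem concerns period equals index for curves over $p$-adic fields.)

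The remainder of your sketch --- reduction to prime-power period via Lemma~\ref{lemma-index-bound-divisible}, reduction to producing a twisted sheaf of rank $p^n$ on a gerbe $\cX \to X$ via Lemma~\ref{lemma-index-unramified}, moduli of Gieseker-semistable twisted sheaves with a Mukai-type dimension formula, nonemptiness via degeneration to a ruled surface and Tsen's theorem on the generic fiber, and lifting back by deformation theory --- is a fair high-level description of the unramified argument in \cite{dJ-period-index, lieblich-period-index}. But without a correct treatment of the ramified case, the proposal as written does not establish the theorem in the generality stated.
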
 

We will also need a result of Matzri giving a weak bound on the index in terms of the period. 
Recall that a field $K$ is called $C_r$ for an integer $r > 0$ if for every $0 < d^r < n$ and every $f \in K[x_1, \dots, x_n]$ homogeneous of degree $d$, there exists a nontrivial solution of $f$ over $K$. 
By Tsen's theorem  \cite[\href{https://stacks.math.columbia.edu/tag/03RD}{Tag 03RD}]{stacks-project}, if $K$ has transcendence degree $r$ over an algebraically closed field, then $K$ is $C_r$. 

\begin{theorem}[\cite{matzri}]
\label{theorem-matzri} 
Let $K$ be a $C_r$ field, and let $p_1, \dots, p_n$ be a collection of primes. 
Then there exists a positive integer $e$ such that for all $\alpha \in \bigoplus_{i = 1}^n \Br(K)[p_i^{\infty}]$, $\ind(\alpha)$ divides $\per(\alpha)^e$. 
Explicitly, we can take $e = p^{r-1} - 1$ where $p$ is the maximum of the $p_i$. 
\end{theorem}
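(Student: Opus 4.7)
By Lemma~\ref{lemma-index-bound-divisible}, it suffices to establish the bound prime-by-prime: for each prime $p \in \{p_1,\ldots,p_n\}$ and each $\alpha \in \Br(K)[p^\infty]$, we need $\ind(\alpha)$ to divide $\per(\alpha)^{p^{r-1}-1}$. Since $q^{r-1}-1 \leq p^{r-1}-1$ when $q \leq p$, the uniform bound $e = p^{r-1}-1$ for the largest prime $p$ will then suffice. My plan is to proceed by induction on the exponent $m$ with $\per(\alpha)=p^m$, concentrating the technical work in the base case $m=1$.

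For the base case, first replace $K$ by a finite extension of degree prime to $p$, which preserves the $C_r$ hypothesis as well as the period and index of any $p$-primary Brauer class via a restriction-corestriction argument. This lets us assume $K$ contains a primitive $p$-th root of unity, reducing to the case $p \neq \characteristic(K)$ (the equal-characteristic case is handled in parallel using Artin--Schreier symbols). By Merkurjev--Suslin, any $\alpha \in \Br(K)[p]$ can be written as a sum of cyclic symbols $\alpha = \sum_{i=1}^{s}(a_i,b_i)_p$. Each symbol $(a_i,b_i)_p$ is split by the degree $p$ Kummer extension $K(b_i^{1/p})$, so the compositum $K(b_1^{1/p},\ldots,b_s^{1/p})$ splits $\alpha$ and has degree dividing $p^s$; by Lemma~\ref{lemma-index-defs}, $\ind(\alpha)$ divides $p^s$. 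The task therefore reduces to the symbol length estimate $s \leq p^{r-1}-1$, which is Matzri's central contribution: starting from a putative expression with $s \geq p^{r-1}$ symbols, one constructs an explicit homogeneous form (essentially a product of reduced norm forms of the symbol algebras) whose degree and number of variables are arranged so that the $C_r$ hypothesis forces a nontrivial zero, and this zero translates into an algebraic identity allowing $\alpha$ to be rewritten with strictly fewer symbols.

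For the inductive step, assume the bound for classes of period dividing $p^{m-1}$, and let $\alpha$ satisfy $\per(\alpha)=p^m$. Then $p^{m-1}\alpha$ has period dividing $p$, so by the base case there is a splitting field $L/K$ with $[L:K]$ dividing $p^{p^{r-1}-1}$. The class $\alpha\vert_L$ has period dividing $p^{m-1}$, and $L$ remains $C_r$ since finite extensions of $C_r$ fields are $C_r$. Applying the inductive hypothesis to $\alpha\vert_L$ over $L$ yields a splitting extension $M/L$ of degree dividing $p^{(m-1)(p^{r-1}-1)}$. Then $M$ splits $\alpha$ and $[M:K]$ divides $p^{m(p^{r-1}-1)}=\per(\alpha)^{p^{r-1}-1}$, and Lemma~\ref{lemma-index-defs} completes the induction.

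The main obstacle is unquestionably the symbol length bound in the base case. The construction of the auxiliary $C_r$-form is delicate because one must ensure that the zero produced by the $C_r$ hypothesis yields a genuine reduction in symbol length rather than a trivial identity, and the exact constant $p^{r-1}-1$ comes from a careful bookkeeping of degree versus number of variables in this construction. The remaining steps—the reduction to prime-power period, the presence of roots of unity, and the inductive passage from period $p$ to period $p^m$—are comparatively formal.
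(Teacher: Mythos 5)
Your proposal is correct and takes essentially the same approach as the paper: both reduce to a single prime via Lemma~\ref{lemma-index-bound-divisible} and then invoke Matzri's Theorem~6.3, which the paper cites directly for the full $\Br(K)[p^\infty]$ statement. The additional detail you give (the induction from period $p^m$ to period $p$, Merkurjev--Suslin, and the reduction to a symbol length bound) is a correct sketch of what lies inside the cited reference, but the crucial symbol length estimate is still taken from Matzri as a black box, so the mathematical content is the same.
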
 

\begin{proof}
The case where the number $n = 1$ is  \cite[Theorem 6.3]{matzri}. 
The general case follows by decomposing into $p$-primary parts, as in Lemma~\ref{lemma-index-bound-divisible}. 
\end{proof}

Note that the integer $e$ guaranteed by Theorem~\ref{theorem-matzri} depends heavily on $\per(\alpha)$, and as such the result is quite far from Conjecture~\ref{conjecture-pi-bound}.


\section{Upper bounds on the index over the complex numbers} 
\label{section-upper-bound}

In this section, we prove Theorem~\ref{theorem-pi-bound}. 
Although our main goal is a result over the complex numbers, in many places we work in greater generality, for use in \S\ref{section-ub-other-fields} when we consider more general base fields. 

\subsection{Pullbacks of Brauer classes from surfaces} 
\label{section-pullbacks-surface}
As explained in \S\ref{section-correspondences-Br}, 
if $\Gamma$ is a codimension $\dim(Y)$ cycle on $X \times Y$, then it induces a pullback map $\Gamma^* \colon \Br(Y) \to \Br(k(X))$. 
Below we show that when $Y$ is a surface, then classes in the image of $\Gamma^*$ have index uniformly bounded in terms of the period; this relies on the period-index conjecture for surfaces (Theorem~\ref{theorem-pi-surface}). 

\begin{lemma}
\label{lemma-surface}
Let $k$ be an algebraically closed field, 
let $X$ be a variety over $k$, 
and let $Y$ be a surface over $k$. 
Let 
\begin{equation*}
\Gamma = \sum_{i=1}^t a_i [Z_i]  \in \rZ^{2}(X \times Y) 
\end{equation*} 
be an integeral cycle, where $a_i \in \bZ$ and $Z_i \subset X \times Y$ are $\dim(X)$-dimensional subvarieties. 
Then there exist positive integers $C$ and $N$ such that 
for all $\alpha \in \Br(Y)$ with $\per(\alpha)$ invertible in $k$, 
$\ind(\Gamma^*(\alpha))$ divides $C \cdot \per(\alpha)^{N}$. 
Moreover, $C$ and $N$ may be chosen to depend only on the set of nonzero $\deg(Z_i/X)$ counted with multiplicity. 
\end{lemma}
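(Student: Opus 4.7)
The plan is to bound each summand $\ind(Z_i^*\alpha)$ in the decomposition $\Gamma^*\alpha = \sum_i a_i Z_i^*\alpha$ and combine using the multiplicative bound $\ind(\gamma_1 + \gamma_2) \mid \ind(\gamma_1)\ind(\gamma_2)$. If $p \colon Z_i \to X$ is not dominant then $Z_i^*\alpha = 0$ by definition, and if $q \colon Z_i \to Y$ is not dominant then $Z_i^*\alpha = 0$ by Remark~\ref{remark-Z-surface}. We may therefore assume each remaining $Z_i$ is dominant over both factors. Writing $K = k(X)$, $L_i = k(Z_i)$, $d_i = [L_i:K] = \deg(Z_i/X)$, and $n = \per(\alpha)$, the definition in \S\ref{section-correspondences-Br} identifies $Z_i^*\alpha$ with $p_{k(X)*}(\alpha|_{L_i})$, the corestriction of the restriction of $\alpha$ along $q^* \colon k(Y) \hookrightarrow L_i$.

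The key ingredient is Theorem~\ref{theorem-pi-surface} applied to the transcendence degree $2$ field $k(Y)$: it gives $\ind(\alpha|_{k(Y)}) = n$, and hence $\ind(\alpha|_{L_i})$ divides $n$ since the index can only decrease under restriction. Since $n$ is invertible in $k$, Lemma~\ref{lemma-index-defs} then supplies a separable splitting field of $\alpha|_{L_i}$ of degree dividing $n$.

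To bound the index of the corestriction, we propose to prove by induction on $d = [L:K]$ that for any finite separable extension $L/K$ and any $\beta \in \Br(L)$ with $\ind(\beta)$ dividing $n$, $\ind(f_*\beta)$ divides $d! \cdot n^d$, where $f \colon \Spec L \to \Spec K$. The inductive step begins by choosing a separable splitting field $E/L$ of $\beta$ of degree $m \mid n$. Flat base change for \'{e}tale cohomological pushforward gives
\[
\mathrm{res}_{E/K}(f_*\beta) = f'_*(\beta|_{L \otimes_K E}),
\]
where $f' \colon \Spec(L \otimes_K E) \to \Spec E$. Since $L \hookrightarrow E$, the \'{e}tale $E$-algebra $L \otimes_K E$ decomposes as $E \times R$ with $R$ of rank $d-1$, and the $E$-component of $\beta|_{L \otimes_K E}$ equals $\beta|_E = 0$. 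Writing $R = \prod_j R_j$ as a product of fields and applying the inductive hypothesis to each $\mathrm{cor}_{R_j/E}(\beta|_{R_j})$, together with the elementary divisibility $\ind(\gamma) \mid [E:K] \cdot \ind(\gamma|_E)$ for finite separable extensions and the multiplicativity of $\ind$ over sums, closes the induction since $\sum_j [R_j:E] = d-1$ and $\prod_j [R_j:E]! \mid (d-1)!$. Any inseparability of $L_i/K$ is absorbed by passing to the separable closure of $K$ in $L_i$, using that corestriction along a purely inseparable extension is multiplication by its degree on prime-to-$\characteristic(k)$ Brauer torsion.

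Combining the bound $\ind(Z_i^*\alpha) \mid d_i! \cdot n^{d_i}$ over all $Z_i$ yields $\ind(\Gamma^*\alpha)$ dividing $\prod_i d_i! \cdot n^{d_i} = C \cdot n^N$ with $C = \prod_i d_i!$ and $N = \sum_i d_i$, depending only on the multiset of nonzero $\deg(Z_i/X)$ as required. The main obstacle is obtaining polynomial rather than factorial dependence on $n$ in the corestriction bound: a naive Galois-closure argument gives $(d_i n)!$, which is not polynomial in $n$, and the key to closing the induction with the polynomial bound $d! \cdot n^d$ is that the base-change decomposition $L \otimes_K E = E \times R$ reduces the relative degree by exactly one at each step.
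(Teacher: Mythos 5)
Your proof is correct, but it takes a genuinely different route from the paper. The paper's proof works globally: it constructs a single dominant generically finite $p \colon W \to X$ inducing a Galois extension of function fields which factors through every $Z_i$, uses Lemma~\ref{lemma-push-pull} to write $p_{k(X)}^*(\Gamma^*\alpha)$ as a $\bZ$-linear combination of Galois conjugates of $q^*\alpha\vert_{k(W)}$, and then kills all $N \leq \sum_i d_i$ of these conjugates simultaneously by fiber-producting copies of a degree-$\per(\alpha)$ cover $Y' \to Y$ splitting $\alpha$ (supplied by Theorem~\ref{theorem-pi-surface}); this gives $\ind(\Gamma^*\alpha) \mid [k(W):k(X)] \cdot \per(\alpha)^N$ with $[k(W):k(X)]$ dividing $(\prod_i d_i)!$. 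You instead bound each $\ind(Z_i^*\alpha)$ separately by a purely Brauer-group-theoretic inductive lemma on the index of a corestriction: the base change $L \otimes_K E \cong E \times R$ against a degree-$m \mid n$ splitting field $E/L$ reduces the relative degree by one and closes the induction with the bound $d!\, n^d$. Both proofs ultimately rest on the same input (period equals index over the surface $k(Y)$), but you propagate it algebraically rather than geometrically. A pleasant byproduct is that your constant $C = \prod_i d_i!$ is substantially smaller than the paper's $(\prod_i d_i)!$, and still depends only on the multiset of nonzero $\deg(Z_i/X)$. Two places worth tightening in a full write-up: (i) justify that $R_j/E$ is separable (immediate since $L/K$ separable forces $L \otimes_K E$ étale over $E$), so the inductive hypothesis applies; and (ii) the inseparable case deserves a bit more than one sentence---after factoring $\mathrm{cor}_{L_i/K} = \mathrm{cor}_{L_i^s/K} \circ \mathrm{cor}_{L_i/L_i^s}$, one needs to check that $\ind(\mathrm{cor}_{L_i/L_i^s}\gamma) \mid n$, which holds because $\mathrm{cor}_{L_i/L_i^s}\gamma = q\gamma'$ with $q = [L_i : L_i^s]$ a $p$-power and $\gamma' = \mathrm{res}^{-1}\gamma$, and $\ind(\gamma')$ is prime to $p$ and divides $q \cdot \ind(\gamma)$ (a separable splitting field of $\gamma$ over $L_i$ also splits $\gamma'$ over $L_i^s$), hence divides $\ind(\gamma) \mid n$.
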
 

\begin{proof} 
If $Z_i$ does not dominate both $X$ and $Y$, then $[Z_i]^*(\alpha) = 0$ for $\alpha \in \Br(Y)$ (see Remark~\ref{remark-Z-surface}), 
so discarding terms if necessary we may assume that all $Z_i$ dominate $X$ and $Y$. 
Let $p \colon W \to X$ be a dominant generically finite morphism of varieties which induces a Galois extension of function fields and factors via $Z_i \to X$ for all $i$, so that we have diagrams: 
\begin{equation*}
\xymatrix{ 
&& \ar[ddll]_{p} W \ar[d]_{\pi_i} \ar[ddrr]^{q} && \\  
&& Z_i \ar[dll]^{p_i} \ar[drr]_{q_i} && \\  
X &&&& Y 
}
\end{equation*} 
Note that by Lemma~\ref{lemma-push-pull}, $p_{k(X)}^*( [Z_i]^*(\alpha))  = p_{k(X)}^*(p_{i})_{k(X)*}(q_{i}^*(\alpha)\vert_{k(Z_i)})$ is a sum of $\deg(Z_i/X)$ conjugates of $q^*(\alpha) \vert_{k(W)}$ by elements $\sigma \in \Gal(k(W)/k(X))$. 
Therefore $p_{k(X)}^*(\Gamma^*(\alpha))$ is an integral combination of Galois conjugates of $q^*(\alpha) \vert_{k(W)}$; say that $\sigma_1, \dots, \sigma_N \in \Gal(k(W)/k(X))$ are the elements of the Galois group appearing in the sum. 
Up to shrinking $W$ we may assume each $\sigma_i$ is the restriction of an automorphism $g_i$ of $W$ over $X$. 

By Theorem~\ref{theorem-pi-surface} and Lemma~\ref{lemma-index-defs}, we can find a surjective generically finite morphism $\mu \colon Y' \to Y$ of degree $\per(\alpha)$ such that $\mu^*(\alpha) = 0$. 
Consider the fiber product 
\begin{equation*}
\xymatrix{
W' \ar[rrr] \ar[d]_{\nu} &&& Y' \times \cdots \times Y' \ar[d]^{\mu \times \cdots \times \mu} \\ 
W \ar[rrr]^-{(q \circ g_1, \dots, q \circ g_N)} &&& Y \times \cdots \times Y 
}
\end{equation*} 
Then $\nu \colon W' \to W$ is a dominant generically finite morphism of degree $\per(\alpha)^N$. 
Moreover, $\nu^*g_i^*q^*(\alpha)$ vanishes because it is the pullback of $\alpha$ along a map $W' \to Y$ which factors via $\mu \colon Y' \to Y$. 
By our description of $p_{k(X)}^*(\Gamma^*(\alpha))$ as a sum of Galois conjugates above, it follows that  $\nu^*_{k(W)}p_{k(X)}^*(\Gamma^*(\alpha)) = 0$, so $\Gamma^*(\alpha)$ is killed by a degree $[k(W): k(X)] \per(\alpha)^N$ extension of $k(X)$. 
Thus by Lemma~\ref{lemma-index-defs} we conclude that $\ind(\Gamma^*(\alpha))$ divides $[k(W): k(X)] \per(\alpha)^N$. 

It remains to show the claim about the parameters on which $C = [k(W): k(X)]$ and $N$ depend. 
Note that the morphism $p \colon W \to X$ can be constructed by taking a Galois closure of 
the fiber product of the $Z_i$ over $X$; 
therefore, we may choose $p \colon W \to X$ so that its degree $[k(W) : k(X)]$ divides $( \prod_{i } \deg(Z_{i}/X))!$. 
Further, by our construction, $N \leq \sum_i \deg(Z_i/X)$. 
The claim follows. 
\end{proof} 

\subsection{Absolute case}
\label{section-absolute-complex}

\begin{proposition}
\label{proposition-cycle} 
Let $X$ be a smooth proper complex variety. 
Let $i \colon Y \to X$ be an embedding of a smooth proper surface $Y$. 
Assume there exists a cycle 
\begin{equation*}
\zeta = \sum_{i=1}^t a_i [Z_i]  \in \CH^{2}(X \times Y)_{\bQ}, 
\end{equation*} 
where $a_i \in \bQ$ and $Z_i \subset X \times Y$ are $\dim(X)$-dimensional subvarieties, 
such that the induced map 
\begin{equation*}
\zeta^* \colon \rH^2(Y, \bQ) \to \rH^2(X, \bQ) 
\end{equation*}
satisfies 
$\zeta^* \circ i^* = \id_{\rH^2(X, \bQ)}$. 
Then there exists a positive integer $e$ such that for all $\alpha \in \Br(X)$, $\ind(\alpha)$ divides $\per(\alpha)^e$. 
Moreover, $e$ may be chosen to depend only on: $\dim(X)$, the orders of $\rH^2(X, \bZ)_{\mathrm{tors}}$ and $\rH^3(X, \bZ)_{\mathrm{tors}}$, the least common multiple of the denominators of the $a_i$, and the set of nonzero $\deg(Z_i/X)$ counted with multiplicity. 
\end{proposition}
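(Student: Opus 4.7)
The plan is to follow the strategy sketched after Theorem~\ref{theorem-pi-bound}: exhibit an integer $m$ so that $\ind(m\alpha)$ is controlled by a power of $\per(\alpha)$ via the period-index theorem for surfaces (embedded in Lemma~\ref{lemma-surface}), then clean up the remaining finitely many primes with Matzri's bound. I first clear denominators: let $d$ be the least common multiple of the denominators of the $a_i$, so $\Gamma_0 \coloneqq d\zeta \in \CH^2(X \times Y)$ is integral and $\Gamma_0^* \circ i^* = d \cdot \id$ on $\rH^2(X, \bQ)$. Since this identity holds only rationally, the difference $\Gamma_0^* \circ i^* - d \cdot \id$ lands in the torsion subgroup of $\rH^2(X, \bZ)$ and is thus annihilated by $t_2 \coloneqq |\rH^2(X, \bZ)_{\tors}|$, yielding $(t_2 \Gamma_0)^* \circ i^* = t_2 d \cdot \id$ on $\rH^2(X, \bZ)$.

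The next step promotes this to an identity on the Brauer group. Set $t_3 \coloneqq |\rH^3(X, \bZ)_{\tors}|$, $\Gamma \coloneqq t_2 t_3 \Gamma_0$, and $m \coloneqq t_2 t_3 d$; I claim $\Gamma^* \circ i^* = m \cdot \id$ on $\Br(X)$. By Lemma~\ref{lemma-pairing-Br}, for each positive integer $n$ the action of the cycle on $\Br(X)[n]$ is compatible, via the Kummer sequence, with its action on $\rH^2_{\et}(X, \bmu_n) = \rH^2(X, \bZ/n)$ (the last identification using characteristic zero). Combining the Kummer sequence with the universal coefficient theorem produces a short exact sequence
\begin{equation*}
0 \to A_n \to \Br(X)[n] \to \rH^3(X, \bZ)[n] \to 0
\end{equation*}
in which $A_n$ is a quotient of $\rH^2(X, \bZ)/n$. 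On $A_n$ the action of $(t_2 \Gamma_0)^* \circ i^*$ is multiplication by $t_2 d$ by the previous paragraph, while the quotient $\rH^3(X, \bZ)[n]$ is annihilated by $t_3$. For $\alpha \in \Br(X)[n]$ we therefore have $t_3 \alpha \in A_n$, and $t_2 \Gamma_0^* \circ i^*(t_3 \alpha) = t_2 d t_3 \alpha$ in $\Br(X)[n]$, which rearranges to $\Gamma^* \circ i^*(\alpha) = m \alpha$.

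I then apply Lemma~\ref{lemma-surface} to the integral cycle $\Gamma$, which is supported on the same $Z_i$ as $\zeta$: it supplies positive integers $C$ and $N$, depending only on the multiset of nonzero $\deg(Z_i/X)$, such that $\ind(\Gamma^*(\beta)) \mid C \cdot \per(\beta)^N$ for every $\beta \in \Br(Y)$. Taking $\beta = i^*(\alpha)$ yields $\ind(m\alpha) \mid C \cdot \per(\alpha)^N$ for all $\alpha \in \Br(X)$. To deduce a bound on $\ind(\alpha)$ itself, I decompose $\alpha$ into its $p$-primary parts $\alpha_p$. For a prime $p$ coprime to $mC$, $m$ is a unit modulo $\per(\alpha_p)$, so $\alpha_p$ and $m\alpha_p$ generate the same cyclic subgroup of $\Br(k(X))$ and share the same index by Lemma~\ref{lemma-index-defs}; since $\ind(\alpha_p)$ is a $p$-power and $p \nmid C$, the displayed divisibility forces $\ind(\alpha_p) \mid \per(\alpha_p)^N$. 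For the finite set $P$ of primes dividing $mC$, the function field $k(X)$ is $C_{\dim X}$ by Tsen's theorem, so Theorem~\ref{theorem-matzri} provides an exponent $e_P$ with $\ind(\alpha_p) \mid \per(\alpha_p)^{e_P}$ for all $p \in P$. Setting $e \coloneqq \max(N, e_P)$, Lemma~\ref{lemma-index-bound-divisible} assembles these into $\ind(\alpha) \mid \per(\alpha)^e$ for every $\alpha \in \Br(X)$, and tracking the construction confirms $e$ depends only on the data named in the statement.

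The main obstacle I expect is the transition in the second paragraph, where one must leverage an identity on $\rH^2(X, \bZ)$ to obtain one on $\Br(X)$: the Kummer/universal coefficient formalism brings in $\rH^3(X, \bZ)[n]$, on which the cycle action is not directly controlled. The key observation is that this contribution is uniformly bounded in $n$ by $t_3$, which is precisely how the factor $t_3$ enters the definition of $m$.
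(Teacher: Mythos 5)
Your proof is correct and follows the same overall strategy as the paper: clear denominators, enlarge the cycle using $|\rH^2(X,\bZ)_{\tors}|$ and $|\rH^3(X,\bZ)_{\tors}|$ so that $\Gamma^* \circ i^* = m\cdot\id$ on $\Br(X)$, then feed this into Lemma~\ref{lemma-surface} to control $\ind(m\alpha)$. Your bookkeeping in the second paragraph, packaging the torsion argument via the short exact sequence $0 \to A_n \to \Br(X)[n] \to \rH^3(X,\bZ)[n] \to 0$, is a valid reformulation of the paper's computation, which works with $0 \to \rH^2(X,\bZ)/n \to \rH^2(X,\bmu_n) \to \rH^3(X,\bZ)[n] \to 0$ directly at the level of $\bmu_n$-cohomology; the two are equivalent.

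The endgame is where you genuinely diverge, and both routes are correct. The paper takes $\ind(m\alpha)\mid C\per(\alpha)^N$, extracts a splitting field $L_\alpha/k(X)$ of degree dividing $C\per(\alpha)^N$ over which $\per(\alpha|_{L_\alpha})\mid m$, applies Matzri over the $C_{\dim X}$ field $L_\alpha$ to produce a further extension of degree $d$ depending only on $\dim X$ and $m$, concludes $\ind(\alpha)\mid dC\per(\alpha)^N$, and converts to a clean power via $e=\lceil N+\log_2(dC)\rceil$ and Lemma~\ref{lemma-index-bound-divisible}. You instead pass immediately to $p$-primary components: for $p\nmid mC$ you observe that $\alpha_p$ and $m\alpha_p$ generate the same cyclic subgroup so $\ind(\alpha_p)=\ind(m\alpha_p)$, and since $\ind(\alpha_p)$ is a $p$-power while $p\nmid C$, the divisibility $\ind(m\alpha_p)\mid C\per(\alpha_p)^N$ yields $\ind(\alpha_p)\mid\per(\alpha_p)^N$; for the finitely many $p\mid mC$ you apply Matzri directly over $k(X)$. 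This avoids the intermediate field extension and the $\log_2$ estimate entirely, at the cost of a slightly more explicit case split. Both versions track the dependence of $e$ on the stated parameters correctly.
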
 

\begin{remark}
For our main results we do not need the last statement about the parameters on which $e$ depends. 
However, the corresponding statement will be relevant when we study more general base fields in \S\ref{section-ub-other-fields}; see  Proposition~\ref{proposition-cycle-k} and the proof of Theorem~\ref{theorem-pi-bound-absolute-k}.  
\end{remark}

\begin{proof}
After multiplying $\zeta$ by the least common multiple of the denominators of the $a_i$, 
we get an integral cycle $\Gamma = \sum_i b_i[Z_i] \in \CH^2(X \times Y)$. 
Up to multiplying further by a factor of $|\rH^2(X, \bZ)_{\tors}|$, we may assume that the composition 
\begin{equation*}
\rH^2(X, \bZ) \xrightarrow{\, i^* \,} \rH^2(Y, \bZ) \xrightarrow{ \, \Gamma^* \,} 
\rH^2(X, \bZ) 
\end{equation*} 
is multiplication by an integer $m > 0$. 
Note that for any integer $n > 0$ we have an exact sequence 
\begin{equation*}
0 \to \rH^2(X, \bZ)/n \to \rH^2(X, \bmu_n) \to \rH^3(X, \bZ)[n] \to 0. 
\end{equation*} 
Therefore, up to multiplying $\Gamma$ and $m$ further by a factor of $|\rH^3(X, \bZ)_{\tors}|$, we may assume that for all $n > 0$ the composition 
\begin{equation*}
\rH^2(X, \bmu_n) \xrightarrow{\, i^* \,} \rH^2(Y, \bmu_n) \xrightarrow{ \, \Gamma^* \,} 
\rH^2(X, \bmu_n) 
\end{equation*} 
is multiplication by $m$. 
It follows that $\Gamma^* \circ i^* \colon \Br(X) \to \Br(X)$ is also multiplication by $m$, see Lemma~\ref{lemma-pairing-Br}. 

Let $C$ and $N$ be integers as guaranteed by Lemma~\ref{lemma-surface} for the cycle $\Gamma$, 
so that for any $\beta \in \Br(Y)$ we have $\ind(\Gamma^*(\beta))$ divides $C \cdot \per(\beta)^N$. 
Then for any $\alpha \in \Br(X)$ we have $\ind(m \alpha)$ divides $C \cdot \per(\alpha)^N$, 
because $m\alpha = \Gamma^*(i^*(\alpha))$ as noted above. 
Thus, by Lemma~\ref{lemma-index-defs} there exists a field extension $L_{\alpha}/k(X)$ of degree dividing $C \cdot \per(\alpha)^N$ 
such that $\alpha \vert_{L_{\alpha}} \in \Br(L_{\alpha})$ has period dividing $m$. 
Since $L_{\alpha}$ is a $C_{\dim(X)}$ field, by Theorem~\ref{theorem-matzri} there is a 
further field extension $M_{\alpha}/L_{\alpha}$ of degree $d$ depending only on $\dim(X)$ and $m$ which splits $\alpha \vert_{L_{\alpha}}$. 
In summary, there is a 
field extension $M_{\alpha}/k(X)$ of degree dividing $dC \cdot \per(\alpha)^N$ which splits $\alpha \vert_{k(X)}$. 
Thus $\ind(\alpha)$ divides $dC \cdot \per(\alpha)^N$, again by Lemma~\ref{lemma-index-defs}. 

Now let $e$ be the integer given by the roundup of $N + \log_2(dC)$. 
Then for all $\alpha \in \Br(X)$, we have $\ind(\alpha) \leq \per(\alpha)^e$ . 
By Lemma~\ref{lemma-index-bound-divisible} we conclude that for all $\alpha \in \Br(X)$, 
$\ind(\alpha)$ divides $\per(\alpha)^e$. 

It remains to show the claim about the parameters on which $e$ depends. 
As noted above, $d$ only depends on $\dim(X)$ and $m$. 
In turn, by construction $m$ only depends on the least common multiple of the denominators of the $a_i$ and the orders of $\rH^2(X, \bZ)_{\mathrm{tors}}$ and $\rH^3(X, \bZ)_{\mathrm{tors}}$. 
By Lemma~\ref{lemma-surface}, $C$ and $N$ only depend on the set of nonzero $\deg(Z_i/X)$ counted with multiplicity. 
This proves the claim. 
\end{proof} 

Now we can prove the absolute case of Theorem~\ref{theorem-pi-bound}. 

\begin{theorem}
\label{theorem-pi-bound-absolute} 
Let $X$ be a smooth projective complex variety. 
Assume that the 
Lefschetz standard conjecture in degree $2$, i.e. Conjecture~\ref{conjecture-lefschetz} for $i = 2$, holds for $X$. Then Conjecture~\ref{conjecture-pi-bound} holds for $X$, i.e. there exists a positive integer~$e$ such that for all $\alpha \in \Br(X)$, 
$\ind(\alpha)$ divides $\per(\alpha)^e$. 
\end{theorem}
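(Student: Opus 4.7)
The plan is to reduce to Proposition~\ref{proposition-cycle} by constructing a surface $Y \subset X$ together with a rational correspondence $\zeta \in \CH^2(X \times Y)_{\bQ}$ such that $\zeta^* \circ i^* = \id_{\rH^2(X, \bQ)}$, using the Lefschetz standard conjecture to produce an algebraic inverse of the Lefschetz operator.

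Set $n = \dim(X)$. By Remark~\ref{remark-lc-independent-h} we may choose the polarization $h$ in Conjecture~\ref{conjecture-lefschetz} to be very ample, and then iterated Bertini over $\bC$ furnishes a smooth projective surface $i \colon Y \hookrightarrow X$ cut out as a complete intersection of $n-2$ members of $|h|$, so that $[Y] = h^{n-2}$ in $\rH^{2n-4}(X, \bQ)$. The hypothesis provides a cycle $\lambda \in \CH^2(X \times X)_{\bQ}$ for which $\lambda^* \colon \rH^{2n-2}(X, \bQ) \to \rH^2(X, \bQ)$ is inverse to $L^{n-2} = (-) \cup h^{n-2}$. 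Setting
\begin{equation*}
\zeta = \Gamma_i^t \circ \lambda,
\end{equation*}
the composition of correspondences from Remark~\ref{remark-correspondence-H}(2) with $\Gamma_i^t \subset X \times Y$ the transpose of the graph of $i$, a codimension count ($2 + n - n = 2$) shows $\zeta \in \CH^2(X \times Y)_{\bQ}$, so its components are subvarieties of $X \times Y$ of dimension $n = \dim(X)$, as required by Proposition~\ref{proposition-cycle}. Since $[\Gamma_i^t]^* = i_*$, the induced cohomological action is $\zeta^* = \lambda^* \circ i_*$.

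The identity $\zeta^* \circ i^* = \id_{\rH^2(X, \bQ)}$ then follows from the projection formula: for $\alpha \in \rH^2(X, \bQ)$,
\begin{equation*}
\zeta^*(i^*\alpha) = \lambda^*(i_* i^* \alpha) = \lambda^*(\alpha \cup [Y]) = \lambda^*(L^{n-2}\alpha) = \alpha.
\end{equation*}
Applying Proposition~\ref{proposition-cycle} yields the desired integer $e$. The substantive work has already been packaged into Proposition~\ref{proposition-cycle}, where the two-dimensional period-index theorem (Theorem~\ref{theorem-pi-surface}) is combined with Matzri's bound (Theorem~\ref{theorem-matzri}) to convert the identity $\zeta^* \circ i^* = \id$ into a uniform period-index estimate; the role of the Lefschetz standard conjecture in this argument is precisely to allow the surface $Y$ to witness the Brauer group of $X$ via an algebraic correspondence, and constructing this correspondence is the only nontrivial step that remains once Proposition~\ref{proposition-cycle} is granted.
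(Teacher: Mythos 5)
Your proof is correct and matches the paper's argument essentially verbatim: take a complete-intersection surface $i \colon Y \hookrightarrow X$ of class $h^{n-2}$, use the Lefschetz standard conjecture to produce $\lambda \in \CH^2(X \times X)_{\bQ}$ inverting $L^{n-2} = i_* \circ i^*$, set $\zeta = \Gamma_i^t \circ \lambda$, verify $\zeta^* \circ i^* = \id$ via the projection formula, and hand off to Proposition~\ref{proposition-cycle}. The only omission relative to the paper is the trivial reduction to $\dim(X) \geq 3$ (the period-index conjecture is already known in lower dimensions, and the complete-intersection construction of $Y$ would otherwise be degenerate), which you should record for completeness.
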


\begin{proof}
Let $n$ be the dimension of $X$, which we may assume is at least $3$ as the period-index conjecture is known in lower dimensions. 
Let $X \subset \bP^r$ be a closed immersion, and let $i \colon Y \to X$ be a smooth surface given as a complete intersection of $n-2$ hyperplane sections of $X$. 
If $h$ denotes the hyperplane class on $X$, then 
\begin{equation*} 
i_* \circ i^* = (-) \cup h^{n-2} \colon  \rH^2(X, \bQ) \to \rH^{2n-2}(X, \bQ). 
\end{equation*} 
By the Lefschetz standard conjecture in degree $2$, there exists a cycle $\lambda \in \CH^2(X \times X)_{\bQ}$ such that 
$\lambda^* \colon \rH^{2n-2}(X, \bQ) \to \rH^2(X, \bQ)$ is inverse to $i_* \circ i^*$. 
Therefore, 
\begin{equation*}
    \zeta \coloneqq \Gamma_i^{t} \circ \lambda \in \CH^2(X \times Y)_{\bQ}
\end{equation*}
is a cycle so that $\zeta^* \circ i^* = \id_{\rH^2(X, \bQ)}$ (see Remark~\ref{remark-correspondence-H}). Now the result follows from Proposition~\ref{proposition-cycle}. 
\end{proof}

\subsection{Relative case} 
\label{section-relative-case} 
We will prove Theorem~\ref{theorem-pi-bound} by reducing it to Theorem~\ref{theorem-pi-bound-absolute}. 
To do so, we first prove a couple preliminary results about Brauer classes in families. 

The following lemma says that a Brauer class on the fiber of a smooth proper morphism can be extended to a Brauer class on the total space, up to passing to an \'{e}tale neighborhood on the base (and assuming the period is invertible on the base). 
Here and below, we use the following notation: 
if $X \to S$ is a morphism of schemes, $\alpha \in \Br(X)$, and $s \colon \Spec(k) \to S$ is a field-valued point, we write $\alpha_s$ for the pullback of $\alpha$ to $X_s$. 

\begin{lemma}
\label{lemma-extend-brauer}
Let $f \colon X \to S$ be a smooth proper morphism of schemes. 
Let $\bar{s}$ be a geometric point of $S$ and $\alpha \in \Br(X_{\bar{s}})$, 
with $\per(\alpha)$ invertible on $S$. 
Then there exists an \'{e}tale neighborhood $(U,\bar{u}) \to (S, \bar{s})$ and a class $\tilde{\alpha} \in \Br(X_U)$ such that  $\tilde{\alpha}_{\bar{u}} = \alpha_{\bar{u}}$ and $\per(\tilde{\alpha}_{t})$ divides $\per(\alpha)$ for every $t \in U$.
\end{lemma}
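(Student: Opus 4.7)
The plan is to lift $\alpha$ to a class in $\rH^2_{\et}(X_{\bar{s}}, \bmu_n)$ for $n = \per(\alpha)$ via the Kummer sequence, spread this out to a class on $X_U$ for some étale neighborhood $U$ using proper base change on the strict henselization at $\bar{s}$, and then project back down to the Brauer group to obtain $\widetilde{\alpha}$.

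In detail, set $n = \per(\alpha)$. The Kummer sequence~\eqref{kummer-seq} applied to $X_{\bar{s}}$ gives a surjection $\rH^2_{\et}(X_{\bar{s}}, \bmu_n) \twoheadrightarrow \Br(X_{\bar{s}})[n]$, so choose a lift $\alpha' \in \rH^2_{\et}(X_{\bar{s}}, \bmu_n)$ of $\alpha$. Let $S^{\mathrm{sh}} = \Spec(\cO_{S, \bar{s}}^{\mathrm{sh}})$ denote the strict henselization at $\bar{s}$. Since $S^{\mathrm{sh}}$ is strictly henselian local, all its higher étale cohomology with coefficients in any abelian sheaf vanishes, so the Leray spectral sequence for $X_{S^{\mathrm{sh}}} \to S^{\mathrm{sh}}$ degenerates. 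Combined with proper base change (applicable because $f$ is proper and $n$ is invertible on $S$), this produces a chain of isomorphisms
\[
\rH^2_{\et}(X_{S^{\mathrm{sh}}}, \bmu_n) \;\cong\; \rH^0(S^{\mathrm{sh}}, R^2 f_* \bmu_n) \;\cong\; (R^2 f_* \bmu_n)_{\bar{s}} \;\cong\; \rH^2_{\et}(X_{\bar{s}}, \bmu_n),
\]
under which $\alpha'$ lifts to a class $\widetilde{\alpha}' \in \rH^2_{\et}(X_{S^{\mathrm{sh}}}, \bmu_n)$.

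Since $S^{\mathrm{sh}}$ is the cofiltered limit of étale neighborhoods $(U, \bar{u}) \to (S, \bar{s})$ with affine transition maps, étale cohomology with torsion coefficients commutes with these limits, giving $\rH^2_{\et}(X_{S^{\mathrm{sh}}}, \bmu_n) = \colim_U \rH^2_{\et}(X_U, \bmu_n)$. Hence $\widetilde{\alpha}'$ descends to a class in $\rH^2_{\et}(X_U, \bmu_n)$ for some étale neighborhood $U$, restricting to $\alpha'$ on the fiber $X_{\bar{u}}$. Define $\widetilde{\alpha} \in \Br(X_U)[n]$ as the image of this class under the Kummer map. Naturality of the Kummer sequence and the identification $X_{\bar{u}} \cong X_{\bar{s}}$ give $\widetilde{\alpha}_{\bar{u}} = \alpha_{\bar{u}}$. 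Since $\widetilde{\alpha}$ is $n$-torsion by construction, so is its restriction $\widetilde{\alpha}_t$ to any fiber $X_t$, and thus $\per(\widetilde{\alpha}_t)$ divides $n = \per(\alpha)$.

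The main technical ingredient is the Leray degeneration on $S^{\mathrm{sh}}$, which rests on proper base change together with the vanishing of higher étale cohomology on strictly henselian local schemes; beyond that, extracting an actual étale neighborhood from the strict henselization is formal from the continuity of étale cohomology under cofiltered limits of qcqs schemes with affine transition maps.
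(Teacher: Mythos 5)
Your proof is correct, but follows a genuinely different route from the paper. Both arguments begin identically, lifting $\alpha$ to a class in $\rH^2_{\et}(X_{\bar{s}}, \bmu_n)$ via the Kummer sequence; they diverge in how this class is extended over an \'{e}tale neighborhood. The paper works the Leray spectral sequence over an explicit \'{e}tale neighborhood $V$: it first spreads the class out to a section $\theta$ of $\rR^2 f_{V*}\bmu_n$ (using local constancy from smooth--proper base change) and then observes that the obstructions to lifting $\theta$ into $\rH^2_{\et}(X_V, \bmu_n)$ live in $\rH^2_{\et}(V, \rR^1 f_{V*}\bmu_n)$ and $\rH^3_{\et}(V, \rR^0 f_{V*}\bmu_n)$, which can be killed by shrinking $V$. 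You instead pass straight to the strict henselization $S^{\mathrm{sh}}$, where the Leray spectral sequence degenerates outright (a strictly henselian local scheme has no higher \'{e}tale cohomology and global sections compute the stalk at the closed point), so the lift exists without any obstruction analysis; descent to an actual \'{e}tale neighborhood is then handled by continuity of \'{e}tale cohomology under cofiltered limits of qcqs schemes with affine transition maps. Your approach swaps explicit differential-chasing for a standard limit argument; it also does not actually use the smoothness of $f$ (the paper uses it only for local constancy of $\rR^2 f_*\bmu_n$, which your argument bypasses). One harmless misstatement: you cite invertibility of $n$ on $S$ as a hypothesis for proper base change, but proper base change holds for arbitrary torsion coefficients; the invertibility is only needed for the smooth/local-acyclicity form of base change, which the paper uses and you do not.
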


\begin{proof}
Let $n = \per(\alpha)$. 
Recall from~\eqref{kummer-seq} that we have a surjection $\rH^2_{\et}(X_{\bar{s}}, \bmu_n) \twoheadrightarrow \Br(X_{\bar{s}})[n]$. 
Choose a lift $a \in \rH^2_{\et}(X_{\bar{s}}, \bmu_n)$ of $\alpha$ along this map. 
As $f \colon X \to S$ is smooth and proper and $n$ is invertible on $S$, the sheaf $\rR^2f_{*} \bmu_n$ is locally constant and its formation commutes with base change. 
Thus we can find an \'{e}tale neighborhood $(V, \bar{v}) \to (S, \bar{s})$ over which $a_{\bar{v}}$ extends to a section $\theta$ of $\rR^2f_{V*} \bmu_n$, where $f_{V} \colon X_{V} \to V$ is the base changed map and $a_{\bar{v}}$ is the pullback of $a$ to the fiber $X_{\bar{v}}$. 

Now consider the restriction map $\rH_{\et}^2(X_{V}, \bmu_n) \to \rH^0_{\et}(V, \rR^2f_{V*} \bmu_n)$. 
The obstruction to lifting $\theta$ along this map is whether it survives the Leray spectral sequence for $f_{V} \colon X_{V} \to V$. 
In other words, $\theta$ lifts to $\rH_{\et}^2(X_{V}, \bmu_n)$ if and only if 
\begin{equation*}
    d_2(\theta) = 0 \in \rH^2_{\et}(V, \rR^1f_{V*}\bmu_n) 
    \quad \text{and} 
    \quad 
    d_3(\theta) = 0 \in \rH^3_{\et}(V, \rR^0f_{V*}\bmu_n) 
\end{equation*}
where $d_2$ is the $E_2$-differential and $d_3$ is the $E_3$-differential in the Leray spectral sequence. 
A higher \'{e}tale cohomology class can be killed on a sufficiently fine \'{e}tale neighorhood of any point. Therefore, we can find an \'{e}tale neighborhood $(U,\bar{u}) \to (V, \bar{v})$ over which there is no obstruction to lifting the pullback of $\theta$ to a class $\tilde{a} \in \rH_{\et}^2(X_U, \bmu_n)$. 
The image $\tilde{\alpha}$ of $\tilde{a}$ in $\Br(X_U)[n]$ is the desired class. 
\end{proof}

Next we observe that the index of a Brauer class behaves well under specialization. 

\begin{lemma}
\label{lemma-specialize-index}
Let $X \to S$ be a smooth surjective morphism of varieties over a field, and let 
 $\alpha \in \Br(X)$. 
If $\bar{\eta}$ is the geometric generic point of $S$ and  $\bar{s}$ is a geometric point of $S$, then $\ind(\alpha_{\bar{s}})$ divides $\ind(\alpha_{\bar{\eta}})$. 
\end{lemma}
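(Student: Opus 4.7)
The plan is to combine the twisted-sheaf characterization of the index (Lemma~\ref{lemma-index-unramified}) with a spreading-out argument whose flattening step is cleanest over a DVR. The first step is to reduce to the case $S = \Spec(R)$ for a DVR $R$. Since $S$ is a catenary variety, the specialization from $\eta$ to the image $s \in S$ of $\bar s$ factors as a chain of codimension-one specializations $\eta = \eta_0 \rightsquigarrow \eta_1 \rightsquigarrow \cdots \rightsquigarrow \eta_n = s$. For each consecutive pair, normalizing the closure $\overline{\{\eta_i\}}$ and localizing at a point above $\eta_{i+1}$ produces a DVR $R_i$ with fraction field $\kappa(\eta_i)$ and residue field finite over $\kappa(\eta_{i+1})$. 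The DVR case, applied to the pullback $X \times_S \Spec(R_i) \to \Spec(R_i)$, then yields $\ind(\alpha_{\bar\eta_{i+1}}) \mid \ind(\alpha_{\bar\eta_i})$, and chaining these divisibilities gives the general statement.

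For the DVR case, fix a $\bG_m$-gerbe $\cX \to X$ of class $\alpha$ and, via Lemma~\ref{lemma-index-unramified}, a coherent $\cX_{\bar\eta}$-twisted sheaf $F$ of rank $r = \ind(\alpha_{\bar\eta})$. Writing $K$ for the fraction field of $R$, the presentation $\bar\eta = \varprojlim_L \Spec(L)$ over finite subextensions $L/K$ lets $F$ descend to a coherent $\cX_L$-twisted sheaf $F_L$ of rank $r$ for some finite $L/K$. Let $R'$ be the localization of the integral closure of $R$ in $L$ at a maximal ideal; this is a DVR with fraction field $L$ and residue field $\kappa'$ finite over the residue field $\kappa$ of $R$. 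Then $F_L$ extends to a coherent $\cX_{R'}$-twisted sheaf $\tilde F$ by the standard extension of coherent sheaves across an open embedding, cf.~\cite[Lemma~3.1.3.1]{lieblich-period-index}.

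The crucial remaining step is to guarantee that the special fiber has rank exactly $r$, and this is the main obstacle: without reducing to a one-dimensional base, a coherent extension may exhibit jumping fiber ranks that derail the argument. Over the DVR $R'$, however, the $R'$-torsion subsheaf $T \subset \tilde F$ is supported on the closed fiber and does not alter $F_L$; replacing $\tilde F$ by $\tilde F/T$ makes $\tilde F$ torsion-free and hence flat over $R'$. Since $X_{R'} \to \Spec(R')$ is smooth, a uniformizer of $R'$ remains a uniformizer at the generic point of $X_{\kappa'}$, so this flatness implies that $\tilde F$ is free of rank $r$ there, and the special fiber $\tilde F_{\kappa'}$ is a coherent $\cX_{\kappa'}$-twisted sheaf of rank $r$. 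Base changing to an algebraic closure of $\kappa'$—which agrees with $\kappa(\bar s)$ because $\kappa'/\kappa$ is finite—yields a coherent $\cX_{\bar s}$-twisted sheaf of rank $r$, so Lemma~\ref{lemma-index-unramified} gives $\ind(\alpha_{\bar s}) \mid r$, as desired.
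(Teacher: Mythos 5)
Your proof is correct, but takes a genuinely different route from the paper's. The paper spreads out the coherent $\cX_{\bar\eta}$-twisted sheaf to a dominant finite-type $U \to S$, compactifies, and applies de Jong's alterations to obtain a proper morphism $T \to S$ with $T$ regular; then the extension $F$ on $\cX_T$ is automatically a \emph{perfect} twisted complex (regularity of $X_T$), so its derived restriction $F_{\bar t}$ to $\cX_{\bar s}$ has well-defined rank $r$, locally constant over the connected $T$. The conclusion follows because this rank is a $\bZ$-linear combination of ranks of coherent $\cX_{\bar s}$-twisted sheaves, and by Lemma~\ref{lemma-index-unramified} the index is the gcd of such ranks. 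You instead reduce to a one-dimensional base by chaining codimension-one specializations (which, as you note, is exactly where jumping-rank phenomena would otherwise derail a naive spreading-out), and over a DVR the flattening is free: killing $R'$-torsion yields a flat extension, and smoothness over $R'$ guarantees the special-fiber rank is preserved, producing an actual coherent $\cX_{\bar s}$-twisted sheaf of rank $r$. Your route avoids alterations and the machinery of perfect complexes, at the cost of the chain reduction; the paper's route handles the general base in one step via perfectness, but invokes \cite{alterations}. One small imprecision: the algebraic closure of $\kappa'$ need not coincide with $\kappa(\bar s)$ (a geometric point may have a larger algebraically closed residue field than $\overline{\kappa(s)}$), but this is harmless since the index can only divide after further field extension.
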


\begin{proof} 
Let $\cX \to X$ be a $\bG_m$-gerbe of class $\alpha$. 
Note that $\cX_{\bar{\eta}} \to X_{\bar{\eta}}$ and $\cX_{\bar{s}} \to X_{\bar{s}}$ are $\bG_m$-gerbes of classes $\alpha_{\bar{\eta}}$ and $\alpha_{\bar{s}}$. 
By Lemma~\ref{lemma-index-unramified} there exists a coherent $\cX_{\bar{\eta}}$-twisted sheaf of rank $r = \ind(\alpha_{\bar{\eta}})$. 
The proof proceeds by spreading out this sheaf and then specializing over $\bar{s}$. 

Namely, by spreading out we obtain a separated, finite type, dominant morphism $U \to S$ and a coherent $\cX_{U}$-twisted sheaf $F_U$ of rank $r$. 
Choose a compactification $T$ of $U \to S$ \cite[\href{https://stacks.math.columbia.edu/tag/0F41}{Tag 0F41}]{stacks-project}, i.e. a proper morphism $T \to S$ such that $U \to S$ factors via a dense open immersion $U \hookrightarrow T$. 
We may also assume that $T$ is regular by taking an alteration \cite[Theorem 4.1]{alterations}. 
The coherent $\cX_U$-twisted sheaf $F_U$ extends to a $\cX_T$-twisted sheaf $F$ (cf. \cite[Lemma 3.1.3.1]{lieblich-moduli-twisted}). 
Note that as $X_T \to T$ is smooth and $T$ is regular, $F$ is in fact a perfect $\cX_T$-twisted complex. 

The map $T \to S$ is surjective by construction, so any geometric point $\bar{s} \colon \Spec(k) \to S$ lifts to a geometric point $\bar{t} \colon \Spec(k) \to T$. 
As $F$ is perfect, its derived restriction $F_{\bar{t}}$ to $\cX_{\bar{t}} \cong \cX_{\bar{s}}$ is a perfect complex of the same rank $r$. 
Note that the rank of $F_{\bar{t}}$ can be expressed as a $\bZ$-linear combination of ranks of coherent $\cX_{\bar{t}}$-twisted sheaves. 
We conclude by Lemma~\ref{lemma-index-unramified} that $\ind(\alpha_{\bar{s}})$ divides $r$. 
\end{proof}

\begin{proof}[Proof of Theorem~\ref{theorem-pi-bound}]
If $\bar{s}$ is a geometric point of $S$ and $\alpha \in \Br(X_{\bar{s}})$, then by 
Lemmas~\ref{lemma-extend-brauer} and~\ref{lemma-specialize-index} we may find a class $\beta \in \Br(X_{\bar{\eta}})$ such that $\ind(\alpha)$ divides $\ind(\beta)$ and $\per(\beta)$ divides $\per(\alpha)$. 
On the other hand, 
the very general fiber of $X \to S$ is isomorphic to the 
geometric generic fiber, i.e. for very general $t \in S(\bC)$ there is an isomorphism $X_{t} \cong X_{\bar{\eta}}$ of schemes lying over an isomorphism $\bC \cong \overline{k(S)}$ (see e.g. \cite[Lemma 2.1]{vial}). 
Therefore, the result follows from Theorem~\ref{theorem-pi-bound-absolute}. 
\end{proof} 


\section{Upper bounds on the index over other fields} 
\label{section-ub-other-fields}  
In this section we explain how to extend Theorem~\ref{theorem-pi-bound} to arbitrary algebraically closed base fields. 
The main results are Theorem~\ref{theorem-pi-bound-absolute-k} and Theorem~\ref{theorem-pi-bound-relative-k}. 

\subsection{Lefschetz standard conjecture} 
Similar to Conjecture~\ref{conjecture-lefschetz}, the Lefschetz standard conjecture can be formulated for any suitable Weil cohomology theory for varieties over $k$. 
We consider $\ell$-adic  cohomology, as this is what will be used below. 

The setup is as follows. 
Let $X$ be a smooth projective variety  of dimension $n$ over an algebraically closed field $k$, with $h \in \CH^1(X)$ the first Chern class of an ample line bundle. 
For any prime $\ell \neq \characteristic(k)$ and $0 \leq i \leq n$, the hard Lefschetz theorem for \'{e}tale cohomology says that the map 
\begin{equation*}
    L_{\ell}^{n-i} \coloneqq (-) \cup h^{n-i} \colon \rH^i_{\et}(X, \bQ_{\ell}) \to \rH^{2n-i}_{\et}(X, \bQ_{\ell})
\end{equation*}
is an isomorphism. 

\begin{conjecture}[$\ell$-independent Lefschetz standard conjecture in degree $i$]
\label{conjecture-l-lefschetz}
For $X$, $h$, and $i$ as above, there exists a codimension $i$ cycle $\lambda \in \CH^i(X \times X)_{\bQ}$ such that for every prime $\ell \neq \characteristic(k)$ the induced map 
\begin{equation*}
    \lambda^* \colon \rH_{\et}^{2n-i}(X, \bQ_{\ell}) \to \rH_{\et}^{i}(X, \bQ_{\ell}) 
\end{equation*}
is the inverse of $L_{\ell}^{n-i}$.  
\end{conjecture}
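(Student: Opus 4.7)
The plan is not to attempt an unconditional proof—Conjecture~\ref{conjecture-l-lefschetz} strengthens the ordinary Lefschetz standard conjecture by demanding a single rational cycle that simultaneously inverts $L_\ell^{n-i}$ in $\rH_{\et}^{*}(X, \bQ_\ell)$ for every $\ell \neq \characteristic(k)$, and this is wide open—but rather to sketch how I would reduce it to more familiar conjectures, mirroring the $\bC$-story of Remark~\ref{remark-lc-independent-h}.

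First, I would observe that the existence of the required $\lambda$ is equivalent, prime by prime, to the existence of some $\lambda \in \CH^i(X \times X)_{\bQ}$ whose $\ell$-adic action $\rH_{\et}^{2n-i}(X, \bQ_\ell) \to \rH_{\et}^{i}(X, \bQ_\ell)$ is merely an isomorphism: the argument of \cite[Theorem 4.1]{kleiman-standard} (cf. \cite[Lemma 6]{charles-remarks}) is formal in any Weil cohomology satisfying hard Lefschetz and Poincar\'e duality, so it applies to $\rH_{\et}^{*}(-, \bQ_\ell)$ for each fixed $\ell$. This reduces the problem to producing a single rational cycle that is ``an isomorphism'' in every $\ell$-adic realization.

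Second, to arrange $\ell$-independence I would invoke Grothendieck's standard conjecture~D (homological equivalence equals numerical equivalence for cycles on $X \times X$). Under Conjecture~D, the image of $\CH^i(X \times X)_{\bQ}$ in $\rH^{2i}_{\et}(X \times X, \bQ_\ell)$ is, after modding out numerical equivalence, canonically identified with a fixed $\bQ$-vector space that does not depend on $\ell$; hence if some cycle acts invertibly in one $\ell$-adic realization, a suitable modification acts invertibly in all. Combined with the $\ell$-adic Lefschetz standard conjecture for a single prime, this yields Conjecture~\ref{conjecture-l-lefschetz}.

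The hard part is of course Conjecture~D, which is itself open. Over $k = \bC$ one sidesteps everything via the Artin comparison $\rH^*(X, \bQ) \otimes \bQ_\ell \cong \rH^{*}_{\et}(X, \bQ_\ell)$: a cycle witnessing Conjecture~\ref{conjecture-lefschetz} automatically works for every $\ell$ at once, so Conjecture~\ref{conjecture-l-lefschetz} is a consequence of the classical one. In positive characteristic, the main obstacle is precisely the absence of an $\ell$-independent realization functor; I would therefore not expect an unconditional argument in general, and would instead verify Conjecture~\ref{conjecture-l-lefschetz} case by case in the special geometries of Remark~\ref{remark-lc-results} (abelian varieties via \cite{lieberman}, Hilbert schemes via \cite{arapura}, K3$^{[n]}$-type varieties via \cite{charles-markman}, etc.), where the cycles exhibiting the ordinary Lefschetz conjecture are produced by geometric constructions whose cohomology classes are manifestly $\ell$-independent.
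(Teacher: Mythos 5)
The statement you were asked about is a \emph{conjecture}, not a theorem, so the paper does not prove it; you correctly recognized this and instead discussed reductions and partial results, which is the right instinct. Your discussion parallels the paper's Remark~\ref{remark-l-lefschetz} in two respects: the Artin comparison argument over $\bC$ (and hence over any characteristic-zero field) is exactly what the paper says, and your proposal to verify the conjecture case by case in the special geometries of Remark~\ref{remark-lc-results} is how the paper actually uses it (see the Corollary after Theorem~\ref{theorem-pi-bound}).

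Where you diverge from the paper, and where there is a genuine gap, is in the conjecture you invoke to get $\ell$-independence in positive characteristic. The paper cites \cite[Proposition 5.4]{kleiman-standard} for the implication ``Hodge standard conjecture $I(X \times X)$ $\Rightarrow$ $\ell$-independence of the Lefschetz cycle,'' whereas you invoke Conjecture~D (homological equals numerical equivalence). These are different conjectures, and your sketch of why D suffices is incomplete. Your argument is that under D, the image of $\CH^i(X \times X)_{\bQ}$ in each $\rH^{2i}_{\et}(X \times X, \bQ_\ell)$ is identified with the $\ell$-independent space of numerical cycle classes, so ``if some cycle acts invertibly in one realization, a suitable modification acts invertibly in all.'' But the invertibility you need is invertibility of $\lambda^* \circ L_\ell^{n-i}$ as an endomorphism of the \emph{entire} cohomology group $\rH^i_{\et}(X, \bQ_\ell)$, not just of its algebraic part; the $\ell$-uniformity of cycle classes afforded by D does not by itself control the characteristic polynomial of a correspondence acting on the full $\rH^i$, which is the quantity whose $\ell$-independence you actually need. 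Kleiman's argument gets this uniformity from the positivity in the Hodge standard conjecture (which forces traces of algebraic correspondences on primitive pieces to be $\ell$-independent rational numbers). Going through D alone, one would at minimum also need Künneth-type algebraicity (Conjecture~C) to have access to trace arguments, and the implication D $\Rightarrow$ C on $X$ requires justification. So your reduction should either be replaced by the Hodge standard conjecture as in the paper, or spelled out in much more detail to see whether D actually suffices.
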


\begin{remark}
\label{remark-l-lefschetz}
We warn the reader that 
``the Lefschetz standard conjecture in degree $i$'' sometimes refers to different statements in the literature. 
We have incorporated ``independence of $\ell$'' in our formulation, which is nonstandard but useful for our purposes. 
(In fact, for our purposes it would be sufficient to know the statement for all but finitely many $\ell$ --- see  Remark~\ref{remark-almost-all-l-suffice} --- but we did not formulate the conjecture in this way as it seemed artificial.)

In some cases, independence of $\ell$ comes for free. 
For instance, when $k = \bC$, the comparison isomorphism with singular cohomology shows that Conjecture~\ref{conjecture-l-lefschetz} coincides with Conjecture~\ref{conjecture-lefschetz}.
Similarly, when $\characteristic(k) = 0$, it follows that the assertion of the Conjecture~\ref{conjecture-l-lefschetz} for a single $\ell$ implies it for all $\ell$. 
When $\characteristic(k) > 0$, such an independence of $\ell$ statement is unknown, but it would follow from the Hodge standard conjecture on $X \times X$, cf. \cite[Proposition 5.4]{kleiman-standard}. 
\end{remark}

\subsection{Absolute case}

\begin{proposition}
\label{proposition-cycle-k} 
Let $X$ be a smooth proper variety over an algebraically closed field $k$. 
Let $i \colon Y \to X$ be an embedding of a smooth proper surface $Y$. 
Let $\ell \neq \characteristic(k)$ be a prime. 
Assume there exists a cycle 
\begin{equation*}
\zeta = \sum_{i=1}^t a_i [Z_i]  \in \CH^{2}(X \times Y)_{\bQ}, 
\end{equation*} 
where $a_i \in \bQ$ and $Z_i \subset X \times Y$ are $\dim(X)$-dimensional subvarieties, 
such that the induced map 
\begin{equation*}
\zeta^* \colon \rH^2_{\et}(Y, \bQ_{\ell}) \to \rH^2_{\et}(X, \bQ_{\ell}) 
\end{equation*}
satisfies $\zeta^* \circ i^* = \id_{\rH^2_{\et}(X, \bQ_{\ell})}$. 
Then there exists a positive integer $e$ such that for all $\ell$-primary classes $\alpha \in \Br(X)[\ell^{\infty}]$,  
$\ind(\alpha)$ divides $\per(\alpha)^e$. 
Moreover, $e$ may be chosen to depend only on: $\dim(X)$, the orders of $\rH^2_{\et}(X, \bZ_{\ell})_{\mathrm{tors}}$ and $\rH^3_{\et}(X, \bZ_{\ell})_{\mathrm{tors}}$, the least common multiple of the denominators of the $a_i$, and the set of nonzero $\deg(Z_i/X)$ counted with multiplicity. 
\end{proposition}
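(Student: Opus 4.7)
The plan is to follow the template of the proof of Proposition~\ref{proposition-cycle}, replacing singular cohomology with $\ell$-adic \'{e}tale cohomology and working throughout with the $\ell$-primary part of the Brauer group. The restriction to $\Br(X)[\ell^\infty]$ in the statement is precisely what allows a single prime $\ell$ to control everything: all numerical information that formerly required all of $\rH^2(X,\bZ)$ and $\rH^3(X,\bZ)$ will be replaced by information about $\rH^2_{\et}(X,\bZ_\ell)$ and $\rH^3_{\et}(X,\bZ_\ell)$ only.

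First I would clear the denominators of the $a_i$ to obtain an integral cycle $\Gamma = \sum_i b_i [Z_i]\in\CH^2(X\times Y)$. Multiplying by a further factor of $|\rH^2_{\et}(X,\bZ_\ell)_{\tors}|$ ensures that the composite $\Gamma^*\circ i^*\colon \rH^2_{\et}(X,\bZ_\ell)\to\rH^2_{\et}(X,\bZ_\ell)$ is multiplication by an integer $m>0$. For any $n\geq 1$ the $\ell$-adic Kummer sequence gives the short exact sequence
\begin{equation*}
0\to \rH^2_{\et}(X,\bZ_\ell)/\ell^n\to \rH^2_{\et}(X,\bmu_{\ell^n})\to \rH^3_{\et}(X,\bZ_\ell)[\ell^n]\to 0,
\end{equation*}
so an additional factor of $|\rH^3_{\et}(X,\bZ_\ell)_{\tors}|$ makes $\Gamma^*\circ i^*$ act by multiplication by $m$ on $\rH^2_{\et}(X,\bmu_{\ell^n})$ for every $n$. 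By Lemma~\ref{lemma-pairing-Br} (applied with $n=\ell^n$), the composite $\Gamma^*\circ i^*\colon \Br(X)[\ell^\infty]\to \Br(X)[\ell^\infty]$ is multiplication by $m$ as well.

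Next, I would apply Lemma~\ref{lemma-surface} to $\Gamma$ (which applies since any $\alpha\in\Br(X)[\ell^\infty]$ has period a power of $\ell$, hence invertible in $k$) to obtain constants $C$ and $N$ such that for every $\beta\in\Br(Y)$ with $\per(\beta)$ an $\ell$-power, $\ind(\Gamma^*(\beta))$ divides $C\cdot\per(\beta)^N$. Specializing to $\beta=i^*(\alpha)$ with $\alpha\in\Br(X)[\ell^\infty]$ yields that $\ind(m\alpha)$ divides $C\cdot\per(\alpha)^N$, so by Lemma~\ref{lemma-index-defs} there is a field extension $L_\alpha/k(X)$ of degree dividing $C\cdot\per(\alpha)^N$ over which $\alpha$ has period dividing $m$. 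Since $k(X)$ (and hence $L_\alpha$) is $C_{\dim(X)}$, Matzri's theorem (Theorem~\ref{theorem-matzri}) produces a further extension $M_\alpha/L_\alpha$ of degree $d$ depending only on $\dim(X)$ and $m$ splitting $\alpha\vert_{L_\alpha}$. Therefore $\ind(\alpha)$ divides $dC\cdot\per(\alpha)^N$.

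Finally, choose $e = \lceil N + \log_2(dC)\rceil$, so that $\ind(\alpha)\leq\per(\alpha)^e$ for all $\alpha\in\Br(X)[\ell^\infty]$. Because $\ind(\alpha)$ and $\per(\alpha)$ are both powers of $\ell$, the inequality automatically upgrades to divisibility (no appeal to Lemma~\ref{lemma-index-bound-divisible} is needed here). The dependence of $e$ on the advertised parameters follows by tracking: $m$ depends only on the denominators of the $a_i$ and on $|\rH^2_{\et}(X,\bZ_\ell)_{\tors}|$ and $|\rH^3_{\et}(X,\bZ_\ell)_{\tors}|$; $d$ depends only on $\dim(X)$ and $m$; and $C,N$ depend only on the set of nonzero $\deg(Z_i/X)$ with multiplicity by Lemma~\ref{lemma-surface}. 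I do not expect any real obstacle; the only place that requires mild care is verifying that Lemma~\ref{lemma-pairing-Br} transfers the action of $\Gamma^*\circ i^*$ from $\rH^2_{\et}(X,\bmu_{\ell^n})$ to $\Br(X)[\ell^n]$ for every $n$ simultaneously, which is immediate from the functoriality of that lemma in $n$.
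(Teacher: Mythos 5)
Your proof is correct and matches the paper's approach: the paper's own proof of this proposition simply says it is ``identical to that of Proposition~\ref{proposition-cycle}, up to replacing singular cohomology with $\bZ$-coefficients by \'{e}tale cohomology with $\bZ_{\ell}$-coefficients,'' and you have spelled out exactly that translation. The one small (and valid) simplification you make at the end---observing that since both $\ind(\alpha)$ and $\per(\alpha)$ are $\ell$-powers the inequality $\ind(\alpha)\leq\per(\alpha)^e$ directly upgrades to divisibility without invoking Lemma~\ref{lemma-index-bound-divisible}---is entirely sound.
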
 

\begin{proof}
The proof is identical to that of Proposition~\ref{proposition-cycle}, up to replacing singular cohomology with $\bZ$-coefficients by \'{e}tale cohomology with $\bZ_{\ell}$-coefficients. 
\end{proof}

Now we prove the absolute version of Theorem~\ref{theorem-pi-bound} in our context. 
\begin{theorem}
\label{theorem-pi-bound-absolute-k} 
Let $X$ be a smooth projective variety over an algebraically closed field $k$. 
Assume that the $\ell$-independent
Lefschetz standard conjecture in degree $2$, i.e. Conjecture~\ref{conjecture-l-lefschetz} for $i = 2$, holds for $X$. Then Conjecture~\ref{conjecture-pi-bound} holds for $X$, i.e. there exists a positive integer~$e$ such that for all $\alpha \in \Br(X)$, 
$\ind(\alpha)$ divides $\per(\alpha)^e$. 
\end{theorem}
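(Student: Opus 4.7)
The plan is to follow the strategy of Theorem~\ref{theorem-pi-bound-absolute} as closely as possible, substituting Proposition~\ref{proposition-cycle-k} for Proposition~\ref{proposition-cycle}, and to deal with the finitely many ``bad'' primes via Matzri's theorem. First, I would assume $n = \dim(X) \geq 3$ (otherwise Theorem~\ref{theorem-pi-surface} applies), fix a projective embedding of $X$ with hyperplane class $h$, and take $i \colon Y \hookrightarrow X$ to be a smooth complete intersection surface cut out by $n-2$ hyperplane sections, so that $i_* \circ i^* = (-) \cup h^{n-2}$. The $\ell$-independent Lefschetz standard conjecture in degree $2$ supplies a single cycle $\lambda \in \CH^2(X \times X)_\bQ$ whose associated correspondence inverts $L_\ell^{n-2}$ on $\rH^2_{\et}(X, \bQ_\ell)$ \emph{for every} prime $\ell \neq \characteristic(k)$. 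Setting $\zeta = \Gamma_i^t \circ \lambda \in \CH^2(X \times Y)_\bQ$ and using Remark~\ref{remark-correspondence-H}, we then have $\zeta^* \circ i^* = \id$ on $\rH^2_{\et}(X, \bQ_\ell)$ simultaneously for all such~$\ell$.

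The key point is now to extract from Proposition~\ref{proposition-cycle-k} a uniform exponent as $\ell$ varies. Applying that proposition for each $\ell \neq \characteristic(k)$ yields an integer $e_\ell$ such that $\ind(\alpha)$ divides $\per(\alpha)^{e_\ell}$ for all $\alpha \in \Br(X)[\ell^{\infty}]$. By the ``moreover'' clause of Proposition~\ref{proposition-cycle-k}, $e_\ell$ depends only on $\dim(X)$, the least common multiple of the denominators of the coefficients of $\zeta$, the multiset of degrees $\deg(Z_i/X)$ of its components, and the torsion orders of $\rH^2_{\et}(X, \bZ_\ell)$ and $\rH^3_{\et}(X, \bZ_\ell)$. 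The first three quantities are manifestly $\ell$-independent, and the last is trivial for all but finitely many primes, since $\rH^{2,3}_{\et}(X, \bZ_\ell)$ is torsion-free for almost all $\ell$ (a standard consequence of the fact that $\rH^*_{\et}(X, \widehat{\bZ})_{\tors}$ has bounded order). Hence there exists a finite set $S$ of primes (containing $\characteristic(k)$ if positive and all primes $\ell$ for which $\rH^{2,3}_{\et}(X, \bZ_\ell)$ has nontrivial torsion) and a single integer $e_0$ so that $\ind(\alpha) \mid \per(\alpha)^{e_0}$ for every $\alpha \in \Br(X)[\ell^{\infty}]$ with $\ell \notin S$.

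For the primes in $S$, I would invoke Matzri's theorem (Theorem~\ref{theorem-matzri}): since $k(X)$ has transcendence degree $\dim(X)$ over $k$ it is $C_{\dim(X)}$ by Tsen, so there exists $e_1$ with $\ind(\alpha) \mid \per(\alpha)^{e_1}$ for every $\alpha \in \bigoplus_{p \in S} \Br(k(X))[p^{\infty}]$. Setting $e = \max(e_0, e_1)$ and decomposing any Brauer class into its $p$-primary components via Lemma~\ref{lemma-index-bound-divisible}, we conclude that $\ind(\alpha) \mid \per(\alpha)^e$ for all $\alpha \in \Br(X)$. The only nontrivial wrinkle compared to the complex case is the need to exploit the independence of $\ell$ to obtain a \emph{single} cycle $\lambda$ that simultaneously bounds the index of every $\ell$-primary part outside a finite exceptional set; once this is in hand, the finitely many exceptional primes (and, in positive characteristic, the characteristic itself, which is not covered by Proposition~\ref{proposition-cycle-k}) are absorbed by the general-purpose bound of Matzri.
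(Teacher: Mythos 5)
Your proof is correct and follows essentially the same strategy as the paper's: take $\zeta = \Gamma_i^t \circ \lambda$, exploit the $\ell$-independence of the Lefschetz cycle together with Gabber's torsion-freeness of $\rH^*_{\et}(X,\bZ_\ell)$ for almost all $\ell$, and cover the remaining primes via Matzri, concluding by Lemma~\ref{lemma-index-bound-divisible}. The only cosmetic difference is that you route the finitely many torsion primes through Matzri's theorem as well, whereas the paper applies Proposition~\ref{proposition-cycle-k} to all $\ell \neq \characteristic(k)$ (including those with torsion) and reserves Matzri for $\ell = \characteristic(k)$ alone; both choices are equally valid.
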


\begin{remark}
\label{remark-almost-all-l-suffice}
As the proof below shows, at the expense of needing a possibly larger value of $e$, the same conclusion holds if instead of assuming the $\ell$-independent Lefschetz standard conjecture in degree $2$ as formulated in Conjecture~\ref{conjecture-l-lefschetz}, we assume the weaker variant where $\lambda^*$ is only required to be inverse to $L^{n-2}_{\ell}$ for \emph{almost all} $\ell$. 
\end{remark}

\begin{proof}
We follow the proof of Theorem~\ref{theorem-pi-bound-absolute} with some modifications. 
Let $n$ be the dimension of $X$, which we may assume is at least $3$. 
Let $X \subset \bP^r$ be a closed immersion, and let $i \colon Y \to X$ be a smooth surface given as a complete intersection of $n-2$ hyperplane sections of $X$. 
If $h$ denotes the hyperplane class on $X$, then for any prime $\ell \neq \characteristic(k)$ we have 
\begin{equation*}
    i_* \circ i^* = (-) \cup h^{n-2} \colon  \rH^2_{\et}(X, \bQ_{\ell}) \to \rH^{2n-2}_{\et}(X, \bQ_{\ell}). 
\end{equation*}
By the Lefschetz standard conjecture in degree $2$, there exists a cycle $\lambda \in \CH^2(X \times X)_{\bQ}$ such that 
$\lambda^* \colon \rH^{2n-2}_{\et}(X, \bQ_{\ell}) \to \rH^2_{\et}(X, \bQ_{\ell})$ is inverse to $i_* \circ i^*$ for all $\ell \neq \characteristic(k)$. 
Therefore, 
\begin{equation*}
    \zeta \coloneqq \Gamma_i^{t} \circ \lambda \in \CH^2(X \times Y)_{\bQ}
\end{equation*}
is a cycle so that $\zeta^* \circ i^* = \id_{\rH^2_{\et}(X, \bQ_{\ell})}$ for $\ell \neq \characteristic(k)$ (see Remark~\ref{remark-correspondence-H}). 
Thus by Proposition~\ref{proposition-cycle-k} there exists a positive integer $e_{\ell}$ such that $\ind(\alpha)$ divides $\per(\alpha)^{e_{\ell}}$ for all $\alpha \in \Br(X)[\ell^{\infty}]$, 
where $e_{\ell}$ only depends on the orders of $\rH^2_{\et}(X, \bZ_{\ell})_{\mathrm{tors}}$ and $\rH^3_{\et}(X, \bZ_{\ell})_{\mathrm{tors}}$ and other data that does not depend on $\ell$ (namely, $\dim(X)$ and numbers associated to the expression of $\zeta$ as a $\bQ$-linear combination of classes of subvarieties). 

By \cite{gabber-torsion} for almost all $\ell$ the cohomology $\rH^*_{\et}(X, \bZ_{\ell})$ is torsion free, so we may find a single positive integer $f$ such that for all primes $\ell \neq \characteristic(k)$ and $\alpha \in \Br(X)[\ell^{\infty}]$,  $\ind(\alpha)$ divides $\per(\alpha)^{f}$. 
On the other hand, at the prime $p = \characteristic(k)$, Theorem~\ref{theorem-matzri} gives an integer $f_p$ (explicitly, $f_p = p^{\dim(X) - 1}-1$) such that for all $\alpha \in \Br(X)[p^{\infty}]$, $\ind(\alpha)$ divides $\per(\alpha)^{f_p}$. 
Thus setting $e = \max\set{f,f_p}$, we conclude by Lemma~\ref{lemma-index-bound-divisible} that for all $\alpha \in \Br(X)$, $\ind(\alpha)$ divides $\per(\alpha)^e$. 
\end{proof}

\subsection{Relative case} 
Theorem~\ref{theorem-pi-bound-absolute-k} implies the following relative statement. 

\begin{theorem}
\label{theorem-pi-bound-relative-k}
Let $X \to S$ be a smooth proper morphism of varieties over an algebraically closed field.
Assume that the geometric generic fiber of $X \to S$ is projective and satisfies the $\ell$-independent Lefschetz standard conjecture in degree $2$.  
Then there exists a positive integer~$e$ such that for all geometric points $\bar{s}$ of $S$ and $\alpha \in \Br(X_{\bar{s}})$, 
$\ind(\alpha)$ divides $\per(\alpha)^e$. 
\end{theorem}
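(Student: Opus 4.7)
The approach mirrors the proof of Theorem~\ref{theorem-pi-bound} in \S\ref{section-relative-case}, with two adjustments to accommodate arbitrary characteristic. First, the complex-analytic identification of a very general fiber with the geometric generic fiber is unavailable here, so we instead spread out the Brauer class from the special fiber to an \'{e}tale neighborhood and specialize its index from the geometric generic fiber, using Lemmas~\ref{lemma-extend-brauer} and~\ref{lemma-specialize-index} in tandem. Second, since Lemma~\ref{lemma-extend-brauer} requires the period to be invertible in $k$, in positive characteristic the $p$-primary component of the Brauer group must be treated separately via Matzri's bound.

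We may assume $X \to S$ is surjective, since its image is open and closed in the irreducible base. Let $\bar\eta$ denote the geometric generic point of $S$. By hypothesis, $X_{\bar\eta}$ is a smooth projective variety satisfying the $\ell$-independent Lefschetz standard conjecture in degree~$2$, so Theorem~\ref{theorem-pi-bound-absolute-k} yields an integer $e_0$ with $\ind(\beta) \mid \per(\beta)^{e_0}$ for every $\beta \in \Br(X_{\bar\eta})$. Now fix a geometric point $\bar s$ of $S$ and a class $\alpha \in \Br(X_{\bar s})$ with $\per(\alpha)$ invertible in $k$. Lemma~\ref{lemma-extend-brauer}, applied after replacing the resulting \'{e}tale neighborhood by the component through $\bar u$, produces a connected \'{e}tale neighborhood $(U,\bar u) \to (S,\bar s)$ and a class $\tilde\alpha \in \Br(X_U)$ with $\tilde\alpha_{\bar u} = \alpha$ and $\per(\tilde\alpha_t) \mid \per(\alpha)$ for all $t \in U$. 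Since $k(U)$ is a finite separable extension of $k(S)$, we have $\overline{k(U)} \cong \overline{k(S)}$, and the geometric generic fiber of $X_U \to U$ is identified with $X_{\bar\eta}$. Applying Lemma~\ref{lemma-specialize-index} to $\tilde\alpha$ on the smooth surjective morphism $X_U \to U$ gives
\begin{equation*}
\ind(\alpha) \;=\; \ind(\tilde\alpha_{\bar u}) \;\Big|\; \ind(\tilde\alpha_{\bar\eta_U}) \;\Big|\; \per(\tilde\alpha_{\bar\eta_U})^{e_0} \;\Big|\; \per(\alpha)^{e_0},
\end{equation*}
where the third divisibility applies Theorem~\ref{theorem-pi-bound-absolute-k} on $X_{\bar\eta}$.

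When $\characteristic(k) = p > 0$, the $p$-primary part must be handled separately. Here the function field $k(X_{\bar s})$ is $C_d$ over the algebraically closed field $k$, where $d = \dim(X_{\bar s})$ coincides with the relative dimension of $X \to S$ and is therefore independent of $\bar s$. Theorem~\ref{theorem-matzri} then supplies an integer $e_1 = p^{d-1}-1$, uniform in $\bar s$, such that $\ind(\alpha) \mid \per(\alpha)^{e_1}$ for every $\alpha \in \Br(X_{\bar s})[p^\infty]$. Setting $e = \max\{e_0, e_1\}$ and combining primary components via Lemma~\ref{lemma-index-bound-divisible} concludes the argument. The step requiring the most care is verifying that a single exponent $e_0$ works uniformly across all geometric points $\bar s$; this is the content of the observation that any \'{e}tale neighborhood $U$ supplied by Lemma~\ref{lemma-extend-brauer} has the same geometric generic fiber $X_{\bar\eta}$ as $S$ itself, so Theorem~\ref{theorem-pi-bound-absolute-k} need only be invoked once, on a fixed variety.
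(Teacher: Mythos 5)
Your proof is correct and follows essentially the same approach as the paper's: spread the Brauer class from the special fiber to an \'{e}tale neighborhood (Lemma~\ref{lemma-extend-brauer}), specialize its index from the geometric generic fiber (Lemma~\ref{lemma-specialize-index}), invoke Theorem~\ref{theorem-pi-bound-absolute-k} on that fixed fiber, and handle the $p$-primary part via Matzri's bound and Lemma~\ref{lemma-index-bound-divisible}. The paper compresses all of this into two sentences by referring back to the proof of Theorem~\ref{theorem-pi-bound}; you have usefully filled in the details, in particular making explicit why the exponent $e_0$ from the geometric generic fiber is uniform over all geometric points $\bar s$.

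One small remark: you attribute the difference between the two relative theorems to the unavailability of the complex-analytic identification of a very general fiber with the geometric generic fiber. In fact the real difference is in the hypotheses, not the argument: Theorem~\ref{theorem-pi-bound} assumes the Lefschetz conjecture for the \emph{very general} fiber (which makes sense only over $\bC$ and requires the Vial-type identification to convert to the geometric generic fiber), while Theorem~\ref{theorem-pi-bound-relative-k} assumes it directly for the geometric generic fiber, so that step is simply not needed here. The specialization machinery (Lemmas~\ref{lemma-extend-brauer} and~\ref{lemma-specialize-index}) is identical in both cases.
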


\begin{proof}
The same argument as in our proof of Theorem~\ref{theorem-pi-bound} in \S\ref{section-relative-case} provides an $f$ such that for all geometric points $\bar{s}$ of $S$ and $\alpha \in \Br(X_{\bar{s}})$ with $\per(\alpha)$ prime to $p = \characteristic(k)$, 
$\ind(\alpha)$ divides $\per(\alpha)^{f}$. 
Then if $r$ is the relative dimension of $X \to S$, the integer $e = \max \set{ f, p^{r-1} - 1}$ satisfies the conclusion of the theorem, by the same argument as at the end of the proof of Theorem~\ref{theorem-pi-bound-absolute-k}. 
\end{proof}


\section{Lower bounds on the index over the complex numbers} 
\label{section-lower-bound}
In this section, we prove Theorem~\ref{theorem-lower-bound} in \S\ref{section-obstruction}, after some preparatory results on Severi--Brauer varieties. 
Then we discuss applications to potential period-index conjecture obstructions in \S\ref{section-obstruction-PI}, the construction of Brauer classes of large index in \S\ref{section-large-index}, and counterexamples to the integral Hodge conjecture in \S\ref{section-IHC}. 
In \S\ref{section-index-via-0-cycles} we discuss an auxiliary divisibility obstruction for the index of Brauer classes, which is used in \S\ref{section-IHC}.

\subsection{The index in terms of Severi--Brauer varieties} 
We start by reviewing some standard results about Severi--Brauer varieties; see for instance \cite{GS, kollar} for more details. 

Let $K$ be a field. 
A \emph{Severi--Brauer variety} over $K$ is a twisted form of a projective space, i.e. a variety $P$ over $K$ such that $P_{\bar{K}} \cong \bP_{\bar{K}}^{\dim(P)}$. 
The set of Severi--Brauer varieties over $K$ of dimension $r$ up to isomorphism is in canonical bijection with $\rH^1(K, \PGL_{r+1})$, and hence also with the set of central simple algebras over $K$ of degree $r+1$ up to isomorphism. In particular, for any Severi--Brauer variety $P$, there is an associated class $[P] \in \Br(K)$, and any $\alpha \in \Br(K)$ arises (non-uniquely) in this way. 

A \emph{twisted linear subvariety} of $P$ 
is a closed subvariety $L \subset P$ such that over the algebraic closure $L_{\bar{K}} \subset P_{\bar{K}} \cong \bP_{\bar{K}}^{\dim(P)}$ is a nonempty linear subspace. 
Note that $L$ is then itself a Severi--Brauer variety. 
Under the correspondence with central simple algebras, the existence of a twisted linear embedding corresponds to taking matrix algebras: if $A$ and $B$ are central simple algebras corresponding to Severi--Brauer varieties $P$ and $Q$, then there is an isomorphism $\rM_{r}(A) \cong B$ for some $r$ if and only if there is an embedding $P \hookrightarrow Q$ as a twisted linear subvariety. 
This leads to a description of the dimensions of twisted linear subvarieties of a Severi--Brauer variety, cf. \cite[Lemma 27]{kollar}. 

\begin{lemma}
\label{lemma-twisted-linear}
Let $m$ be the minimal dimension of a twisted linear subvariety of $P$. 
Then $m+1$ divides $\dim(L) + 1$ for any twisted linear subvariety $L \subset P$, and there exists a twisted linear subvariety $L \subset P$ with $\dim(L) + 1= i(m+1)$ for any $1 \leq i \leq \frac{\dim(P)+1}{m+1}$. 
\end{lemma}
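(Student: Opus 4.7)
The plan is to translate the geometric problem about twisted linear subvarieties into algebra via the correspondence between Severi--Brauer varieties and central simple algebras that was just recalled, and then exploit the Wedderburn structure theorem. Let $A$ be the central simple algebra over $K$ corresponding to $P$, so $\dim(P)+1 = \deg(A)$, and write $A \cong \rM_n(D)$ where $D$ is the unique central division algebra in the Brauer class $[P]$. Then $\deg(A) = n \cdot \deg(D)$.

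Next I would use the stated correspondence: twisted linear subvarieties $L \subset P$ are in bijection with isomorphism classes of central simple algebras $B$ over $K$ admitting an isomorphism $\rM_r(B) \cong A$ for some $r \geq 1$, with $\dim(L)+1 = \deg(B)$. By Wedderburn, any such $B$ is of the form $\rM_s(D')$ for some central division algebra $D'$ over $K$, and $\rM_r(B) \cong A$ forces $\rM_{rs}(D') \cong \rM_n(D)$; uniqueness of the division algebra factor then gives $D' \cong D$ and $rs = n$. In particular $B \cong \rM_s(D)$ for some divisor $s$ of $n$, and conversely every such $s$ arises by taking $B = \rM_s(D)$.

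Reading off the degrees, the twisted linear subvarieties of $P$ have dimensions exactly of the form $s \cdot \deg(D) - 1$ for $s$ running over the divisors of $n$ (equivalently, for $1 \leq s \leq n$, since we only need $rs = n$ to be solvable in positive integers, which is equivalent to $s \mid n$; but the statement of the lemma actually ranges over all $1 \leq i \leq n$, so we in fact want $s = 1, 2, \dots, n$). The minimal dimension is attained for $s = 1$, giving $m = \deg(D) - 1$, i.e. $m+1 = \deg(D)$. Thus for every twisted linear subvariety $L$ we have $\dim(L)+1 = s(m+1)$ for some $s \in \{1,\dots,n\}$, proving the divisibility, and for each such $s$ there exists a twisted linear subvariety of the correct dimension, which gives the existence statement with $i = s$ ranging over $1 \leq i \leq n = (\dim(P)+1)/(m+1)$.

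The only subtle step is the surjectivity assertion, i.e. that every $1 \leq i \leq n$ (not merely the divisors of $n$) is realized. The point is that $B = \rM_i(D)$ is a central simple algebra of degree $i \cdot \deg(D)$ dividing $n \cdot \deg(D) = \deg(A)$, and one checks via the right-module interpretation --- or directly from $\rM_n(D) \cong \rM_{n/i}(\rM_i(D))$ when $i \mid n$, together with the fact that the set of dimensions of twisted linear subvarieties depends only on the Brauer class of $P$ and is closed under taking twisted linear subvarieties of twisted linear subvarieties --- that any such $B$ embeds into $A$ in the appropriate sense. So the main (mild) obstacle is confirming that the correspondence in the paragraph preceding the lemma realizes \emph{all} matrix degrees of $D$ up to $n$ as twisted linear subvarieties, which follows from the concrete model $P = \mathrm{SB}(A)$ together with the observation that $\mathrm{SB}(\rM_i(D)) \hookrightarrow \mathrm{SB}(\rM_n(D))$ via the inclusion $\rM_i(D) \hookrightarrow \rM_n(D)$ in the upper-left block.
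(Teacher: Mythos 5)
The paper does not actually prove this lemma; it cites Koll\'{a}r's article, so I will assess your argument on its own terms. Your overall approach is sound, and the upper-left block embedding at the end is exactly the ingredient needed to realize every $1 \leq i \leq n$. However, there is a genuine gap on the divisibility side. You are treating the statement extracted from the prose preceding the lemma---that twisted linear subvarieties $L \subset P$ correspond bijectively to algebras $B$ with $\rM_r(B) \cong A$---as an established biconditional, but it is false in the direction you need for divisibility. For instance, $\bP^1 \subset \bP^2$ is a twisted linear subvariety, yet there is no $r$ with $\rM_r(\rM_2(K)) \cong \rM_3(K)$, since that would require $2r = 3$. So the Wedderburn analysis starting from ``$\rM_r(B) \cong A$'' only covers twisted linear subvarieties of dimension $s\deg(D)-1$ with $s \mid n$, not all of them, and the assertion ``for every twisted linear subvariety $L$ we have $\dim(L)+1 = s(m+1)$'' is not justified by this route. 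You correctly flag that this biconditional only produces divisors of $n$ when arguing existence and repair that; the divisibility half has the same defect and you do not repair it there.

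The statement is still true, and the cleaner route is already implicit in the paper's surrounding text: any twisted linear subvariety $L \subset P$ is itself a Severi--Brauer variety, and it has the same Brauer class as $P$ (the tautological twisted line bundle on $P$ restricts to one on $L$). Hence $\dim(L)+1 = \deg(C)$ for some central simple algebra $C$ Brauer-equivalent to $A$, so by Wedderburn $C \cong \rM_s(D)$ and $\deg(D) \mid \dim(L)+1$. Together with your upper-left block embedding $\SB(\rM_i(D)) \hookrightarrow \SB(\rM_n(D))$ for all $1 \leq i \leq n$, which in particular gives $m+1 = \deg(D)$, this establishes both halves of the lemma. In short, your existence argument is right, but the divisibility argument should be rerouted through the fact that $L$ is a Severi--Brauer variety of the same Brauer class, rather than through the false converse of the matrix-algebra criterion.
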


The index of a Brauer class can be computed in terms of the geometry of Severi--Brauer varieties. 

\begin{lemma}
\label{lemma-index-SB}
Let $K$ be a field and $\alpha \in \Br(K)$. 
Then $\ind(\alpha)$ is computed by 
\begin{enumerate}
    \item $\min \set{ \dim(P) + 1 \st P \text{ is a Severi--Brauer variety of class } \alpha }$. 
\end{enumerate}
Further, if $P$ is any Severi--Brauer variety of class $\alpha$, then $\ind(\alpha)$ is computed by any of the following: 
\begin{enumerate}[resume]
    \item \label{index-0-cycle} 
    $\min \set{ \deg(Z) \st Z \text{ is a $0$-cycle of positive degree on } P}$,   
    \item $\min \set{ \dim(L)+1 \st \text{$L \subset P$ is a twisted linear subvariety of $P$}}$, or 
    \item $\min \set{ \codim_P(L) \st \text{$L \subsetneq P$ is a proper twisted linear subvariety of $P$}}$. 
\end{enumerate}
One may also replace $\min$ with $\gcd$ in any of the above descriptions. 
\end{lemma}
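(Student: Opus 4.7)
The plan is to exploit the equivalence between Severi--Brauer varieties over $K$ of class $\alpha$ and central simple algebras (CSAs) over $K$ of class $\alpha$, together with Lemmas \ref{lemma-index-defs} and \ref{lemma-twisted-linear}. Under this equivalence a Severi--Brauer variety $P$ of class $\alpha$ corresponds to a CSA $A_P$ with $\deg(A_P) = \dim(P) + 1$, and any such $A_P$ has the form $M_r(D)$ for the unique division algebra $D$ of class $\alpha$. Hence $\dim(P) + 1 = r \cdot \ind(\alpha)$ for some positive integer $r$, which already yields (1) and its $\gcd$ variant.

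For (3) and (4), fix $P$ of class $\alpha$ with corresponding $A_P = M_s(D)$. Twisted linear subvarieties $L \subset P$ correspond to matrix subalgebras $M_t(D) \hookrightarrow M_s(D)$, and by Lemma \ref{lemma-twisted-linear} every value $1 \leq t \leq s$ is realized with $\dim(L) + 1 = t \cdot \ind(\alpha)$. The minimum dimension is attained at $t = 1$ and equals $\ind(\alpha) - 1$, which gives (3); the maximum proper value of $t$ is $s - 1$, whence the minimum codimension of a proper twisted linear subvariety is $\dim(P) + 1 - (s - 1)\ind(\alpha) = \ind(\alpha)$, which gives (4). Since every realizable value of $\dim(L) + 1$ and of $\codim_P(L)$ is a positive multiple of $\ind(\alpha)$, the $\min$ and $\gcd$ agree in both cases.

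For (2) I would translate between closed points of $P$ and splitting fields of $\alpha$. Any closed point $z \in P$ has residue field $k(z)$ over which $P$ acquires a rational point, so $\alpha|_{k(z)} = 0$ and $k(z)$ splits $\alpha$; the standard fact that $\ind(\alpha)$ divides the degree of any finite splitting extension (separable or not) then yields $\ind(\alpha) \mid \deg(z)$, and by additivity of degree $\ind(\alpha)$ divides $\deg(Z)$ for every positive-degree $0$-cycle $Z$ on $P$. Conversely, Lemma \ref{lemma-index-defs} produces a separable splitting extension $L/K$ of degree $\ind(\alpha)$; pulling back an $L$-point of $P_L$ gives a closed point of $P$ of degree dividing $\ind(\alpha)$, which combined with the previous divisibility must have degree exactly $\ind(\alpha)$. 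Thus the $\min$ and $\gcd$ statements in (2) both equal $\ind(\alpha)$. The main (minor) obstacle is that residue fields at closed points of $P$ need not be separable over $K$ when $\per(\alpha)$ is not invertible in $K$; this is handled by the divisibility $\ind(\alpha) \mid [L:K]$ for an arbitrary finite splitting extension $L/K$, which is a standard consequence of the double divisibility $\ind(\alpha_L) \mid \ind(\alpha) \mid [L:K] \cdot \ind(\alpha_L)$ specialized to $\alpha_L = 0$.
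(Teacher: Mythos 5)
The paper does not give its own proof of this lemma---it simply cites Koll\'ar's \emph{Severi--Brauer varieties: a geometric treatment}---so you have supplied a genuine argument where the paper defers to the literature. Your overall strategy, going through the dictionary between Severi--Brauer varieties and central simple algebras and between closed points and splitting fields, is sound; parts (1) and (2) are correct and complete, including the careful handling of possibly inseparable residue fields via the divisibility $\ind(\alpha)\mid [L:K]\cdot\ind(\alpha_L)$.

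There is, however, a small circularity in your treatment of (3). You write that by Lemma~\ref{lemma-twisted-linear} every value $1\le t\le s$ is realized with $\dim(L)+1 = t\cdot\ind(\alpha)$, but Lemma~\ref{lemma-twisted-linear} is phrased in terms of the \emph{minimal} dimension $m$ of a twisted linear subvariety and gives realizability of the multiples $i(m+1)$. To convert that into your claim you need $m+1=\ind(\alpha)$---which is precisely statement (3), the thing you are proving. What you have established from the CSA dictionary is that every twisted linear $L\subset P$ has $\dim(L)+1 = t\cdot\ind(\alpha)$ for some integer $t\ge 1$ (so $m+1$ is a multiple of $\ind(\alpha)$), but a priori $m+1$ could be a proper multiple. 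The missing step is to exhibit a twisted linear subvariety of dimension $\ind(\alpha)-1$; this follows directly from the correspondence stated in the paper just before Lemma~\ref{lemma-twisted-linear}: since $\rM_s(D)\cong A_P$, the Severi--Brauer variety $\bP(D)$ embeds into $P$ as a twisted linear subvariety of dimension $\deg(D)-1 = \ind(\alpha)-1$. With that sentence added, (3) is established, Lemma~\ref{lemma-twisted-linear} then legitimately yields that all $t\in\{1,\dots,s\}$ occur, and your derivation of (4) and of the $\gcd$ variants goes through.

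A minor terminological quibble: twisted linear subvarieties of $P$ correspond to right ideals of $A_P$ (or, equivalently, to pairs consisting of an abstract Severi--Brauer variety $\bP(\rM_t(D))$ together with an embedding), not to ``matrix subalgebras $\rM_t(D)\hookrightarrow \rM_s(D)$''; the latter phrasing does not quite parametrize the subvarieties. This does not affect the substance of your argument once the gap above is filled.
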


\begin{proof}
See for instance \cite[\S3 and \S6]{kollar}. 
\end{proof}

There is also a relative notion of a Severi--Brauer variety. 
If $X$ is a scheme, a Severi--Brauer scheme over $X$ is a morphism of schemes $P \to X$ which is locally in the \'{e}tale topology on $X$ isomorphic to a projective space $\bP_{X}^{r}$ over $X$. 
The set of Severi--Brauer schemes over $X$ of relative dimension $r$ up to isomorphism 
is in canonical bijection with $\rH^1_{\et}(X, \PGL_{r+1})$, and hence also with the set of Azumaya algebras over $X$ of degree $r+1$ up to isomorphism. 
In particular, for any Severi--Brauer scheme over $X$, there is an associated class $[P] \in \Br(X)$. 

\begin{remark}
\label{remark-Oalpha1}
If $\alpha \in \Br(X)$ and $\pi \colon P \to X$ is a Severi--Brauer scheme of class $\alpha$, 
then there is a $\pi^*\alpha$-twisted line bundle $\cO_{P}^{\alpha}(1)$ on $P$, whose restriction to any geometric fiber $P_{\bar{x}} \cong \bP^{r}$ is identified with the standard $\cO_{\bP^{r}}(1)$ line bundle. 
Here and below, we use the C\u{a}ld\u{a}raru model for twisted sheaves, whereby an $\alpha$-twisted sheaf is defined in terms of a choice (usually suppressed in notation) of cocycle for $\alpha$, cf. the discussion preceding Lemma~\ref{lemma-index-unramified}. 
\end{remark} 

\subsection{Topologically trivial Brauer classes}
Let $X$ be a complex variety. 
We write $\Brtop(X)$ for the \emph{topological Brauer group} of $X$, i.e. the Brauer group of the topological space underlying the analytification $X^{\an}$ of $X$. 
See \cite[I, \S1]{grothendieck-brauer} for background on Brauer groups of topological spaces, which are defined analogously to the case of schemes; note that by \cite[I, Th\'{e}or\`{e}me~1.6]{grothendieck-brauer}, the ``Azumaya'' or ``cohomological'' definitions of the topological Brauer group are equivalent. 
Similar to the case of schemes, for $\alpha \in \Brtop(X)$ we can consider $\alpha$-twisted sheaves on the topological space $X^{\an}$; see for instance \cite{lieblich-moduli-twisted} where the theory of twisted sheaves is set up on any ringed topos. 

\begin{definition}
For a complex variety $X$ and $\alpha \in \Br(X)$, we say that $\alpha$ is \emph{topologically trivial} if its image under the restriction map $\Br(X) \to \Brtop(X)$ vanishes. 
\end{definition}

\begin{lemma}
\label{lemma-top-triv} 
Let $X$ be a complex variety and $\alpha \in \Br(X)$. The following are equivalent: 
\begin{enumerate}
    \item \label{top-triv-1}
    The class $\alpha$ is topologically trivial. 

    \item \label{top-triv-3}
    The image of $\alpha$ under the composition 
    \begin{equation}
    \label{Br-Brtop-restriction}
 \Br(X) = \rH^2_{\et}(X, \bG_m)_{\tors} \to \rH^2(X^{\an}, \bG_m)_{\tors} \to \rH^3(X, \bZ)_{\tors} 
    \end{equation}
    vanishes, where the last map comes from the exponential sequence. 
    \item \label{top-triv-4}
    If $\alpha \in \Br(X)[n]$, there exists 
    a class $b \in \rH^2(X, \bZ)$ whose image under the map 
    \begin{equation*}
        \rH^2(X, \bZ) \xrightarrow{\exp(-/n)} 
        \rH^2(X^{\an}, \bmu_n) \cong \rH^2_{\et}(X, \bmu_n) \to \Br(X)[n] 
    \end{equation*}
    is equal to $\alpha$, where we write $\exp(-)$ for the usual exponential applied to $2\pi i(-)$.\footnote{We use this convention because we want to ease notation by suppressing Tate twists below.}
    \item \label{top-triv-5}
    If $\alpha \in \Br(X)[n]$, 
    there exists a lift $\theta \in \rH^2(X^{\an}, \bmu_n)$ of $\alpha \in \Br(X)$ and a $\theta$-twisted topological line bundle on $X^{\an}$. 
    In particular, there exists an $\alpha$-twisted topological line bundle on $X^{\an}$. 
\end{enumerate}
Further, if $\alpha \in \Br_{\Az}(X)$, so that it is represented by a Severi--Brauer variety, then the above conditions are also equivalent to: 
\begin{enumerate}[resume]
    \item \label{top-triv-2}
    For any Severi--Brauer variety $P \to X$ of class $\alpha$, there exists a topological complex vector bundle $E$ on $X^{\an}$ such that there is a homeomorphism $P \cong \bP_{X^{\an}}(E)$ of topological spaces. 
\end{enumerate}
\end{lemma}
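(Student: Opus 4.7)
The plan is to route all equivalences through cohomology on $X^{\an}$, using three short exact sequences: the topological exponential sequence $0 \to \bZ \to \bC^{\rtop} \to \bG_m^{\rtop} \to 0$, the topological Kummer sequence $0 \to \bmu_n \to \bG_m^{\rtop} \to \bG_m^{\rtop} \to 0$, and (for the Azumaya statement) the sequence $1 \to \bG_m^{\rtop} \to \mathrm{GL}_r^{\rtop} \to \PGL_r^{\rtop} \to 1$. The crucial input is that the sheaf $\bC^{\rtop}$ of continuous $\bC$-valued functions is soft, so its higher cohomology vanishes on $X^{\an}$; the exponential sequence then yields an isomorphism $\rH^2(X^{\an}, \bG_m^{\rtop}) \cong \rH^3(X, \bZ)$ under which $\Brtop(X)$ corresponds to $\rH^3(X, \bZ)_{\tors}$. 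Since the composition in~\eqref{Br-Brtop-restriction} factors through $\Brtop(X)$, this gives (1) $\Leftrightarrow$ (3).

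For (3) $\Leftrightarrow$ (4), I would use the Bockstein sequence attached to $0 \to \bZ \xrightarrow{\cdot n} \bZ \to \bmu_n \to 0$, which yields
\begin{equation*}
0 \to \rH^2(X, \bZ)/n \to \rH^2(X, \bmu_n) \to \rH^3(X, \bZ)[n] \to 0,
\end{equation*}
and combine it with the Kummer surjection $\rH^2(X, \bmu_n) \twoheadrightarrow \Br(X)[n]$ whose kernel $\Pic(X)/n$ lies inside $\rH^2(X, \bZ)/n$. Chasing any lift of $\alpha \in \Br(X)[n]$ to $\rH^2(X, \bmu_n)$, its image in $\rH^3(X, \bZ)[n]$ equals the composition in (3); this vanishes exactly when some lift of $\alpha$ comes from $\rH^2(X, \bZ)/n$, i.e.\ from some $b$ via $\exp(-/n)$, which is (4). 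For (4) $\Rightarrow$ (5) the class $b$ determines a topological line bundle $L$ (using the identification $\rH^2(X, \bZ) \cong \rH^1(X^{\an}, \bG_m^{\rtop})$ from the topological exponential sequence), and $\theta \coloneqq \exp(b/n) \in \rH^2(X^{\an}, \bmu_n)$ is the topological Kummer boundary of $L$, hence has zero image in $\rH^2(X^{\an}, \bG_m^{\rtop})$; but this image is precisely the obstruction to the existence of a $\theta$-twisted topological line bundle (concretely, extract topological $n$-th roots of transition functions of $L$, using divisibility of $\bC^*$). Conversely (5) $\Rightarrow$ (1) because the image of $\theta$ in $\rH^2(X^{\an}, \bG_m^{\rtop})$ is the topological image $\alpha_{\rtop}$, which must vanish if a $\theta$-twisted topological line bundle exists.

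Finally, for (1) $\Leftrightarrow$ (2), the short exact sequence of topological groups $1 \to \bG_m^{\rtop} \to \mathrm{GL}_r^{\rtop} \to \PGL_r^{\rtop} \to 1$ induces a coboundary $\rH^1(X^{\an}, \PGL_r^{\rtop}) \to \rH^2(X^{\an}, \bG_m^{\rtop})$ that sends the underlying topological class of a Severi--Brauer scheme $P$ (with $r = \dim(P) + 1$) to $\alpha_{\rtop}$. Thus $P \cong \bP_{X^{\an}}(E)$ topologically for some rank-$r$ topological complex vector bundle $E$ if and only if $\alpha_{\rtop} = 0$, which is (1); the conclusion holds for every $P$ of class $\alpha$ precisely because $\alpha_{\rtop}$ depends only on the class $\alpha$. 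The main obstacle throughout is bookkeeping: one must verify that the comparison maps from the algebraic/\'etale setting to the topological one are compatible with all the coboundaries and with the identification $\bZ/n \cong \bmu_n$ on $X^{\an}$, so that the various incarnations of ``$\alpha$'' appearing in these sequences are genuinely the same class. This is naturality, but deserves careful attention with signs.
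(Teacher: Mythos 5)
Your proof is correct and takes essentially the same route as the paper's: the topological exponential and Kummer sequences on $X^{\an}$ drive the equivalence of (1), (3), (4), (5), while the $\PGL_r^{\rtop}$ coboundary handles (2), which the paper dispatches with ``follows directly from the definitions.'' Minor stylistic differences aside---you re-derive Grothendieck's $\Brtop(X)\cong\rH^3(X,\bZ)_{\tors}$ from softness of $\bC^{\rtop}$ instead of citing \cite[Corollaire~1.7]{grothendieck-brauer}, and you close the cycle via (5)$\Rightarrow$(1) rather than the paper's (5)$\Rightarrow$(4) using $b = c_1(L^{\otimes n})$---the core ideas coincide.
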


\begin{proof}
By \cite[Corollaire~1.7]{grothendieck-brauer}  there is an isomorphism $\Brtop(X) \cong \rH^3(X, \bZ)_{\tors}$ under which $\Br(X) \to \Brtop(X)$ is identified with the map~\eqref{Br-Brtop-restriction}. 
This gives the equivalence of~\eqref{top-triv-1} and~\eqref{top-triv-3}. 

The equivalence of~\eqref{top-triv-3} and \eqref{top-triv-4} follows from the commutative diagram 
\begin{equation*}
\xymatrix{
\rH^2(X, \bZ) \ar[rr]^{\exp(-/n)} \ar[d]_{(-)/n} && \rH^2(X^{\an}, \bmu_n) \ar[rr] \ar[d] && \rH^3(X, \bZ)  \ar@{=}[d]  \\ 
\rH^2(X^{\an}, \cO_{X^{\an}}) \ar[rr]^{\exp} && \rH^2(X^{\an}, \bG_m) \ar[rr] && \rH^3(X, \bZ) 
}
\end{equation*} 
with exact rows, since $\rH^{2}(X^{\an}, \bmu_n)$ surjects onto $\Br(X)[n]$. 

Given a $\theta$-twisted topological line bundle $L$ on $X^{\an}$ as in \eqref{top-triv-5}, $L^{\otimes n}$ is an honest topological line bundle on $X^{\an}$. Then $b = c_1(L^{\otimes n}) \in \rH^2(X, \bZ)$ satisfies $\exp(b/n) = \theta \in \rH^2(X^{\an}, \bmu_n)$; indeed, this follows from (the topological analog of) \cite[Proposition 2.3.4.4]{lieblich-moduli-twisted}. 
Hence $b$ is a class as required in \eqref{top-triv-4}. 

To show~\eqref{top-triv-4} implies~\eqref{top-triv-5}, we first make some preliminary observations. Let 
\begin{equation*}
\Pic^{\mathrm{top}}(X) \coloneqq \rH^1(X^{\an}, (\cO_X^{\mathrm{top}})^{\times})
\end{equation*} 
denote the topological Picard group, where 
$\cO_X^{\mathrm{top}}$ is the sheaf of continuous $\bC$-valued functions on $X^{\an}$. 
There is an isomorphism 
\begin{equation*}
\Pic^{\mathrm{top}}(X) \xrightarrow{ \, c_1 \, } \rH^2(X, \bZ) 
\end{equation*} 
given by the first chern class of a topological line bundle $L$ on $X^{\an}$ (which can also be described as the boundary map for the topological exponential sequence). The topological Kummer sequence gives an exact sequence 
\begin{equation*}
\Pic^{\mathrm{top}}(X) \xrightarrow{ \, n \, } \Pic^{\mathrm{top}}(X) \xrightarrow{\delta} \rH^2(X^{\an}, \bmu_n). 
\end{equation*} 
Given $M \in \Pic^{\mathrm{top}}(X)$ there may not exist an $n$-th root, but the obstruction is measured by 
$\delta(M) \in \rH^2(X^{\an}, \bmu_n)$ and there does exist a $\delta(M)$-twisted line bundle $L$ such that $L^{\otimes n} \cong M$. 
Now given $b \in \rH^2(X, \bZ)$ as in \eqref{top-triv-4}, if we take $M$ to be the topological line bundle with $c_1(M) = b$ and $\theta = \delta(M)$, then the $n$-th root $L$ is a $\theta$-twisted topological line bundle as required by \eqref{top-triv-5}. 

Finally, the equivalence of~\eqref{top-triv-1} and~\eqref{top-triv-2} follows directly from the definitions.
\end{proof}

\begin{remark}
\label{remark-rational-B}
When $X$ is proper, then the analytic and algebraic Brauer groups coincide, i.e. the canonical map $\rH^2_{\et}(X, \bG_m)_{\tors} \to \rH^2(X^{\an}, \bG_m)_{\tors}$ is an isomorphism (see e.g. \cite[Proposition 1.3]{schroer}). 
Therefore, we have a well-defined map 
\begin{equation}
\label{exp} 
    \exp \colon \rH^2(X, \bQ) \to \Br(X). 
\end{equation}
Taking $B = b/n$, it is clear that condition~\eqref{top-triv-5} in Lemma~\ref{lemma-top-triv} is equivalent to: 
\begin{enumerate}\addtocounter{enumi}{5}
    \item  \label{top-triv-6}
    There exists a class $B \in \rH^2(X, \bQ)$ such that $\exp(B) = \alpha$. 
\end{enumerate}
\end{remark}

\begin{definition}
If $X$ is a complex variety and $\alpha \in \Br(X)$ is topologically trivial, then we call a class $b \in \rH^2(X, \bZ)$ as in~\eqref{top-triv-4} of Lemma~\ref{lemma-top-triv} an \emph{integral $B$-field of degree $n$} for $\alpha$. 
When $X$ is proper, we call $B \in \rH^2(X, \bQ)$ as in~\eqref{top-triv-6} above a \emph{rational $B$-field} for $\alpha$. 
\end{definition}

\begin{remark} 
\label{remark-top-O1}
If $P \to X$ is a Severi--Brauer variety whose class $\alpha \in \Br(X)$ is topologically trivial,  then by Lemma~\ref{lemma-top-triv}\eqref{top-triv-2} the map $\pi \colon P \to X$ is topologically a projective bundle, so there is a topological relative $\cO(1)$ line bundle. Explicitly, let $L$ be an $\alpha$-twisted topological line bundle on $X^{\an}$, which exists by Lemma~\ref{lemma-top-triv}\eqref{top-triv-5}; then 
\begin{equation} 
\label{Otop1}
\cO_{P}^{\mathrm{top}}(1) \coloneqq \cO_{P}^{\alpha}(1) \otimes \pi^*L^{-1}
\end{equation} 
is an (untwisted) topological line bundle on $P$, whose restriction to fibers $P_{x} \cong \bP^r$ is $\cO_{\bP^r}(1)$ as a topological line bundle. 
When we are given an integral $B$-field $b$ of degree $n$ for $\alpha$, we will always define $\cO_{P}^{\mathrm{top}}(1)$ using an $\alpha$-twisted topological line bundle $L$ corresponding to $b$ via Lemma~\ref{lemma-top-triv}, which satisfies $c_1(L^{\otimes n}) = b$. 
\end{remark}

\begin{lemma}
\label{lemma-cohomology-SB}
Let $X$ be a smooth proper complex variety. 
Let $\alpha \in \Br(X)$ be a topologically trivial class, with $b$ an integral $B$-field of degree $n$ for $\alpha$ and $B = b/n$ the corresponding rational $B$-field. 
Let $\pi \colon P \to X$ be a Severi--Brauer variety of class $\alpha$ and relative dimension $r$, with $h \in \rH^2(P, \bZ)$ the first Chern class of the topological line bundle $\cO^{\mathrm{top}}(1)$ from Remark~\ref{remark-top-O1}. 
\begin{enumerate}
\item For any integer $d$, there is an isomorphism of abelian groups 
\begin{equation}
\label{equation-H-P}
    \sum_j \pi^*(-) \cup h^j \colon \bigoplus_{j = 0}^r \rH^{d - 2j}(X, \bZ)  \xrightarrow{\, \sim \,} \rH^d(P, \bZ). 
\end{equation}
\item \label{nhb-algebraic}
The class $nh + \pi^*b \in \rH^2(P, \bZ)$ is algebraic. 

\item 
There is an isomorphism of rational Hodge structures 
\begin{equation}
\label{equation-Hodge-P}
\sum_j \pi^*(-) \cup (h+\pi^*B)^{j} \colon 
\bigoplus_{j=0}^r \rH^{d-2j}(X, \bQ)(-j) \xrightarrow{\, \sim \,} \rH^d(P, \bQ) . 
\end{equation}
\end{enumerate}
\end{lemma}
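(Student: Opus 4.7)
The plan is to treat the three parts in sequence, with all three leveraging the construction of $\cO_{P}^{\mathrm{top}}(1)$ in Remark~\ref{remark-top-O1}. For (1), I would invoke Lemma~\ref{lemma-top-triv}\eqref{top-triv-2} to realize $\pi \colon P \to X$ topologically as a projective bundle $\bP(E)$ for some complex topological vector bundle $E$ on $X^{\an}$. The class $h = c_1(\cO_{P}^{\mathrm{top}}(1))$ then restricts to the positive generator of $\rH^2(\bP^r, \bZ)$ on each fiber, since by construction $\cO_{P}^{\mathrm{top}}(1)|_{P_x} \cong \cO_{\bP^r}(1)$ as topological line bundles. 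Applying the Leray--Hirsch theorem (i.e., the topological projective bundle formula) yields the isomorphism~\eqref{equation-H-P}.

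For (2), I would unwind the definition $\cO_{P}^{\mathrm{top}}(1) = \cO_{P}^{\alpha}(1) \otimes \pi^*L^{-1}$, where $L$ is an $\alpha$-twisted topological line bundle with $c_1(L^{\otimes n}) = b$. Taking $n$-th tensor powers gives an identification
\[
\cO_{P}^{\mathrm{top}}(1)^{\otimes n} \otimes \pi^*L^{\otimes n} \cong \cO_{P}^{\alpha}(n),
\]
between an honest topological line bundle on the left and a $(n\pi^*\alpha)$-twisted algebraic line bundle on the right. Since $\alpha \in \Br(X)[n]$ we have $n\pi^*\alpha = 0$, so $\cO_P^{\alpha}(n)$ is an honest algebraic line bundle on $P$. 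Passing to first Chern classes yields $nh + \pi^*b = c_1(\cO_P^{\alpha}(n))$, which is algebraic.

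For (3), I would first note that by (2) the rational class $h + \pi^*B = \frac{1}{n}(nh + \pi^*b)$ is of Hodge type $(1,1)$, so cup product with $(h+\pi^*B)^j$ shifts Hodge type by $(j,j)$. Combined with the fact that $\pi^*$ is a morphism of Hodge structures, this shows that~\eqref{equation-Hodge-P} is a morphism of rational Hodge structures after the Tate twist $(-j)$ on each summand. To verify it is an isomorphism, I would expand $(h+\pi^*B)^j = \sum_k \binom{j}{k} h^{j-k} (\pi^*B)^k$ and regroup by powers of $h$; this exhibits~\eqref{equation-Hodge-P} as the composition of the $\bQ$-linearization of~\eqref{equation-H-P} with a unipotent upper-triangular change of basis on the source indexed by $j$ (with identity on the diagonal blocks). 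Such a change of basis is automatically invertible over $\bQ$.

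The step requiring the most care is (2), where the bookkeeping must correctly distinguish twisted vs.\ untwisted and topological vs.\ algebraic line bundles in order to extract the clean identity $nh + \pi^*b = c_1(\cO_P^\alpha(n))$. Once this is established, (1) is essentially Leray--Hirsch applied to the topological trivialization supplied by Lemma~\ref{lemma-top-triv}, and (3) becomes a purely formal change-of-basis argument coupled with the Hodge-type computation for $h + \pi^* B$.
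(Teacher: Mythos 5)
Your proposal is correct and follows essentially the same route as the paper: Leray--Hirsch for the projective bundle formula in (1); identifying $nh+\pi^*b$ as the first Chern class of the honest algebraic line bundle $\cO_P^{\alpha}(1)^{\otimes n}$ via equation~\eqref{Otop1} for (2); and then using (2) to see that cupping with $(h+\pi^*B)^j$ is a morphism of Hodge structures and that~\eqref{equation-Hodge-P} differs from the $\bQ$-linearization of~\eqref{equation-H-P} by a unipotent change of basis for (3). You have simply spelled out the "isomorphism follows from~\eqref{equation-H-P}" step of part (3) more explicitly than the paper does, which is a welcome addition.
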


\begin{proof}
The isomorphism~\eqref{equation-H-P} is the standard formula for the cohomology of a topological projective bundle. 
For \eqref{nhb-algebraic}, note that $\cO_{P}^{\alpha}(1)^{\otimes n}$ is an (untwisted) line bundle on $P$; hence its Chern class, which by~\eqref{Otop1} equals $nh + \pi^*c_1(L^{ \otimes n}) = nh + \pi^*b$, is algebraic. 
This implies that~\eqref{equation-Hodge-P} is a map of Hodge structures, while the fact that it is an isomorphism follows from~\eqref{equation-H-P}. 
\end{proof}

\subsection{Divisibility obstructions}
\label{section-obstruction}

The following result is a more precise form of Theorem~\ref{theorem-lower-bound}. 

\begin{theorem}
\label{theorem-divisibility} 
Let $X$ be a smooth projective complex variety with $\alpha \in \Br(X)$. 
Let $e$ be a positive integer and 
$\pi \colon P \to X$ a Severi--Brauer variety of class $\alpha$ and relative dimension at least $e$.  
\begin{enumerate} 
\item \label{divisible-1}
$\ind(\alpha)$ divides $e$ if and only if there exists a closed subvariety $Z \subset P$ whose generic fiber $Z_{\eta} \subset P_{\eta}$ is a twisted linear subvariety of codimension $e$. 
In particular, if $\ind(\alpha)$ divides $e$, there exists an integral Hodge class $\gamma \in \rH^{e,e}(P, \bZ)$ whose restriction to fibers is the class of a codimension $e$ linear subspace. 
\item \label{divisible-2}
If $\alpha$ is topologically trivial and $B \in \rH^2(X, \bQ)$ is a rational $B$-field for $\alpha$, then there exists a class $\gamma \in \rH^{e,e}(P, \bZ)$ satisfying the above condition if and only if there exist Hodge classes $c_j \in \rH^{j,j}(X, \bQ)$ as in the conclusion of Theorem~\ref{theorem-lower-bound}. 
Explicitly, $\gamma$ and the $c_j$ are related by  
\begin{align*}
    \gamma & = (h+\pi^*(B))^e + \pi^*(c_1)(h + \pi^*(B))^{e-1} + \cdots + \pi^*(c_m)(h+\pi^*(B))^{e-m} \\ 
    & = h^e + \pi^*(p_1^{B,e}(c_1))h^{e-1} + 
    \pi^*(p_2^{B,e}(c_1,c_2))h^{e-2} + \cdots + \pi^*(p_{m}^{B,e}(c_1, \dots, c_m)) h^{e - m} , 
\end{align*} 
where $m = \min \set{ e, \dim(X) }$. 
\end{enumerate} 
\end{theorem}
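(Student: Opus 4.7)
The plan is to prove the two parts somewhat separately, using the structural results on Severi--Brauer varieties for part~(1) and the Hodge-theoretic splittings from Lemma~\ref{lemma-cohomology-SB} for part~(2).

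For part~(1), first observe that the relative dimension $r$ of $\pi$ satisfies $\ind(\alpha) \mid r+1$, since $r+1$ is the degree of the central simple algebra over $k(X)$ associated to $P_\eta$. By Lemma~\ref{lemma-index-SB}, $\ind(\alpha)$ equals the minimum of $\dim(L)+1$ over twisted linear subvarieties $L \subset P_\eta$, and by Lemma~\ref{lemma-twisted-linear} the attained values of $\dim(L)+1$ are exactly the positive multiples of $\ind(\alpha)$ that are $\leq r+1$. Combined with $\ind(\alpha) \mid r+1$, this shows that $\ind(\alpha) \mid e$ is equivalent to $\ind(\alpha) \mid r-e+1$, hence to the existence of a twisted linear subvariety $L \subset P_\eta$ of codimension $e$, which one closes up inside $P$ to obtain the desired $Z$. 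Then $\gamma \coloneqq [Z] \in \rH^{2e}(P, \bZ)$ is automatically integral and Hodge of type $(e,e)$, and its restriction to any fiber $P_s \cong \bP^r$ equals $h^e$ because the restriction map to fibers is locally constant on the connected base $X$ and on the geometric generic fiber the class $[L]$ equals $h^e$.

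For part~(2), I would use the isomorphism of rational Hodge structures~\eqref{equation-Hodge-P} to write any class $\gamma \in \rH^{2e}(P, \bQ)$ uniquely as
$$\gamma = \sum_{j=0}^{e} \pi^*(a_j)(h+\pi^*B)^j, \qquad a_j \in \rH^{2(e-j)}(X, \bQ).$$
Then $\gamma$ being a Hodge class of type $(e,e)$ translates to each $a_j$ being a rational Hodge class of type $(e-j,e-j)$ on $X$, and the fiber condition $\gamma|_{P_s} = h^e$ forces $a_e = 1$, since $\pi^*(a_j)|_{P_s} = 0$ for $j < e$. Setting $c_j \coloneqq a_{e-j}$ for $1 \leq j \leq m = \min\{e, \dim X\}$ yields Hodge classes realizing the first displayed expression for $\gamma$.

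To pass to the second expression and identify the integrality conditions, I would expand each $(h+\pi^*B)^{e-j}$ by the binomial theorem and collect terms by powers of $h$: the coefficient of $h^{e-i}$ equals $\pi^*(p_i^{B,e}(c_1,\dots,c_i))$ for $1 \leq i \leq m$, is $1$ for $i=0$, and automatically vanishes for $i > \dim X$. The integral splitting~\eqref{equation-H-P} from Lemma~\ref{lemma-cohomology-SB} then identifies integrality of $\gamma$ with integrality of each of these coefficients, yielding the asserted equivalence. The main obstacle is purely bookkeeping: one must juggle the two parallel decompositions of $\rH^{2e}(P)$ in Lemma~\ref{lemma-cohomology-SB}, one via powers of $h$ (adapted to integrality) and one via powers of $h+\pi^*B$ (adapted to Hodge theory), and carry out the change-of-basis computation that produces the specific polynomials $p_i^{B,e}$ from~\eqref{pBe}.
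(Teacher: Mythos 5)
Your proof follows essentially the same route as the paper: part (1) invokes Lemmas~\ref{lemma-index-SB} and~\ref{lemma-twisted-linear} and takes closures of twisted linear subvarieties of the generic fiber, and part (2) compares the two decompositions of $\rH^{2e}(P)$ from Lemma~\ref{lemma-cohomology-SB} --- the integral one in powers of $h$ and the rational Hodge-theoretic one in powers of $h+\pi^*B$ --- and performs the binomial change of basis that produces the polynomials $p_i^{B,e}$. Your bookkeeping in part (2) and the divisibility arithmetic in part (1) are correct, and the only place you are slightly vaguer than one might like is the claim that $[Z]$ restricts to $h^e$ on every fiber (a short monodromy argument, or reading off the $\rH^0(X,\bZ)$-component in the decomposition~\eqref{equation-H-P}, makes this precise), but the paper's proof is equally terse on that point.
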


\begin{proof}
\eqref{divisible-1} follows from Lemmas~\ref{lemma-index-SB} and~\ref{lemma-twisted-linear} by taking the closure of a twisted linear subvariety of the generic fiber. 
For~\eqref{divisible-2}, suppose that $\gamma \in \rH^{e,e}(P, \bZ)$ is an integral Hodge class whose restriction to fibers is the class of a codimension $e$ linear subspace. 
Then in terms of the decomposition~\eqref{equation-H-P}, we can write 
\begin{equation}
\label{gamma-1}
    \gamma = h^e + \pi^*(b_1) h^{e-1} + \pi^*(b_2)h^{e-2} + 
    \cdots + \pi^*(b_e), 
\end{equation}
where $b_j \in \rH^{2j}(X, \bZ)$. 
On the other hand, by the description of the Hodge structure of $P$ in~\eqref{equation-Hodge-P}, in rational cohomology we can write 
\begin{equation}
\label{gamma-2}
    \gamma = (h+\pi^*(B))^e + \pi^*(c_1)(h + \pi^*(B))^{e-1} + \pi^*(c_2)(h + \pi^*(B))^{e-2} + \cdots + \pi^*(c_e), 
\end{equation}
where $c_j \in \rH^{j,j}(X, \bQ)$. 
Expanding~\eqref{gamma-2} in terms of powers of $h$ and comparing with~\eqref{gamma-1}, we find that the $c_j$ satisfy the conclusion of Theorem~\ref{theorem-lower-bound}. 
Reversing the logic, given such $c_j$, defining $\gamma$ by~\eqref{gamma-2} gives an integral Hodge class of the required form. 
\end{proof}

\begin{example}[Enhanced Kresch obstruction]
\label{example-kresch}
Let $X$ be a smooth projective complex variety, let $b \in \rH^2(X, \bZ)$, let $B = b/2 \in \rH^2(X, \bQ)$, and let $\alpha = \exp(B) \in \Br(X)[2]$ be the corresponding topologically trivial class. 
If $\ind(\alpha) = 2$, then by  Theorem~\ref{theorem-lower-bound} there exist Hodge classes $c_1 \in \rH^{1,1}(X, \bQ)$ and $c_2 \in \rH^{2,2}(X, \bQ)$ such that 
\begin{align*}
p_{1}^{B,2}(c_1) & = b + c_1, \\ 
p_{2}^{B,2}(c_1, c_2) & = \frac{b^2}{4} + \frac{b c_1}{2} + c_2 ,
\end{align*} 
are both integral. The first says that $c_1$ is integral, and multiplying the second by $4$ shows that 
\begin{equation*}
b^2  + 2bc_1 + 4c_2 \in 4\rH^4(X, \bZ), 
\end{equation*}
and hence $4c_2$ is integral. 
In sum, if $\ind(\alpha) = 2$ then there exist integral Hodge classes 
$c \in \rH^{1,1}(X, \bZ)$ and 
$d \in \rH^{2,2}(X, \bZ)$ such that
\begin{equation}
\label{kresch-enhanced} 
        b^2 \equiv 2bc + d \pmod 4.
\end{equation}
Modulo $2$ this gives 
\begin{equation}
\label{kresch-original}
    b^2 \equiv d \pmod 2,  
\end{equation}
which is precisely Kresch's obstruction to $\ind(\alpha) = 2$ from \cite{kresch}, where examples with nontrivial obstruction are also constructed. 
We note that the obstruction~\eqref{kresch-enhanced} is stronger than and leads to interesting examples inaccessible by~\eqref{kresch-original}; see for instance the $g = 3$ case of Proposition~\ref{proposition-index-av}. 
\end{example}

As explained in the introduction, Theorem~\ref{theorem-divisibility} is inspired by Hotchkiss's Hodge-theoretic index $\ind_{\Hdg}(\alpha)$, which must divide $e$ in order for $\ind(\alpha)$ to do so. 
The following shows that Hotchkiss's obstruction is at least as strong as ours: 

\begin{lemma}
\label{lemma-obstruction-comparison}
Let $X$ be a smooth projective complex variety with $\alpha \in \Br(X)$. 
Let 
$e$ be a positive integer and 
$\pi \colon P \to X$ a Severi--Brauer variety of class $\alpha$ and relative dimension at least $e$.  
If $\ind_{\Hdg}(\alpha)$ divides $e$, then there exists an integral Hodge class $\gamma \in \rH^{e,e}(P, \bZ)$ whose restriction to fibers is the class of a codimension $e$ linear subspace.  
\end{lemma}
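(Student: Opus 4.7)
The strategy is to construct $\gamma$ as (a sign times) the top Chern class of an untwisted rank-$e$ topological K-theory class on $P$, obtained by pulling back a rank-$e$ Hodge class from $X$ and twisting by the tautological line bundle on the Severi--Brauer variety.

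First, I would observe that the subgroup of Hodge classes in $\Ktop[0](X,\alpha)$ is preserved by the rank map $\Ktop[0](X,\alpha) \to \bZ$, whose image on Hodge classes is the subgroup $\ind_{\Hdg}(\alpha) \cdot \bZ$ by definition. Since $\ind_{\Hdg}(\alpha)$ divides $e$, there exists a Hodge class $v \in \Ktop[0](X,\alpha)$ with $\rk(v)=e$. Next, I would pull $v$ back via $\pi$ to a Hodge class $\pi^* v \in \Ktop[0](P,\pi^*\alpha)$ of rank $e$, and tensor with the class of the tautological $(-\pi^*\alpha)$-twisted line bundle $\cO_P^{\alpha}(-1) \coloneqq \cO_P^{\alpha}(1)^\vee$ (recall $\cO_P^{\alpha}(1)$ is $\pi^*\alpha$-twisted by Remark~\ref{remark-Oalpha1}). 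The resulting class
\[
u \coloneqq \pi^*v \otimes [\cO_P^{\alpha}(-1)] \in \Ktop[0](P)
\]
lives in the untwisted topological K-theory of $P$, has rank $e$, and is a Hodge class because each of the two factors is.

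Second, I would set $\gamma \coloneqq (-1)^e c_e(u) \in \rH^{2e}(P,\bZ)$, where $c_e$ denotes the $e$-th topological Chern class. Integrality is automatic, and because $u$ is a Hodge class in $\Ktop[0](P)$ its Chern character lies in $\bigoplus_i \rH^{i,i}(P,\bQ)$; since Chern classes are universal polynomials in the Chern character components, this forces $c_e(u)$ to lie in $\rH^{e,e}(P,\bQ)$, hence $\gamma \in \rH^{e,e}(P,\bZ)$.

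Third, I would compute the fiber restriction. Over a geometric point $x$ of $X$ the class $\alpha|_{\bar x}$ is canonically trivial, so $\pi^*v|_{P_x}$ becomes the rank-$e$ trivial class $e\cdot [\cO_{P_x}]$; meanwhile $\cO_P^{\alpha}(-1)|_{P_x} = \cO_{\bP^N}(-1)$ as an ordinary line bundle. Hence $u|_{P_x} = [\cO_{\bP^N}(-1)^{\oplus e}]$, and
\[
c_e(u|_{P_x}) = c_1(\cO_{\bP^N}(-1))^e = (-h)^e,
\]
so $\gamma|_{P_x} = h^e$ is the class of a codimension $e$ linear subspace, as required.

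The main technical obstacle is not a single hard step but rather the bookkeeping ensuring that each operation --- pullback along $\pi$, tensor with an algebraic twisted line bundle, and passage to Chern classes --- preserves the Hodge property on twisted topological K-theory. These compatibilities are built into Hotchkiss's construction of $\Ktop[0](X,\alpha)$ via the Hodge theory of categories of \cite{perry-CY2}; the argument simply packages them together, with the explicit fiber computation pinning down the geometric meaning of $\gamma$.
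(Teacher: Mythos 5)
Your proof is correct and takes essentially the same approach as the paper; the only cosmetic difference is that the paper forms the untwisted class as $\pi^*(E^\vee) \otimes \cO_P^{\alpha}(1)$ and takes $\gamma = c_e$ of it directly, whereas you use $\pi^*v \otimes \cO_P^{\alpha}(-1)$ and correct with the sign $(-1)^e$ — both choices match the twists and yield $h^e$ on fibers.
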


\begin{proof} 
By definition there is a Hodge class in $\Ktop[0](X, \alpha)$ of rank $\ind_{\Hdg}(\alpha)$, so by taking an appropriate multiple we find a Hodge class $E \in \Ktop[0](X, \alpha)$ of rank $e$. 
We consider the class $F = \pi^*(E^{\vee}) \otimes \cO_{P}^{\alpha}(1)$ 
which lies in $\Ktop[0](P)$ (without a twist) and is Hodge (because multiplication by the algebraic $\alpha$-twisted line bundle $\cO_{P}^{\alpha}(1)$ preserves being Hodge). 
Then $\gamma = c_e(F) \in \rH^{e,e}(P, \bZ)$ is an integral Hodge class, and if $x \in X(\bC)$ then the restriction of $\gamma$ to $P_x \cong \bP^{r}$ is equal to $c_e( \cO_{\bP^r}(1)^{\oplus e})$, i.e. the class of a codimension $e$ linear subspace.
\end{proof}

\subsection{Period-index conjecture obstructions} 
\label{section-obstruction-PI} 
As mentioned in the introduction, 
Theorem~\ref{theorem-lower-bound} gives a potential obstruction to 
the period-index conjecture, which predicts 
that $\ind(\alpha)$ divides $\per(\alpha)^{\dim(X)-1}$. 
This obstruction is trivial away from primes which are small relative to the dimension: 

\begin{lemma}
\label{lemma-PI-obstruction-vanish}
Let $X$ be a smooth projective complex variety with $\alpha \in \Br(X)$ a topologically trivial class. 
Let $b \in \rH^2(X, \bZ)$ be an integral $B$-field of degree $n = \per(\alpha)$ for $\alpha$, and let $B = b/n$ be the corresponding rational $B$-field. 
Then 
there exist Hodge classes $c_j$ as in the conclusion of Theorem~\ref{theorem-lower-bound} for $e$ a multiple of 
\begin{equation} 
\label{lcm-period}
\mathrm{lcm} \set{ \per(\alpha)^i i \st 1 \leq i \leq \dim(X)-1} . 
\end{equation}
In fact, if $\pi \colon P \to X$ is a Severi--Brauer variety of class $\alpha$ and relative dimension at least $e$, then the class  
\begin{equation*}
\gamma =h^e + \pi^*\left( \frac{e}{n} b \right) h^{e-1} + 
\pi^* \left( \frac{\binom{e}{2}}{n^2} b^2 \right)  + 
\cdots + \pi^* \left( \frac{ \binom{e}{\dim(X)-1}}{{n^{\dim(X)-1}}} b^{\dim(X)-1} \right) h^{e - \dim(X) + 1}. 
\end{equation*} 
satisfies the following: 
\begin{enumerate}
\item \label{gamma-integral}
$\gamma$ is an integral Hodge class, i.e. $ \gamma \in \rH^{e,e}(P, \bZ)$. 
\item \label{gamma-fibers}
$\gamma$ restricts to the fibers of $\pi \colon P \to X$ as the class of a codimension $e$ linear subspace. 
\item \label{gamma-algebraic}
$\per(\alpha)^e {\gamma}$ is algebraic, i.e. lies in the image of $\CH^e(P) \to \rH^{e,e}(P, \bZ)$. 
\end{enumerate} 
\end{lemma}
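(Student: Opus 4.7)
The plan is to take $c_j = 0$ for all $j$ in the framework of Theorem~\ref{theorem-divisibility}, and verify that this makes the given class $\gamma$ both integral and Hodge. The proof then reduces to a combination of a numerical divisibility lemma, one application of Lemma~\ref{lemma-cohomology-SB}(1)--(2), and the observation that the ``missing'' top-degree term is the class of a linear subspace in a fiber, hence tautologically algebraic.

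First, for the numerical input, I would note that for any $1 \le i \le \dim(X)-1$, the identity $\binom{e}{i} = (e/i)\binom{e-1}{i-1}$ together with $n^i \cdot i \mid e$ gives $n^i \mid \binom{e}{i}$. Consequently $\binom{e}{i} B^i = \binom{e}{i}/n^i \cdot b^i$ lies in $\rH^{2i}(X,\bZ)$. This already yields assertion (1): taking $c_j = 0$ in Theorem~\ref{theorem-lower-bound} gives the polynomials $p_i^{B,e}(0,\ldots,0) = \binom{e}{i}B^i$, which are integral.

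Next, set $H = h + \pi^* B = (nh+\pi^*b)/n$. By Lemma~\ref{lemma-cohomology-SB}\eqref{nhb-algebraic}, $nh+\pi^*b$ is algebraic, so $H \in \rH^{1,1}(P,\bQ)$. Expanding $H^e$ and using that $B^i = 0$ for $i > \dim(X)$ in $\rH^{2i}(X,\bQ)$,
\begin{equation*}
H^e \;=\; \gamma \;+\; \frac{\binom{e}{\dim(X)}}{n^{\dim(X)}}\, h^{e-\dim(X)}\,\pi^*\bigl(b^{\dim(X)}\bigr).
\end{equation*}
Writing $b^{\dim(X)} = N \cdot [\mathrm{pt}]$ in $\rH^{2\dim(X)}(X,\bZ) \cong \bZ$, the correction term is a rational multiple of $h^{e-\dim(X)}\cdot \pi^*[\mathrm{pt}]$, which represents a codimension $(e-\dim(X))$ linear subspace inside a fiber $P_x \cong \bP^r$ and is therefore algebraic. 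Hence $\gamma$ is Hodge, being the difference of a Hodge class $H^e$ and a rational multiple of an algebraic class. Its integrality \eqref{gamma-integral} then follows from the decomposition~\eqref{equation-H-P} of Lemma~\ref{lemma-cohomology-SB}, since each coefficient of $h^{e-i}$ for $0 \le i \le \dim(X)-1$ was already shown to be integral. Assertion \eqref{gamma-fibers} is immediate: $\pi^*(b^i)$ restricts to zero on fibers for $i \ge 1$, so $\gamma|_{P_x} = h|_{P_x}^e$.

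Finally, for \eqref{gamma-algebraic} I would multiply the previous identity by $n^e$ to obtain
\begin{equation*}
(nh+\pi^*b)^e \;=\; n^e\gamma \;+\; \binom{e}{\dim(X)}\,n^{e-\dim(X)}\, N\cdot h^{e-\dim(X)}\,\pi^*[\mathrm{pt}].
\end{equation*}
The left side is the $e$-th power of the algebraic class $nh+\pi^*b$, and the correction term is an integer multiple of the algebraic class $h^{e-\dim(X)}\cdot \pi^*[\mathrm{pt}]$ (a linear subspace of a fiber). Hence $n^e\gamma = \per(\alpha)^e\gamma$ is algebraic. The main subtlety --- and essentially the only one --- is recognizing that the truncation at $i = \dim(X)-1$ in the definition of $\gamma$ exactly matches the range where the binomial divisibility $n^i \mid \binom{e}{i}$ forces integrality, while the discarded $i = \dim(X)$ term is harmlessly concentrated in fibers and so automatically algebraic/Hodge.
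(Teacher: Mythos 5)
Your proof follows essentially the same route as the paper's, and the main computations are correct, but there is one imprecision in the first step. When you claim that ``taking $c_j = 0$ in Theorem~\ref{theorem-lower-bound} gives the polynomials $p_i^{B,e}(0,\ldots,0) = \binom{e}{i}B^i$, which are integral,'' your numerical divisibility argument only covers $1 \le i \le \dim(X)-1$. Theorem~\ref{theorem-lower-bound} requires the integrality also at $i = \dim(X)$ (since $e \ge \dim(X)$), and there is no reason for $n^{\dim(X)}$ to divide $\binom{e}{\dim(X)} \cdot b^{\dim(X)}$. The fix, and what the paper actually does, is to take $c_j = 0$ for $j < \dim(X)$ and $c_{\dim(X)} = -\binom{e}{\dim(X)}B^{\dim(X)}$, which is legitimate because all of $\rH^{2\dim(X)}(X,\bQ) = \bQ$ is Hodge. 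This choice is in fact already implicit in your identity $H^e = \gamma + \binom{e}{\dim(X)}B^{\dim(X)}\pi^*[\mathrm{pt}]\,h^{e-\dim(X)}$, so the error is cosmetic rather than structural.

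Once that is patched, your argument is sound and lines up closely with the paper's: you both get the integrality from the divisibility $n^i i \mid e \Rightarrow n^i \mid \binom{e}{i}$ and the decomposition~\eqref{equation-H-P}, and both get algebraicity of $n^e\gamma$ from the algebraicity of $nh + \pi^*b$. The one genuine (small) variation is that you establish the Hodge property of $\gamma$ directly, by writing $\gamma$ as $H^e$ minus a rational multiple of the algebraic class $h^{e-\dim(X)}\pi^*[\mathrm{pt}]$ (a linear subspace of a fiber), rather than citing Theorem~\ref{theorem-divisibility}\eqref{divisible-2}; that is a nice concrete observation, and it gives the conclusion of the first sentence of the lemma for free via Theorem~\ref{theorem-divisibility}\eqref{divisible-2} in the other direction, repairing the gap above by a different route than the paper.
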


\begin{proof}
Note that $e \geq \dim(X)$ by our choice of $e$, so we only need to consider the polynomial $p_i^{B,e}$ from~\eqref{pBe} for $1 \leq i \leq \dim(X)$. Its leading term is 
\begin{equation*}
    p_i^{B,e}(0, \dots, 0) = 
    \binom{e}{i} \left(\frac{b}{n} \right)^i . 
\end{equation*}
For $1 \leq i \leq \dim(X) - 1$, if $e$ is a multiple of $n^{i}i$ then $n^i$ divides $\binom{e}{i}$, so for $e$ as in the lemma, 
the leading term above is integral for $1 \leq i \leq \dim(X)-1$. 
Therefore, taking  
$c_j = 0$ for $1 \leq j \leq \dim(X)-1$ and choosing $c_{\dim(X)} \in \rH^{2\dim(X)}(X, \bQ) = \bQ$ suitably so that $p_{\dim(X)}^{B,e}(0, \dots, 0, c_{\dim(X)}) = 0$ works.  
Explicitly, $c_{\dim(X)} = - \binom{e}{\dim(X)} \left(\frac{b}{n} \right)^{\dim(X)}$. 
This proves the first claim of the lemma. 

For the second, Theorem~\ref{theorem-divisibility} shows that $\gamma$ satisfies properties~\eqref{gamma-integral} and~\eqref{gamma-fibers}. 
Note that we can also write 
\begin{equation*}
\gamma = \left( h + \pi^* \left( \frac{b}{n} \right) \right)^{e} - \binom{e}{\dim(X)}\pi^*\left( \frac{b}{n} \right)^{\dim(X)}\left( h + \pi^* \left( \frac{b}{n} \right) \right)^{e-\dim(X)}
\end{equation*} 
Therefore, the algebraicity of $n^e\gamma$ follows from Lemma~\ref{lemma-cohomology-SB}\eqref{nhb-algebraic}. 
\end{proof}

\begin{remark}
\label{remark-obs-trivial} 
As $\ind(\alpha)$ and $\per(\alpha)$ have the same prime factors, $\ind(\alpha)$ divides~\eqref{lcm-period} if and only if it divides 
\begin{equation}
\label{obs-vanish}
\per(\alpha)^{\dim(X)-1} \cdot \prod_{p \mid \per(\alpha)} p^{\max \set{ v_p(i/\per(\alpha)^{\dim(X)-1-i}) \, \st \, 1\leq i \leq \dim(X)-1} }    , 
\end{equation}
where the product is over primes $p$ dividing $\per(\alpha)$ and $v_p$ denotes the $p$-adic valuation. Note that a prime $p$ can only appear nontrivially if it divides $(\dim(X)-1)!$.  The lemma says that Theorem~\ref{theorem-lower-bound} gives no obstruction to $\ind(\alpha)$ dividing~\eqref{obs-vanish}, which when $\per(\alpha)$ is prime to $(\dim(X)-1)!$ equals the power $\per(\alpha)^{\dim(X)-1}$ appearing in the period-index conjecture.  
\end{remark}

\begin{remark}
In the situation of Lemma~\ref{lemma-PI-obstruction-vanish}, in order to show that $\ind(\alpha)$ divides~\eqref{lcm-period}, it would suffice by Theorem~\ref{theorem-divisibility} to show that there exists a closed subvariety $Z \subset P$ of class~$\gamma$. 
An intermediate but still interesting question is whether the class $\gamma \in \rH^{e,e}(P, \bZ)$ is algebraic (as opposed to only the multiple $\per(\alpha)^e {\gamma}$ handled in the lemma). 
\end{remark}

Let us work out explicitly what the obstruction to the period-index conjecture says in the first interesting case, recovering \cite[Theorem 1.4]{hotchkiss-pi}: 

\begin{example}[Hotchkiss's threefold obstruction]
\label{example-dim3} 
Let $X$ be a smooth projective complex threefold, and let $\alpha \in \Br(X)$ be a topologically trivial class. 
We assume $n = \per(\alpha)$ is even, 
as by Remark~\ref{remark-obs-trivial} the obstruction from Theorem~\ref{theorem-lower-bound} to $\ind(\alpha)$ dividing $n^{2}$ is trivial otherwise. 
Let $B = b/n$ be a rational $B$-field for $\alpha$, where $b \in \rH^2(X, \bZ)$. 
If $\ind(\alpha)$ divides $n^2$, then by Theorem~\ref{theorem-lower-bound} there exist Hodge classes $c_1 \in \rH^{1,1}(X, \bQ)$, $c_2 \in \rH^{2,2}(X, \bQ)$, and $c_3 \in \rH^{3,3}(X, \bQ)$ such that 
\begin{align*}
    p_1^{B, n^2}(c_1) & = nb+ c_1 ,  \\ 
    p_2^{B, n^2}(c_1, c_2) & = \frac{n^2 -1}{2} b^2 + \frac{n^2-1}{n} b c_1 + c_2 , \\ 
    p_3^{B, n^2}(c_1, c_2, c_3) & = \binom{n^2}{3}B^3 + \binom{n^2 - 1}{2}B^2c_1 + (n^2 - 2)Bc_2 + c_3, 
\end{align*}
are integral. 
As all of $\rH^{6}(X, \bQ) = \bQ$ is Hodge, for any $c_1$ and $c_2$ we may choose $c_3$ so that the third expression vanishes, so this does not impose any constraints. 
The integrality of the first expression says that $c_1$ is integral, while multiplying the second by $n$ shows that 
\begin{equation*}
    \frac{n}{2}(n^2 -1)b^2 + (n^2-1)b c_1 + nc_2 \in n\rH^{4}(X, \bZ), 
\end{equation*}
and hence $nc_2$ is integral. 
In sum, if $\ind(\alpha)$ divides $n^2$ then there exists integral Hodge classes $c \in \rH^{1,1}(X, \bZ)$ and $d \in \rH^{2,2}(X, \bZ)$ such that 
\begin{equation*}
    \frac{n}{2} b^2 + bc + d \equiv 0 \pmod n. 
\end{equation*}
We do not know any reason for the existence of such $c$ and $d$ in general, but we also do not know an example where nonexistence can be shown. 
\end{example}

It is easy to similarly work out many Hodge-theoretic obstructions to the period-index conjecture in other dimensions. 
For instance, on fourfolds there is a possible obstruction when $\per(\alpha)$ is divisible by $3$, and so on. 

\subsection{Brauer classes of large index}
\label{section-large-index} 
It is interesting to find examples of Brauer classes whose index is the maximal one predicted by the period-index conjecture. 
Theorem~\ref{theorem-lower-bound} gives an effective method to do so, which we illustrate first in dimension $3$. 

\begin{example}[Sharpness criterion in dimension $3$]
\label{example-sharp-dim3} 
Let $X$ be a smooth projective complex threefold, 
let $b \in \rH^2(X, \bZ)$, let $n > 0$ be an integer, and let $\alpha = \exp(b/n) \in \Br(X)[n]$ be the corresponding topologically trivial class. 
If $\ind(\alpha)$ divides $n$, then using  Theorem~\ref{theorem-lower-bound} and arguing as in Example~\ref{example-kresch}, we find that there exist Hodge classes $c \in \rH^{1,1}(X, \bZ)$ and $d \in \rH^{2,2}(X, \bZ)$ such that 
\begin{equation} 
\label{equation-sharp} 
(n-1)b^2 \equiv 2bc + d \pmod{2n}.
\end{equation} 
If the period-index conjecture holds for $\alpha$ and $n$ is prime, then the failure of the existence of $c$ and $d$ implies that $\ind(\alpha) = n^2$, 
i.e. the divisibility bound predicted by the period-index conjecture is sharp for $\alpha$. 
Note that for $n = 2$ the above obstruction coincides with that of Example~\ref{example-kresch}. 
\end{example}

The criterion of Example~\ref{example-sharp-dim3}, as well as the corresponding sharpness criterion that one can derive in higher dimensions, can often by effectively checked for sufficiently generic varieties. 
We illustrate this on products of elliptic curves which are generic in the sense of Lemma~\ref{lemma-generic-product} below; 
in fact, we will produce Brauer classes of any index allowed by the period-index conjecture on such products. 

\begin{lemma}
\label{lemma-generic-product} 
Let $X = E_1 \times \cdots \times E_g$ be a product of $g$ pairwise non-isogenous complex elliptic curves.  
For each $1 \leq i \leq g$, let $x_i, y_i$ be a symplectic basis for $\rH^1(E_i, \bZ)$. 
Then: 
\begin{enumerate}
\item \label{H11-prod-Ei}
$\rH^{1,1}(X, \bZ) = \bigoplus_{i=1}^g \bZ(x_i \cup y_i)$, where 
we denote the pullbacks of $x_i$ and $y_i$ to $X$ by the same letters.  
\item \label{Hodge-prod-Ei}
The ring $\bigoplus_j \rH^{j,j}(X, \bQ)$ of rational Hodge classes is generated by $\rH^{1,1}(X, \bQ)$; 
explicitly,  
\begin{equation*}
\rH^{j,j}(X, \bQ) = \bigwedge^j \rH^{1,1}(X, \bQ) = 
\bigoplus_{i_1 < \cdots < i_j} \bQ(x_{i_1} \cup y_{i_1} \cup \cdots \cup x_{i_j} \cup y_{i_j}). 
\end{equation*} 
\end{enumerate}
In particular, the integral Hodge conjecture (Conjecture~\ref{conjecture-IHC} below) holds in all degrees for~$X$. 
\end{lemma}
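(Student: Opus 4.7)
The plan is to combine the Künneth decomposition of $\rH^*(X,\bZ)$ with an analysis of Hodge classes in tensor products of the weight-one pieces $V_i \coloneqq \rH^1(E_i,\bQ)$, exploiting pairwise non-isogeny to kill all contributions except those from the exterior squares $\bigwedge^2 V_i = \bQ(x_i\cup y_i)$. Since each $\rH^*(E_i,\bZ)$ is torsion-free, Künneth gives $\rH^*(X,\bZ) = \bigotimes_i \rH^*(E_i,\bZ) = \bigwedge^* \rH^1(X,\bZ)$, and the Hodge structure on $\rH^*(X,\bQ)$ is the tensor product of those on the factors.

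For part (1), split $\rH^2(X,\bZ)$ via Künneth into the diagonal summands $\rH^2(E_i,\bZ) = \bZ(x_i\cup y_i)$ of pure type $(1,1)$ and the cross summands $V_i\otimes V_j$ for $i<j$. For the latter, I would use the principal polarization on $E_j$ to identify $V_i\otimes V_j\cong \Hom(V_j,V_i)(-1)$, so that a rational $(1,1)$-class becomes a morphism of Hodge structures $V_j\to V_i$; by the standard equivalence between polarizable weight-one rational Hodge structures and abelian varieties up to isogeny, these correspond to elements of $\Hom(E_i,E_j)\otimes\bQ$, which vanishes by hypothesis. Intersecting with the integral lattice in $\rH^2(X,\bZ)$ then gives the integral statement.

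For part (2), expand $\rH^{2j}(X,\bQ)$ via Künneth into summands indexed by multi-indices $\vec a\in\{0,1,2\}^g$ with $\sum a_i = 2j$. Writing $I_k=\{i:a_i=k\}$, each summand equals $W_{I_1}\coloneqq\bigotimes_{i\in I_1}V_i$ tensored with the one-dimensional piece $\prod_{i\in I_2}(x_i\cup y_i)$ of type $(|I_2|,|I_2|)$, so rational $(j,j)$-classes in the summand correspond to rational Hodge classes of middle type $(m,m)$ in $W_{I_1}$ where $|I_1|=2m$. The summands with $I_1=\emptyset$ contribute precisely the claimed generators. The heart of the argument is showing that $W_I$ carries no rational middle Hodge class when $I$ is nonempty, which I plan to establish via the Mumford--Tate group $G_I$ of $\prod_{i\in I}E_i$: in the non-CM case, pairwise non-isogeny forces $G_I$ to contain $\prod_{i\in I}\mathrm{SL}_2$, under which $W_I$ is already an irreducible external tensor product of standard representations; in the general case one verifies by a similar calculation that $W_I$ remains irreducible over $G_I$, and since a nontrivial irreducible representation cannot contain a one-dimensional Tate-twist character, no middle Hodge class can exist.

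Combining these steps yields the desired formula for $\rH^{j,j}(X,\bQ)$, and intersecting with the Künneth $\bZ$-basis (to which each generator $x_{i_1}\cup y_{i_1}\cup\cdots\cup x_{i_j}\cup y_{i_j}$ belongs) gives the integral version. The integral Hodge conjecture follows at once, since each generator is the fundamental class of the subvariety obtained by choosing a point on each $E_{i_k}$ and taking the full factor $E_i$ for the remaining indices. The main obstacle is the Mumford--Tate argument in its full generality: while the non-CM case is routine representation theory, pairs of non-isogenous elliptic curves sharing a common CM field (such as $(E,\bar E)$ for certain CM $E$) require a more careful character-theoretic verification that $W_I$ remains irreducible as a $G_I$-module.
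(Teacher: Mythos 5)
Your approach differs from the paper's, which disposes of both parts in one line by citing references: part~\eqref{H11-prod-Ei} "as the $E_i$ are pairwise non-isogenous" and part~\eqref{Hodge-prod-Ei} by appeal to \cite[Theorem~B.72]{survey-hodge}. Your proposal instead unwinds the K\"{u}nneth decomposition and runs a Mumford--Tate group argument from scratch, which is a legitimate and more self-contained route. Your treatment of part~\eqref{H11-prod-Ei} via $V_i\otimes V_j\cong\Hom(V_j,V_i)(-1)$ and $\Hom_{\mathrm{HS}}(V_j,V_i)\cong\Hom(E_i,E_j)\otimes\bQ=0$ is correct, and your K\"{u}nneth bookkeeping for part~\eqref{Hodge-prod-Ei} (reducing to the non-existence of middle Hodge classes in $W_I$ for nonempty $I$) is exactly right. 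In the non-CM case your Goursat-type argument giving $\prod\mathrm{SL}_2\subset G_I$ and hence irreducibility of $W_I$ is sound.

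Two points in your discussion of the CM case should be corrected. First, your proposed problem case $(E,\bar E)$ is illusory: if $E$ has CM by an imaginary quadratic field $K$, then $\bar E$ also has CM by $K$ (since $K$ is stable under complex conjugation), and any two complex elliptic curves with CM by the same $K$ are isogenous, so $\bar E\sim E$. Consequently, pairwise non-isogenous CM elliptic curves automatically have pairwise distinct CM fields, which simplifies your task rather than complicating it. Second, irreducibility of $W_I$ as a $G_I$-module is more than you need and in fact will often fail when several CM factors are present: over $\bC$ each $V_i$ with CM splits into two characters of the corresponding torus, so $W_I\otimes\bC$ is a sum of $2^{|I_{\mathrm{cm}}|}$ characters, and over $\bQ$ it typically decomposes into several Galois-orbit pieces. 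The correct goal is to show that $W_I$, appropriately Tate-twisted, has no $G_I$-invariants; equivalently, that no product $\prod_{i\in I}V_i^{\epsilon_i}$ of the rank-one summands is the Tate character. This is a concrete character computation using the distinctness of the CM fields together with the $\mathrm{SL}_2$-factors, and once set up correctly it does close the argument, recovering the result you could alternatively cite from \cite{survey-hodge}. I'd recommend either carrying out that invariant computation explicitly or simply invoking the reference as the paper does.
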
 

\begin{proof}
\eqref{H11-prod-Ei} holds as the $E_i$ are pairwise non-isogenous, while~\eqref{Hodge-prod-Ei} holds by \cite[Theorem~B.72]{survey-hodge}.
\end{proof} 

\begin{proposition}
\label{proposition-index-av}
Let $X = E_1 \times \cdots \times E_g$ be a product of $g \geq 2$ pairwise non-isogenous complex elliptic curves.   
Let $1 \leq t \leq g-1$ be an integer. 
Let $x_i, y_i$ be a symplectic basis for $\rH^1(E_i, \bZ)$, and let 
\begin{equation*}
    b = \sum_{i=1}^{t} x_i \cup y_{i+1} \in \rH^2(X, \bZ) . 
\end{equation*} 
For any integer $n > 0$, let $\alpha = \exp(b/n) \in \Br(X)[n]$ be the corresponding topologically trivial class. Then: 
\begin{enumerate}
\item \label{ind-divide-ng-1}
$\ind(\alpha)$ divides $n^{t}$. In fact, $\alpha$ is represented by an Azumaya algebra on $X$ of degree $n^{t}$. 
\item \label{ind-not-divide-ng-2}
If $n$ does not divide $(t-1)!$, then $\ind(\alpha)$ does not divide $n^{t-1}$. 
\end{enumerate}  
In particular, if $n$ is a prime power which does not divide $(t-1)!$, then $\ind(\alpha) = n^{t}$. 
\end{proposition}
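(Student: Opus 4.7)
The plan is to prove (1) by a construction of Azumaya algebras via period-index on surfaces and (2) by the Hodge-theoretic obstruction of Theorem~\ref{theorem-lower-bound}.

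For (1), I would factor each summand $x_i \cup y_{i+1}$ of $b$ through the projection $\pi_i \colon X \to E_i \times E_{i+1}$. The class $\alpha_i \coloneqq \exp((x_i \cup y_{i+1})/n) \in \Br(E_i \times E_{i+1})$ is topologically trivial of period dividing $n$; by Theorem~\ref{theorem-pi-surface} its index divides $n$, so (padding with matrices if needed) it is represented by an Azumaya algebra $A_i$ of degree exactly $n$. Then $\bigotimes_{i=1}^t \pi_i^* A_i$ is an Azumaya algebra of degree $n^t$ on $X$ with Brauer class $\sum_i \pi_i^* \alpha_i = \exp(b/n) = \alpha$, proving (1).

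For (2), I would argue by contradiction, supposing $\ind(\alpha) \mid n^{t-1}$. The hypothesis $n \nmid (t-1)!$ ensures $n \geq 2$, whence $n^{t-1} \geq t$, and the assumption $t \leq g-1$ gives $t < g$, so Theorem~\ref{theorem-lower-bound} applied with $e = n^{t-1}$ and rational $B$-field $B = b/n$ produces Hodge classes $c_j \in \rH^{j,j}(X, \bQ)$ for $1 \leq j \leq t$ such that $p_t^{B,e}(c_1, \ldots, c_t)$ is integral. The argument then collapses to a single coefficient: take the primitive basis element $\eta \coloneqq x_1 \wedge y_2 \wedge x_2 \wedge y_3 \wedge \cdots \wedge x_t \wedge y_{t+1}$ of $\rH^{2t}(X, \bZ) = \bigwedge^{2t} \rH^1(X, \bZ)$. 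A direct expansion in the exterior algebra gives $b^t = t! \cdot \eta$ (any assignment other than the identity ordering $(1, \ldots, t)$ yields a repeated $x_i$ or $y_i$ and vanishes), so the $\eta$-coefficient of $\binom{e}{t} B^t$ equals $n^{t-1}(n^{t-1}-1) \cdots (n^{t-1}-t+1)/n^t$, which reduces modulo $\bZ$ to $(-1)^{t-1}(t-1)!/n$.

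The principal technical step, and the part I expect to be the main obstacle, is to show that none of the correction terms $\binom{e-j}{t-j} B^{t-j} c_j$ with $j \geq 1$ contributes to the $\eta$-coefficient, so that the noninteger contribution above cannot be absorbed. By Lemma~\ref{lemma-generic-product} each $c_j$ is a $\bQ$-combination of monomials $m_I = x_{i_1} y_{i_1} \cdots x_{i_j} y_{i_j}$ with $i_1 < \cdots < i_j$; multiplying such a monomial by a term $\prod_{k \in S} x_k y_{k+1}$ from $B^{t-j}$ (with $S \subset \{1, \ldots, t\}$ of size $t-j$) and comparing the resulting $x$-indices $\{i_l\} \sqcup S$ against $\{1, \ldots, t\}$ and $y$-indices $\{i_l\} \sqcup (S+1)$ against $\{2, \ldots, t+1\}$, a nonzero $\eta$-component would force $\{i_l\}$ to equal its shift $\{i_l\} + 1$ as subsets of $\bZ$, impossible for any nonempty finite set. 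Thus the full $\eta$-coefficient of $p_t^{B,e}$ is $(-1)^{t-1}(t-1)!/n \in \bQ/\bZ$, nonzero by hypothesis, contradicting integrality and proving (2). The ``in particular'' assertion then follows in the prime case directly from (1) and (2): if $n = p$, then $\ind(\alpha)$ is a power of $p$ with $\ind(\alpha) \mid p^t$ and $\ind(\alpha) \nmid p^{t-1}$, forcing $\ind(\alpha) = p^t = n^t$; the general prime-power case is handled by a parallel application of the coefficient analysis at $e = n^t/p$.
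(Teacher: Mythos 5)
Your argument for part (2), including the key combinatorial claim that no correction term $B^{t-j}c_j$ with $j\ge 1$ contributes to the coefficient of $\eta = x_1\cup y_2\cup\cdots\cup x_t\cup y_{t+1}$, is essentially the same as the paper's; your ``shift set'' formulation ($I = I+1$ is impossible for a nonempty finite set) is a clean way to phrase the paper's observation that some factor $x_k\cup y_{k+1}$ must be missing. One small quibble: your parenthetical ``any assignment other than the identity ordering $(1,\dots,t)$ yields a repeated $x_i$ or $y_i$'' is not right as stated — all $t!$ permutations contribute (the degree-$2$ factors $x_iy_{i+1}$ commute, so there is no sign), which is exactly where the factor $t!$ in $b^t=t!\,\eta$ comes from.

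For part (1) you take a genuinely different route. The paper uses the elementary observation that each $\exp((x_i\cup y_{i+1})/n)$ is, via the Kummer sequence, the image of the symbol $x_{i,n}\cup y_{i+1,n}\in\rH^2_\et(X,\bmu_n)$, and hence is represented by a cyclic Azumaya algebra of degree $n$; tensoring $t$ of these gives the degree-$n^t$ representative. You instead pull back the period-index theorem on surfaces (Theorem~\ref{theorem-pi-surface}) through the projections $X\to E_i\times E_{i+1}$. This works, but it invokes a much heavier theorem and an additional step that you gloss over: Theorem~\ref{theorem-pi-surface} gives only that $\ind(\alpha_i\vert_{k(E_i\times E_{i+1})})$ divides $n$; to get an honest Azumaya algebra of degree $n$ on the whole surface you need to extend a twisted sheaf of the right rank from the generic point to a locally free one (fine on a smooth surface, but a point worth flagging, e.g.\ via Lemma~\ref{lemma-index-unramified} plus reflexivity). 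The paper's cyclic-algebra argument sidesteps all of this.

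The ``in particular'' clause is where there is a real gap in your proposal. Your treatment of the case $n=p$ is correct, but your claim that the general prime-power case is ``handled by a parallel application of the coefficient analysis at $e=n^t/p$'' does not go through. Write $n=p^k$. With $e=p^{kt-1}$, the $\eta$-coefficient of $\binom{e}{t}B^t$ is $(p^{kt-1}-1)\cdots(p^{kt-1}-t+1)/p$, whose $p$-adic valuation is $v_p((t-1)!)-1$; this is negative, and hence the obstruction fires, exactly when $p\nmid (t-1)!$, i.e.\ when $t\le p$. But the stated hypothesis is only that $n=p^k$ does not divide $(t-1)!$, i.e.\ $v_p((t-1)!)<k$, which allows $v_p((t-1)!)\ge 1$ when $k\ge 2$. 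For a concrete failure, take $p=2$, $k=2$, $t=4$: then $n=4\nmid 3!=6$, yet the obstruction at $e=n^t/p=128$ gives $128\cdot 127\cdot 126\cdot 125/256 = 127\cdot 63\cdot 125\in\bZ$, so your argument establishes only $\ind(\alpha)\in\{128,256\}$ and does not pin down $\ind(\alpha)=n^t=256$. Parts (1) and (2) give $k(t-1)<v_p(\ind(\alpha))\le kt$, which determines $\ind(\alpha)=n^t$ only when $k=1$ or, more generally, when $p\nmid(t-1)!$.
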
 

\begin{proof}
\eqref{ind-divide-ng-1} follows from a general observation about Brauer classes obtained as cup products. 
Let $x, y \in \rH^1(X, \bZ)$ and let $\beta = \exp(x \cup y/n) \in \Br(X)[n]$ be the corresponding $n$-torsion class. 
If $x_n , y_n \in \rH^1(X, \bmu_n)$ are the images of $x$ and $y$, then 
$\beta$ can also be described as the image in $\Br(X)[n]$ of the cup product $x_n \cup y_n \in \rH^2(X, \bmu_n)$. 
It follows that $\beta$ is represented by a cyclic Azumaya algebra on $X$ of degree $n$. 
Therefore, $\alpha$ can be represented by a product of $t$ such Azumaya algebras, which has degree $n^{t}$. 

For~\eqref{ind-not-divide-ng-2}, we use the obstruction of Theorem~\ref{theorem-lower-bound} for $B = b/n$, $e = n^{t-1}$, and $i = t$ (note that $t \leq n^{t-1}$ for $n > 1$). 
Namely, writing $p = p_{t}^{b/n, n^{t-1}}$ for simplicity, then in order for $\ind(\alpha)$ to divide $n^{t-1}$ there must exist classes 
$c_j \in \rH^{j,j}(X, \bQ)$ for $1 \leq j \leq t$ such that $p(c_1, \dots, c_{t})$ is integral. 
By definition, we have 
\begin{equation}
\label{pci} 
p(c_1, \dots, c_{t}) = 
\binom{n^{t-1}}{t} \frac{b^{t}}{n^{t}} + \binom{n^{t-1}-1}{t-1} \frac{b^{t-1}}{n^{t-1}} c_1 + 
\binom{n^{t-1}-2}{t-2} \frac{b^{t-2}}{n^{t-2}} c_2 + \cdots + c_{t}. 
\end{equation} 
The leading term can be rewritten as 
\begin{equation}
\label{equation-bg-1}
\frac{1}{n}(n^{t-1}-1)(n^{t-1}-2) \cdots (n^{t-1}-(t-1)) \omega,  
\end{equation} 
where $\omega =  x_1 \cup y_2 \cup x_2 \cup y_3 \cup \cdots \cup x_{t} \cup y_{t+1}$. 
Note that if (and only if) $n$ does not divide $(t-1)!$, then~\eqref{equation-bg-1} is not integral. 
We claim that for $1 \leq j \leq t$ any element in $b^{t-j} \rH^{j,j}(X, \bQ)$, when expressed in terms of the basis for $\rH^{2t}(X, \bZ)$ given by monomials in the $x_i$ and $y_i$, has no $\omega$ term. 
Together with the previous observation about the non-integrality of~\eqref{equation-bg-1}, this 
implies that there do not exist $c_j \in \rH^{j,j}(X, \bQ)$ making~\eqref{pci} integral when $n$ does not divide $(t-1)!$. 

It remains to prove the above claim. 
Note that $b^{t-j}$ is contained in the span of monomials of the form 
\begin{equation*}
x_{k_1} \cup y_{k_1 +1} \cup x_{k_2} \cup y_{k_2+1} \cup \cdots \cup x_{k_{t-j}} \cup y_{k_{t-j}+1}, 
\quad 1 \leq k_1 < \cdots <k_{t-j} \leq t. 
\end{equation*} 
Since $t-j < t$, any such monomial must not include the factor $x_k \cup y_{k+1}$ for some $1 \leq k \leq t$. 
Using Lemma~\ref{lemma-generic-product}\eqref{Hodge-prod-Ei}, 
it is then easy to see that the product of such a monomial with any element of $\rH^{j,j}(X, \bQ)$ does not contain an $\omega$ term. 
\end{proof} 

\begin{remark}
By a similar but easier argument, one can also prove a version of Proposition~\ref{proposition-index-av} with $X$ replaced by a very general abelian variety. This would also suffice for the construction of counterexamples to the integral Hodge conjecture as Severi--Brauer varieties over $X$ in \S\ref{section-IHC} below. 
The case when $X$ is a product of elliptic curves as above is especially nice because 
then $X$ itself satisfies the integral Hodge conjecture (Lemma~\ref{lemma-generic-product}), and because 
Proposition~\ref{proposition-index-av} transports well to $\overline{\bF}_p$, as we explain in Proposition~\ref{proposition-index-av-Fp}. 
\end{remark}

\subsection{Another divisibility obstruction} 
\label{section-index-via-0-cycles}
Let $\alpha \in \Br(X)$ where $X$ is a smooth projective variety. 
The paradigm behind Theorem~\ref{theorem-divisibility}, as well as the definition of $\ind_{\Hdg}(\alpha)$, is to take a characterization of the index of $\alpha \vert_{k(X)}$ over the generic point, translate it to a global condition on $X$, 
and then consider the Hodge-theoretic avatar of the global condition. 
Namely, the description of $\ind(\alpha)$ in terms of twisted sheaves from Lemma~\ref{lemma-index-unramified} leads to $\ind_{\Hdg}(\alpha)$, while the description in terms of twisted linear subspaces from Lemma~\ref{lemma-index-SB} leads to Theorem~\ref{theorem-divisibility}. 
It is natural to also consider the description of $\ind(\alpha)$ in terms of $0$-cycles on a Severi--Brauer variety from Lemma~\ref{lemma-index-SB}. 
As we will see, this point of view tends to lead to weaker divisibility obstructions than Theorem~\ref{theorem-divisibility}, but as we explain in \S\ref{section-IHC}, this discrepancy is interesting because it leads to non-algebraic integral Hodge classes. 

To formulate the obstruction we get in this way, 
for $B \in \rH^2(X, \bQ)$ and positive integers $r$ and $e$, we define for $1 \leq i \leq \min \set{ r, \dim(X) }$ the polynomial 
\begin{equation}
\label{qBre}
    q_i^{B,r,e}(x_1, \dots, x_i) =
    e\binom{r}{i}B^i + 
    \sum_{j=1}^{i} \binom{r-j}{i-j} B^{i-j} x_j  \in \rH^*(X, \bQ)[x_1, \dots, x_i] , 
\end{equation}
similar to $p_i^{B,e}$ from~\eqref{pBe}. 
Then arguing exactly as in the proof of Theorem~\ref{theorem-divisibility} gives the following result, already implicit in \cite[\S6.1]{hotchkiss-pi}. 

\begin{proposition}
\label{proposition-0-cycle}
Let $X$ be a smooth projective variety with $\alpha \in \Br(X)$. 
Let $P \to X$ be a Severi--Brauer variety of class $\alpha$ and relative dimension $r$. 
Let $e$ be an integer. 
\begin{enumerate}
\item \label{0-cycle-1} 
$\ind(\alpha)$ divides $e$ if and only if there exists a cycle $Z \in \CH^r(P)$ 
whose generic fiber $Z_{\eta} \in \CH_0(P_{\eta})$ is a $0$-cycle of degree $e$. 
In particular, if $\ind(\alpha)$ divides $e$, there exists an integral Hodge class $\delta \in \rH^{r,r}(P, \bZ)$ whose restriction to fibers has degree $e$. 
\item 
\label{0-cycle-top-triv}
If $\alpha$ is topologically trivial and $B \in \rH^2(X, \bQ)$ is a rational $B$-field for $\alpha$, 
then there exists a class $\delta \in \rH^{r,r}(P, \bZ)$ satisfying the above condition if and only if there exist Hodge classes 
\begin{equation*}
c_j \in \rH^{j,j}(X, \bQ), \quad 1 \leq j \leq \min \set{ r, \dim(X) }, 
\end{equation*} 
such that  
$q_i^{B,r,e}(c_1, \dots, c_i) \in \rH^{2i}(X, \bQ)$ is integral for all $1 \leq i \leq \min \set{ r, \dim(X) }$. 
Explicitly, $\delta$ and the $c_j$ are related by  
\begin{align*}
    \delta & = e(h+\pi^*(B))^r + \pi^*(c_1)(h + \pi^*(B))^{r-1} + \cdots + \pi^*(c_m)(h+\pi^*(B))^{r-m} \\ 
    & = eh^r + \pi^*(q_1^{B,r,e}(c_1))h^{r-1} + 
    \pi^*(q_2^{B,r,e}(c_1,c_2))h^{r-2} + \cdots + \pi^*(q_{m}^{B,r,e}(c_1, \dots, c_m)) h^{r - m} , 
\end{align*} 
where $m = \min \set{ r, \dim(X) }$. 
\end{enumerate}
\end{proposition}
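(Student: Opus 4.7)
The plan is to adapt the proof of Theorem~\ref{theorem-divisibility} essentially line-by-line, replacing the ``twisted linear subvariety of codimension $e$'' characterization of the index by the ``$0$-cycle of degree $e$'' characterization supplied by Lemma~\ref{lemma-index-SB}\eqref{index-0-cycle}. No step is genuinely obstructive; everything reduces to the same binomial bookkeeping already developed in Theorem~\ref{theorem-divisibility}.

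For part~\eqref{0-cycle-1}, note that the generic fiber $P_\eta$ is a Severi--Brauer variety over $k(X)$ of class $\alpha \vert_{k(X)}$, so by Lemma~\ref{lemma-index-SB}\eqref{index-0-cycle} in its $\gcd$ form, $\ind(\alpha)$ equals the positive generator of the subgroup of $\bZ$ spanned by the degrees of $0$-cycles on $P_\eta$. Hence $\ind(\alpha)$ divides $e$ if and only if there is a $0$-cycle $Z_\eta \in \CH_0(P_\eta)$ of degree exactly $e$. Given such a $Z_\eta$, we would produce $Z \in \CH^r(P)$ by taking the closure of its components in $P$ and the corresponding $\bZ$-linear combination; the codimension is correct because closed points of $P_\eta$ are $0$-dimensional over $k(X)$ and thus have closures of dimension $\dim(X)$. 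Conversely, any $Z \in \CH^r(P)$ restricts to a $0$-cycle $Z_\eta$ on $P_\eta$ of the same degree. The cohomology class $\delta = \cl(Z) \in \rH^{r,r}(P,\bZ)$ is then an integral Hodge class, and its restriction to each fiber has degree $e$ since the degree of a $0$-cycle is topologically locally constant in families.

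For part~\eqref{0-cycle-top-triv}, the idea is to apply Lemma~\ref{lemma-cohomology-SB}, which supplies two complementary decompositions of $\rH^{2r}(P)$: an integral one in powers of $h$, and a rational Hodge-theoretic one in powers of $h + \pi^*B$. Any $\delta \in \rH^{r,r}(P,\bZ)$ whose fiber restrictions have degree $e$ therefore admits two parallel expansions,
\[
\delta = e\, h^r + \sum_{i=1}^{r} \pi^*(b_i)\, h^{r-i}, \qquad b_i \in \rH^{2i}(X,\bZ),
\]
and
\[
\delta = e(h+\pi^*B)^r + \sum_{j=1}^{r} \pi^*(c_j)\,(h+\pi^*B)^{r-j}, \qquad c_j \in \rH^{j,j}(X,\bQ).
\]
The leading coefficient $e$ is the same in both expansions because $\pi^*$ of a positive-degree class on $X$ cannot contribute to the top power $h^r$, and the fiber restriction pins this coefficient down to be $e$ times the class of a point in $\rH^{2r}(\bP^r,\bZ)$. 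Expanding the second expression as a polynomial in $h$ via the binomial theorem and comparing with the first then yields $b_i = q_i^{B,r,e}(c_1,\dots,c_i)$ for $1 \leq i \leq \min\set{r,\dim(X)}$, which is exactly the stated integrality condition; the converse direction is the same formula read backwards. The one nontrivial step is to verify that the binomial expansion produces precisely the polynomial $q_i^{B,r,e}$ from~\eqref{qBre}, a short direct computation mirroring the one behind Theorem~\ref{theorem-divisibility}.
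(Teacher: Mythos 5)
Your proof is correct and takes the same approach the paper intends. The paper itself gives no separate proof of Proposition~\ref{proposition-0-cycle}: the text immediately preceding it says the result follows by ``arguing exactly as in the proof of Theorem~\ref{theorem-divisibility},'' and that is precisely what you do. Part~\eqref{0-cycle-1} is Lemma~\ref{lemma-index-SB}\eqref{index-0-cycle} (in its $\gcd$ form) plus taking closures, and part~\eqref{0-cycle-top-triv} is the comparison of the integral $h$-basis expansion from Lemma~\ref{lemma-cohomology-SB} with the rational $(h+\pi^*B)$-expansion, which produces exactly the polynomials $q_i^{B,r,e}$ after the binomial bookkeeping. The only small cosmetic remark: you write the second expansion with $j$ running up to $r$, which is harmless since $\rH^{j,j}(X,\bQ)=0$ for $j>\dim(X)$, but the paper's indexing makes the truncation at $m=\min\set{r,\dim(X)}$ explicit.
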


\begin{remark}
\label{remark-zero-cyle-obstruction}
Interestingly, it is easy to see that Proposition~\ref{proposition-0-cycle} does \emph{not} lead to a potential obstruction to the period-index conjecture similar to the one discussed in \S\ref{section-obstruction-PI}. 
Namely, if 
$b \in \rH^2(X, \bZ)$ is an integral $B$-field of degree $n =  \per(\alpha)$ for $\alpha$ and 
$B = b/n$ is the corresponding rational $B$-field, then Hodge classes $c_j$ as in Proposition~\ref{proposition-0-cycle}\eqref{0-cycle-top-triv} for $e = n^{\dim(X)-1}$ exist: take $c_j = 0$ for $j < \dim(X)$ and, if $\dim(X) \leq r$,  
$c_{\dim(X)} \in \rH^{2\dim(X)}(X, \bQ) = \bQ$ suitably so that $q_{\dim(X)}^{B,r,e}(0,\dots, 0, c_{\dim(X)}) = 0$; explicitly, $c_{\dim(X)} = - n^{\dim(X)-1} \binom{r}{\dim(X)} B^{\dim(X)}$. 
\end{remark} 

Similar to \S\ref{section-large-index}, we can attempt to use Proposition~\ref{proposition-0-cycle} 
to construct Brauer classes of large index. 
However, this seems to generally give worse bounds than those obtained from Theorem~\ref{theorem-lower-bound}.  
We illustrate this with two examples below. 

\begin{example}
Let us work out the analog of Example~\ref{example-sharp-dim3}. 
Let $X$ be a smooth projective complex threefold, 
let $\alpha \in \Br(X)$ be a topologically trivial class of period $n$, and 
write $\alpha = \exp(b/n)$ for some $b \in \rH^2(X, \bZ)$. 
Let $P \to X$ be a Sever--Brauer variety of class $\alpha$ and relative dimension $r$. 
Using Proposition~\ref{proposition-0-cycle}, we find that if $\ind(\alpha)$ divides $n$ then 
there exists a Hodge class $\delta \in \rH^{r,r}(P, \bZ)$ of degree $n$ on fibers, 
and such a $\delta$ exists if and only if there exist Hodge classes $c \in \rH^{1,1}(X, \bZ)$ and $d \in \rH^{2,2}(X, \bZ)$ such that 
\begin{equation*}
b^2 \equiv 2bc + d \pmod{n} . 
\end{equation*} 
This is weaker than the criterion of Example~\ref{example-sharp-dim3}. 
\end{example} 

\begin{lemma}
\label{lemma-weaker-bound-AV}
Let $X$ be a complex abelian variety of dimension $g$. 
Let $\alpha \in \Br(X)[n]$ be an $n$-torsion Brauer class, and let 
$\pi \colon P \to X$ be a Severi--Brauer variety of class $\alpha$ and relative dimension $r$. 
If $n$ divides $(g-1)!$, then there exists an integral Hodge class $\delta \in \rH^{r,r}(P, \bZ)$ whose 
restriction to fibers has degree $n^{g-2}$. 
More precisely, if $b \in \rH^{2}(X, \bZ)$ is an integral $B$-field of degree $n$ for $\alpha$, then the class 
\begin{equation}
\label{delta-av}
\delta = 
\begin{cases}
\sum_{i=0}^{g-1} \pi^* \left( n^{g-2-i}r(r-1)\cdots(r-(i-1)) \frac{b^i}{i!} \right) h^{r-i} & \text{if }  g \leq r \\
\sum_{i=0}^{r} \pi^* \left( n^{g-2-i}r(r-1)\cdots(r-(i-1)) \frac{b^i}{i!} \right) h^{r-i} & \text{if }  r < g. 
\end{cases} 
\end{equation} 
satisfies the following:
\begin{enumerate}
\item $\delta$ is an integral Hodge class, i.e. $\delta \in \rH^{r,r}(P, \bZ)$. 
\item $\delta$ restricts to the fibers of $\pi \colon P \to X$ as a degree $n^{g-2}$ class. 
\item $n^{r-g-2} \delta$ is algebraic if $r>g-2$, and $\delta$ is algebraic if $r \leq g-2$. 
\end{enumerate} 
\end{lemma}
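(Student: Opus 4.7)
The plan is to invoke Proposition~\ref{proposition-0-cycle}\eqref{0-cycle-top-triv} with $e = n^{g-2}$. Since $X$ is an abelian variety, $\rH^3(X,\bZ)$ is torsion free, so Lemma~\ref{lemma-top-triv}\eqref{top-triv-3} ensures that every Brauer class on $X$ is topologically trivial and $\alpha$ admits a rational $B$-field $B = b/n$ for any integral $B$-field $b$ of degree $n$. I would then choose the Hodge classes $c_j \in \rH^{j,j}(X,\bQ)$ required by the proposition in the most economical way: take $c_j = 0$ for $1 \le j < g$, together with $c_g = -e\binom{r}{g}B^g \in \rH^{g,g}(X,\bQ) = \bQ$ in the case $g \le r$ (to force $q_g^{B,r,e} = 0$). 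With this choice, for $1 \le i < g$ the polynomial $q_i^{B,r,e}$ reduces to $n^{g-2-i}\binom{r}{i} b^i$, and a direct expansion of $e(h+\pi^*B)^r + \pi^*(c_g)(h+\pi^*B)^{r-g}$ reproduces the claimed formula~\eqref{delta-av} for $\delta$. The fiberwise degree claim~(2) is then immediate: only the top term $e \cdot h^r$ survives restriction to a fiber, giving degree $n^{g-2}$.

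The heart of~(1) is the case $i = g-1$, where $q_{g-1}^{B,r,e} = n^{-1}\binom{r}{g-1} b^{g-1}$. I would rewrite this as
\begin{equation*}
\bigl(n^{-1}\, r(r-1)\cdots(r-g+2)\bigr) \cdot \bigl(b^{g-1}/(g-1)!\bigr).
\end{equation*}
The first factor is an integer because $(g-1)!$ divides any product of $g-1$ consecutive integers and, by hypothesis, $n$ divides $(g-1)!$. For the second factor, I would invoke the classical divided-power identity that on any free $\bZ$-module $\Lambda$, the class $b^k/k!$ associated to $b \in \bigwedge^2 \Lambda$ is integral in $\bigwedge^{2k}\Lambda$; this is elementary in itself (multinomial expansion combined with $(e_i\wedge e_j)^2 = 0$ forces the factor $k!$ to appear in every surviving monomial). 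Applied with $\Lambda = \rH^1(X,\bZ)$ and using $\rH^{2k}(X,\bZ) = \bigwedge^{2k}\Lambda$ on an abelian variety, this yields $b^{g-1}/(g-1)! \in \rH^{2(g-1)}(X,\bZ)$. The cases $i \le g-2$ require no divisibility input since then $n^{g-2-i}$ is already a non-negative integer power of $n$.

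For the algebraicity in~(3), the plan is to exploit that $H := nh + \pi^*b$ is algebraic by Lemma~\ref{lemma-cohomology-SB}\eqref{nhb-algebraic}, together with the fact that $\pi^*(b^g)$ is algebraic (since $\rH^{2g}(X,\bZ) = \bZ$ is generated by the algebraic fundamental class). Substituting $h + \pi^*B = H/n$ in $\delta = e(h+\pi^*B)^r + \pi^*(c_g)(h+\pi^*B)^{r-g}$ and using $\pi^*(b^g)\cdot \pi^*(b) = 0$ to collapse $\pi^*(b^g)(h+\pi^*B)^{r-g}$ into $n^{g-r}\pi^*(b^g) H^{r-g}$, a short manipulation exhibits a suitable power of $n$ times $\delta$ as an explicit $\bZ$-linear combination of $H^r$ and $\pi^*(b^g) H^{r-g}$, both algebraic. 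When $r \le g-2$ the adjustment term vanishes (as $\binom{r}{g} = 0$) and $\delta = n^{g-2-r}H^r$ is itself algebraic.

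The main obstacle, in my view, is isolating the correct divided-power integrality of $b^{g-1}/(g-1)!$ on the abelian variety and verifying that the arithmetic factor $n^{-1}r(r-1)\cdots(r-g+2)$ is simultaneously an integer; once those two divisibilities are combined, everything else is a formal consequence of Proposition~\ref{proposition-0-cycle} and the algebraicity of $H = nh + \pi^*b$.
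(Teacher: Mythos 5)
Your proof takes the same route as the paper's: invoke Proposition~\ref{proposition-0-cycle}\eqref{0-cycle-top-triv} with $e = n^{g-2}$, choose $c_j = 0$ for $1 \le j \le g-1$, and use the integrality of $b^i/i!$ on an abelian variety. The one genuine (and pleasant) difference is in the divisibility argument for the boundary case $i = g-1$: the paper observes that $r \equiv -1 \pmod{n}$ (via Lemma~\ref{lemma-index-SB}, since $\per(\alpha) \mid \ind(\alpha) \mid r+1$) and so $r(r-1)\cdots(r-(g-2)) \equiv \pm(g-1)! \pmod{n}$, while you instead use the elementary fact that $(g-1)!$ divides any product of $g-1$ consecutive integers and combine it with $n \mid (g-1)!$. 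Your version is marginally cleaner in that it does not need the congruence on $r$, though both are correct; the paper's version has the virtue of making explicit where the hypothesis $n \mid (g-1)!$ is actually \emph{equivalent} to the required divisibility. Finally, your computation for part (3) produces $n^{\,r-g+2}\delta$ algebraic, obtained by writing $\delta = n^{\,g-r-2}\bigl(H^r - \binom{r}{g}\pi^*(b^g)H^{r-g}\bigr)$ with $H = nh + \pi^*b$; this is indeed what falls out of Proposition~\ref{proposition-0-cycle} and Lemma~\ref{lemma-cohomology-SB}\eqref{nhb-algebraic}, and it is consistent with the $r \le g-2$ case (where the exponent degenerates to $0$). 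The exponent $r-g-2$ printed in the lemma statement appears to be a sign typo for $r - (g-2) = r-g+2$; your derivation is the correct one.
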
 

\begin{remark} 
This should be contrasted with Proposition~\ref{proposition-index-av}, 
where it is shown that if $n$ does not divide $(g-2)!$ (but possibly $(g-1)!$), then 
Theorem~\ref{theorem-lower-bound} obstructs $\ind(\alpha)$ dividing $n^{g-2}$ for certain choices of abelian varieties $X$ and $\alpha$. 
\end{remark} 

\begin{remark}
As the proof below shows, the hypothesis that $n$ divides $(g-1)!$ is not needed when $r \leq g-2$; 
moreover, in this case by Proposition~\ref{proposition-0-cycle}\eqref{0-cycle-1}, $\ind(\alpha)$ divides $n^{g-2}$. 
The more interesting case is when $r$ is large relative to $g$ and $\ind(\alpha)$ does not divide $n^{g-2}$, 
as this will be needed for our counterexamples to the integral Hodge conjecture constructed in \S\ref{section-IHC}. 
\end{remark} 

\begin{proof}
Note that $\alpha$ is topologically trivial because $\rH^3(X, \bZ)$ is torsion-free. 
If we set $B = b/n$ and $e = n^{g-2}$, then 
by Proposition~\ref{proposition-0-cycle}\eqref{0-cycle-top-triv} 
it suffices to check that there exist Hodge classes $c_j \in \rH^{j,j}(X, \bQ)$ such that 
$q_i^{B,r,e}(c_1, \dots, c_i) \in \rH^{2i}(X, \bQ)$ is integral for $1 \leq i \leq \min \set{r, g}$. 
The leading term of $q_i^{B,r,e}(c_1, \dots, c_i)$ can be written as   
\begin{equation}
\label{qi-leading-term}
n^{g-2-i} r(r-1) \cdots (r-(i-1)) \frac{b^i}{i!}. 
\end{equation} 
As $X$ is an abelian variety, $\frac{b^i}{i!}$ is an integral class. 
The coefficient $n^{g-2-i} r(r-1) \cdots (r-(i-1))$ is obviously integral when $1 \leq i \leq g-2$. 
For $i = g-1$, it is integral when $n$ divides $r(r-1)\cdots(r-(g-2))$, 
or equivalently when $n$ divides $(g-1)!$ as $r \equiv -1 \pmod{n}$ (see Lemma~\ref{lemma-index-SB}). 
Altogether, this shows that if $n$ divides $(g-1)!$, then we may take $c_j = 0$ for $1 \leq j \leq g-1$ 
to make $q_i^{B,r,e}(c_1, \dots, c_i) \in \rH^{2i}(X, \bQ)$ integral for $1 \leq i \leq g-1$. 
We can then take $c_g \in \rH^{2g}(X, \bQ) = \bQ$ suitably so that $q_g^{B,r,e}(0, \dots, 0, c_g) = 0$. 

The explicit formula for $\delta$ and the fact that it satisfies the 
desired properties follows from the description of $\delta$ in terms of the $c_j$ in Proposition~\ref{proposition-0-cycle}\eqref{0-cycle-top-triv} 
(cf. the proof of Lemma~\ref{lemma-PI-obstruction-vanish}). 
\end{proof} 

\subsection{Counterexamples to the integral Hodge conjecture} 
\label{section-IHC} 
Our main interest in Proposition~\ref{proposition-0-cycle} is that it can be used to construct counterexamples to the integral Hodge conjecture, which states the following. 

\begin{conjecture}[Integral Hodge conjecture in codimension $c$]
\label{conjecture-IHC} 
Let $X$ be a smooth projective complex variety. 
Then the cycle class map $\CH^c(X) \to \rH^{c,c}(X, \bZ)$ is surjective. 
\end{conjecture}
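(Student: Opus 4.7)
The plan is to proceed by induction on the codimension $c$, reducing to the two classical cases. For $c = 1$, the Lefschetz $(1,1)$-theorem combined with the exponential sequence $\rH^1(X, \cO_X^\times) \to \rH^2(X, \bZ) \to \rH^2(X, \cO_X)$ gives surjectivity immediately: every integral $(1,1)$-class lifts to an element of $\Pic(X) = \CH^1(X)$ and its first Chern class realizes it cohomologically. For $c = \dim(X)$, surjectivity of the degree map $\CH^{\dim(X)}(X) \twoheadrightarrow \rH^{2\dim(X)}(X, \bZ) \cong \bZ$ is immediate from the existence of a closed point. The substantive range is therefore $1 < c < \dim(X)$.

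For intermediate codimension, I would first establish surjectivity after tensoring with $\bQ$---the rational Hodge conjecture in codimension $c$---by the standard induction on $\dim(X)$ via smooth hyperplane sections and the weak Lefschetz theorem, which in the appropriate range carries Hodge classes injectively (resp.\ surjectively) between $X$ and a smooth section $Y$. Then, given $\gamma \in \rH^{c,c}(X, \bZ)$, pick $N > 0$ and a cycle $Z \in \CH^c(X)$ with $[Z] = N\gamma$. The remaining task is to ``divide by $N$'' integrally: produce an algebraic representative of $\gamma$ itself. A natural attempt is to exhibit $\gamma$ as a polynomial in Chern classes of an explicit coherent sheaf on $X$, constructed via deformation theory on a well-chosen moduli space of simple sheaves whose Mukai vector matches $\gamma$; this is the approach that has succeeded in various low-dimensional instances (uniruled threefolds, certain Fano varieties, abelian varieties).

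The main obstacle is precisely the clearing-of-denominators step: there is no general mechanism ensuring that an integer multiple of $\gamma$ being algebraic forces $\gamma$ itself to be algebraic. This is where the conjecture genuinely resists classical techniques, and the cokernel of $\CH^c(X) \to \rH^{c,c}(X, \bZ)$ is not detected by any of the cohomological tools used in the inductive setup above. Indeed, the subsequent portion of the paper (in \S\ref{section-IHC}) is devoted to detecting this obstruction on Severi--Brauer varieties via Proposition~\ref{proposition-0-cycle}: an integral Hodge class on $P \to X$ whose fiberwise degree is strictly smaller than $\ind([P])$ cannot be algebraic, since algebraicity would produce a $0$-cycle on the generic fiber of degree bounded by the index. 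Thus, while the inductive Chern-class strategy sketched above is the natural template for a proof, the integrality gap it leaves open is exactly the one the paper's counterexamples exploit, and bridging that gap in full generality would require fundamentally new ideas beyond what Hodge theory alone provides.
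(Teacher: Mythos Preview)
The statement you are attempting to prove is a conjecture that the paper explicitly declares to be false: immediately after stating Conjecture~\ref{conjecture-IHC}, the paper writes ``Although called a conjecture, it is well-known to be false in general.'' There is therefore no proof in the paper to compare against, and no correct proof is possible. The paper's purpose in stating the conjecture is precisely to construct new counterexamples to it (Theorem~\ref{theorem-IHC}), not to establish it.

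Your proposal contains a further substantive error beyond this framing issue. You write that you would ``first establish surjectivity after tensoring with $\bQ$---the rational Hodge conjecture in codimension $c$---by the standard induction on $\dim(X)$ via smooth hyperplane sections and the weak Lefschetz theorem.'' This is not possible: the rational Hodge conjecture is itself one of the Clay Millennium Problems and is wide open in codimension $c \geq 2$. Weak Lefschetz lets you transport cohomology classes between $X$ and a hyperplane section $Y$, but it does not produce algebraic cycles out of Hodge classes; the problem on $Y$ is just as hard as on $X$. So the inductive scheme you describe does not even get off the ground rationally, let alone integrally.

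To your credit, your final paragraph correctly identifies that the integrality step is where the real obstruction lives and accurately summarizes how the paper exploits this via Proposition~\ref{proposition-0-cycle}. But this observation should have led you to conclude at the outset that no proof exists, rather than presenting a sketch whose first nontrivial step already assumes an open conjecture.
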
 

Although called a conjecture, it is well-known to be false in general. 
Atiyah and Hirzebruch constructed the first counterexamples \cite{AH}, but since then there have been a plethora of others. 
To understand how far the conjecture is from failing, one would like counterexamples on varieties that are as ``nice'' as possible. 
Hotchkiss \cite[\S6]{hotchkiss-pi} introduced a method for producing particularly simple counterexamples on Severi--Brauer varieties. 
To formalize this, 
it is convenient to introduce some notation. 

\begin{definition}
\label{definition-ind-HP}
Let $X$ be a smooth projective complex variety. 
Let $\alpha \in \Br(X)$ and let $P \to X$ be a Severi--Brauer variety of class $\alpha$ and relative dimension $r$. 
The \emph{Hodge-theoretic index of $\alpha$ with respect to $P$} is 
\begin{equation*}
\ind_{\Hdg}^P(\alpha) \coloneqq \min  \set{ e \st \exists ~ \delta \in \rH^{r,r}(P, \bZ) \text{ of degree $e > 0$ on fibers of $P \to X$} }. 
\end{equation*} 
\end{definition} 

Note that it follows from Proposition~\ref{proposition-0-cycle} that $\ind_{\Hdg}^P(\alpha)$ divides $\ind(\alpha)$. 
\begin{lemma}
Let $X$ be a smooth projective complex variety. 
Let $\alpha \in \Br(X)$ and let $P \to X$ be a Severi--Brauer variety of class $\alpha$ and relative dimension $r$. 
If $\ind_{\Hdg}^P(\alpha) < \ind(\alpha)$, 
then the integral Hodge conjecture in codimension $r$ fails on $P$. 
More precisely, if $\delta \in \rH^{r,r}(P, \bZ)$ is a class whose degree on fibers is not divisible by $\ind(\alpha)$, then 
$\delta$ is not in the image of the cycle class map $\CH^r(P) \to  \rH^{r,r}(P, \bZ)$. 
\end{lemma}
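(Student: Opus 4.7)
The plan is to prove the sharper second statement; the first assertion follows from it since any positive integer not divisible by $\ind(\alpha)$ is in particular different from, hence (by Proposition~\ref{proposition-0-cycle}\eqref{0-cycle-1} applied to any power of $\ind(\alpha)$) strictly smaller than, $\ind(\alpha)$. I will argue by contrapositive: if $\delta \in \rH^{r,r}(P, \bZ)$ lies in the image of the cycle class map, then its degree on the fibers of $\pi \colon P \to X$ is divisible by $\ind(\alpha)$.

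First, write $\delta = \cl(Z)$ for some $Z \in \CH^r(P)$. Flat restriction to the generic point of $X$ produces a $0$-cycle $Z_\eta \in \CH^r(P_\eta) = \CH_0(P_\eta)$ on the Severi--Brauer variety $P_\eta$ over $k(X)$, which has class $\alpha\vert_{k(X)}$. The only non-formal step is to identify two notions of ``degree'': the degree $\deg(Z_\eta)$ of $Z_\eta$ as a $0$-cycle on $P_\eta$, and the integer $e$ recording the degree of $\delta$ on a geometric fiber of $\pi$ in the sense of Definition~\ref{definition-ind-HP}.

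For this, observe that $\pi$ is smooth and proper of relative dimension $r$, so proper pushforward sends $[Z] \in \CH^r(P)$ to $\pi_*[Z] \in \CH^0(X) \cong \bZ$, and this integer equals $\deg(Z_\eta)$ by compatibility of proper pushforward with flat pullback to the generic point. Under the cycle class map, which commutes with proper pushforward, the same integer is recorded by $\pi_* \delta \in \rH^0(X, \bZ) \cong \bZ$, and by definition of the fiber-degree this integer is $e$. Therefore $\deg(Z_\eta) = e$.

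Finally, invoke Proposition~\ref{proposition-0-cycle}\eqref{0-cycle-1}: the existence of a $0$-cycle of degree $e$ on $P_\eta$ (necessarily nonzero, since $\ind(\alpha) \nmid e$) forces $\ind(\alpha\vert_{k(X)}) = \ind(\alpha)$ to divide $e$, contradicting the hypothesis. I do not anticipate a real obstacle here; the only substantive point is the matching of degrees, and that is a standard consequence of the compatibility of cycle classes with proper pushforward.
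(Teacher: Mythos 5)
Your argument for the "more precisely" statement is correct and is essentially the paper's argument: restrict the cycle $Z$ to the generic fiber, match the Chow-theoretic degree of $Z_\eta$ with the cohomological fibral degree of $\delta$ via compatibility of cycle classes and pushforward, and invoke Proposition~\ref{proposition-0-cycle}\eqref{0-cycle-1}. The paper's proof is a one-liner that leaves the degree-matching implicit; you spell it out, which is fine.

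One small slip: your reduction of the first assertion to the second has the implication stated backwards. You write that "any positive integer not divisible by $\ind(\alpha)$ is \dots strictly smaller than $\ind(\alpha)$," which is false (e.g.\ $e = \ind(\alpha)+1$), and the parenthetical invocation of Proposition~\ref{proposition-0-cycle}\eqref{0-cycle-1} is irrelevant here. What you actually need is the trivial converse: since $\ind_{\Hdg}^P(\alpha)$ is by definition a positive integer, the hypothesis $0 < \ind_{\Hdg}^P(\alpha) < \ind(\alpha)$ directly forces $\ind(\alpha) \nmid \ind_{\Hdg}^P(\alpha)$, so the second assertion applies to a class $\delta$ realizing the minimum fibral degree. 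The intended idea is clearly correct; only the stated justification is inverted.
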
 

\begin{proof}
Indeed, if $\delta$ were the class of a cycle $Z \in \CH^r(P)$, then $\ind(\alpha)$ would divide the fibral degree of $\delta$ by  Proposition~\ref{proposition-0-cycle}\eqref{0-cycle-1}. 
\end{proof} 

Together with Lemma~\ref{lemma-weaker-bound-AV}, we obtain the following criterion for Severi--Brauer varieties 
over an abelian variety to violate the integral Hodge conjecture, generalizing the $g = 3$ case treated in \cite[Theorem 6.1]{hotchkiss-pi}. 
\begin{corollary}
\label{corollary-ihc-fail-av}
Let $X$ be a complex abelian variety of dimension $g$. 
Let $\alpha \in \Br(X)[n]$ be an $n$-torsion Brauer class such that 
$n$ divides $(g-1)!$ and
$\ind(\alpha)$ does not divide $n^{g-2}$. 
Then for any Severi--Brauer variety $P \to X$ of relative dimension $r$, 
the integral Hodge conjecture in codimension $r$ fails on $P$. 
More precisely, the class $\delta \in \rH^{r,r}(P, \bZ)$ from~\eqref{delta-av} is not algebraic. 
\end{corollary}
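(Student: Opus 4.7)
The plan is to assemble this corollary directly from the two ingredients developed just before it, namely Lemma~\ref{lemma-weaker-bound-AV} (which produces the explicit Hodge class $\delta$) and Proposition~\ref{proposition-0-cycle}\eqref{0-cycle-1} (which translates algebraicity of such a class into a divisibility of $\ind(\alpha)$). The hypothesis that $n$ divides $(g-1)!$ is exactly what is needed to invoke the former, and the hypothesis that $\ind(\alpha)$ does not divide $n^{g-2}$ is exactly what is needed to contradict the conclusion of the latter.

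First I would fix a rational $B$-field for $\alpha$: since $X$ is an abelian variety, $\rH^3(X,\bZ)$ is torsion-free, so $\alpha$ is topologically trivial, and we may pick an integral $B$-field $b \in \rH^2(X,\bZ)$ of degree $n$ with $B = b/n$. Then Lemma~\ref{lemma-weaker-bound-AV} applies verbatim and supplies the integral Hodge class $\delta \in \rH^{r,r}(P,\bZ)$ of~\eqref{delta-av}, whose restriction to a fiber of $\pi\colon P \to X$ has degree $n^{g-2}$.

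Next I would argue by contradiction. Suppose $\delta$ lies in the image of the cycle class map $\CH^r(P) \to \rH^{r,r}(P,\bZ)$, say $\delta = \cl(Z)$ for some $Z \in \CH^r(P)$. Restricting to the generic fiber $P_\eta$, the $0$-cycle $Z_\eta \in \CH_0(P_\eta)$ has degree equal to the fibral degree of $\delta$, namely $n^{g-2}$. By Proposition~\ref{proposition-0-cycle}\eqref{0-cycle-1} this forces $\ind(\alpha)$ to divide $n^{g-2}$, contradicting the hypothesis. Therefore $\delta$ is not algebraic, and consequently Conjecture~\ref{conjecture-IHC} fails in codimension $r$ on $P$.

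There is essentially no obstacle: both the existence of the candidate Hodge class and the divisibility obstruction to its algebraicity have already been set up in the previous two results, so the corollary is a one-line combination. The only small care is to record that $\alpha$ is topologically trivial (needed to invoke Lemma~\ref{lemma-weaker-bound-AV}), which follows for free from $\rH^3(X,\bZ)$ being torsion-free on an abelian variety.
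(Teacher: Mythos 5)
Your proof is correct and takes essentially the same route as the paper. The paper factors the final contradiction step through the brief unnamed lemma following Definition~\ref{definition-ind-HP} (stating that $\ind_{\Hdg}^P(\alpha) < \ind(\alpha)$ forces failure of the integral Hodge conjecture), whose one-line proof is precisely your appeal to Proposition~\ref{proposition-0-cycle}\eqref{0-cycle-1}; otherwise the argument — topological triviality from torsion-freeness of $\rH^3(X,\bZ)$, applying Lemma~\ref{lemma-weaker-bound-AV} to produce $\delta$ of fibral degree $n^{g-2}$, and then contradicting the indivisibility hypothesis — is identical.
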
 

As explained in \S\ref{section-index-via-0-cycles}, $\ind_{\Hdg}^P(\alpha)$ can often be less than lower bounds on $\ind(\alpha)$ derived from Theorem~\ref{theorem-lower-bound}. Any such discrepancy therefore leads to counterexamples to the integral Hodge conjecture, 
as we illustrate by the following result. 

\begin{theorem}
\label{theorem-IHC} 
Let $X = E_1 \times \cdots \times E_g$ be a product of $g \geq 3$ pairwise non-isogenous complex elliptic curves. 
Let $n > 0$ be an integer such that $n$ divides $(g-1)!$ but not $(g-2)!$, 
and let $\alpha \in \Br(X)[n]$ be the class from Proposition~\ref{proposition-index-av} for $t = g-1$. 
Let $P \to X$ be a Severi--Brauer variety of class $\alpha$ and relative dimension $n^{g-1}-1$, which 
exists by Proposition~\ref{proposition-index-av}\eqref{ind-divide-ng-1}. 
Then the integral Hodge conjecture in codimension $n^{g-1}-1$ fails on $P$. 
More precisely, the class $\delta \in \rH^{r,r}(P, \bZ)$ with $r = n^{g-1}-1$ from~\eqref{delta-av} is not algebraic. 
\end{theorem}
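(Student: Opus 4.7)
The plan is to deduce Theorem~\ref{theorem-IHC} as a direct combination of Proposition~\ref{proposition-index-av} and Corollary~\ref{corollary-ihc-fail-av}. The idea is that Proposition~\ref{proposition-index-av} supplies, on products of pairwise non-isogenous elliptic curves, a Brauer class whose actual index is maximal in the period-index sense, while Lemma~\ref{lemma-weaker-bound-AV} (packaged into Corollary~\ref{corollary-ihc-fail-av}) produces an explicit integral Hodge class $\delta$ of fibral degree only $n^{g-2}$. The mismatch between these two quantities is exactly the obstruction to algebraicity of $\delta$.

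First I would apply Proposition~\ref{proposition-index-av}\eqref{ind-divide-ng-1} with $t=g-1$ to represent $\alpha$ by an Azumaya algebra of degree $n^{g-1}$; passing to the associated Severi--Brauer scheme yields a $P\to X$ of relative dimension $r=n^{g-1}-1$, as required. Next, the hypothesis that $n$ does not divide $(g-2)! = (t-1)!$ allows me to invoke Proposition~\ref{proposition-index-av}\eqref{ind-not-divide-ng-2}, which gives that $\ind(\alpha)$ does not divide $n^{g-2}$. Combined with the other hypothesis that $n$ divides $(g-1)!$, this places us in the setting of Corollary~\ref{corollary-ihc-fail-av}, whose conclusion --- that the explicit class $\delta$ of~\eqref{delta-av} is not algebraic, and hence the integral Hodge conjecture fails in codimension $r$ on $P$ --- is precisely the statement of Theorem~\ref{theorem-IHC}.

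There is no substantive obstacle to overcome once the two inputs are in place: Proposition~\ref{proposition-index-av}\eqref{ind-not-divide-ng-2} rests on the divisibility obstruction from Theorem~\ref{theorem-lower-bound} applied to a carefully chosen integral $B$-field on $X$, while Corollary~\ref{corollary-ihc-fail-av} comes from the degree-$n^{g-2}$ Hodge class constructed in Lemma~\ref{lemma-weaker-bound-AV} via the zero-cycle perspective of Proposition~\ref{proposition-0-cycle}. The only verification needed is that the arithmetic conditions $n \mid (g-1)!$ and $n \nmid (g-2)!$ are exactly what is required to trigger both inputs simultaneously at the threshold dimension $r = n^{g-1}-1$, producing the desired non-algebraic integral Hodge class.
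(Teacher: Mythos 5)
Your proposal matches the paper's proof exactly: the paper likewise applies Proposition~\ref{proposition-index-av}\eqref{ind-not-divide-ng-2} (with $t=g-1$, using $n \nmid (g-2)!$) to conclude $\ind(\alpha)$ does not divide $n^{g-2}$, and then invokes Corollary~\ref{corollary-ihc-fail-av} (which requires $n \mid (g-1)!$) to obtain the non-algebraic class $\delta$. Your additional remarks about the role of Lemma~\ref{lemma-weaker-bound-AV} and the zero-cycle perspective correctly identify the machinery already packaged inside the corollary.
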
 

\begin{proof}
By Proposition~\ref{proposition-index-av}\eqref{ind-not-divide-ng-2} $\ind(\alpha)$ does not divide $n^{g-2}$, so Corollary~\ref{corollary-ihc-fail-av} applies. 
\end{proof} 

For $g = 3$ and $n=2$, examples as in Theorem~\ref{theorem-IHC} were first constructed by Hotchkiss \cite[\S6.1]{hotchkiss-pi}, 
using a result of Gabber \cite[Appendice]{CT-examples} instead of Proposition~\ref{proposition-index-av}.  
Proposition~\ref{proposition-index-av} allows this argument to be extended to higher dimensions. 
Theorem~\ref{theorem-IHC} is also the model for Theorem~\ref{theorem-ITC}, which is its counterpart over $\overline{\bF}_p$. 
In that setting, the higher-dimensional case is crucial for making $\ell$-adic examples for every~$\ell$. 


\section{Lower bounds on the index over other fields} 
\label{section-lower-bound-k} 

In this section, we explain how to extend the results of \S\ref{section-lower-bound} to more general base fields. 
The main task is to define a notion of a ``topologically trivial'' Brauer class over an arbitrary algebraically closed base field, and describe the cohomology of Severi--Brauer varieties of such a class. 

We will focus on Brauer classes which are $\ell$-power torsion for a prime $\ell$ which is distinct from the characteristic $p$ of the base field. 
There is little loss of generality in doing so, as the study of the index of Brauer classes reduces to the case of prime-power torsion classes (see Lemma~\ref{lemma-index-bound-divisible} and its proof). 
Though we do not do so here, it should also be possible to prove analogs of our results for $\ell = p$ (see Remark~\ref{remark-crystalline}). 

\subsection{$\ell$-adic topologically trivial Brauer classes}
In Lemma~\ref{lemma-top-triv} we gave several equivalent characterizations of a topologically trivial Brauer class on a complex variety. 
The third has a natural analog in the $\ell$-adic setting: 

\begin{definition}
Let $X$ be a variety over an algebraically closed field $k$, and let $\ell \neq \characteristic(k)$ be a prime. 
An element $\alpha \in \Br(X)[\ell^{\infty}]$ is called an \emph{$\ell$-adic topologically trivial Brauer class} if it lies in the image of the natural composition 
\begin{equation}
\label{map-b-l}
    \rH^{2}_{\et}(X, \bZ_{\ell}(1)) \to \rH^2_{\et}(X, \bmu_{\ell^n}) \twoheadrightarrow \Br(X)[\ell^n]
\end{equation}
for some integer $n \geq 0$. 
If $b \in \rH^{2}_{\et}(X, \bZ_{\ell}(1))$ maps to $\alpha$ under the above composition, we say that $b$ is an \emph{integral $\ell$-adic $B$-field of degree $\ell^n$} for $\alpha$. 
\end{definition}

This notion was introduced (using slightly different terminology) in \cite[\S{3.2}]{LMS}. 
The subset $\Br_{\ell}^{\toptriv}(X) \subset \Br(X)[\ell^{\infty}]$ of $\ell$-adic topologically trivial classes is easily seen to be a subgroup \cite[Lemma 3.2.5]{LMS}. 

\begin{remark}
\label{remark-delta} 
For any integer $n \geq 0$, there is a surjective homomorphism 
\begin{equation*}
    \delta_{n} \colon \Br(X)[\ell^n] \to \rH^3_{\et}(X, \bZ_{\ell}(1))[\ell^n] 
\end{equation*}
defined by the commutative diagram 
\begin{equation*}
\xymatrix{
  0 \ar[r]  & \Pic(X)/\ell^n \ar[r] \ar[d]_{c_1} &  \rH^2_{\et}(X, \bmu_{\ell^n}) \ar[r] \ar@{=}[d] & \Br(X)[\ell^n] \ar[r] \ar[d]_{\delta_n} & 0 \\ 
    0\ar[r] & \rH^2(X, \bZ_{\ell}(1))/\ell^n \ar[r] &  \rH^2_{\et}(X, \bmu_{\ell^n}) \ar[r] & \rH^3_{\et}(X, \bZ_{\ell}(1))[\ell^n] \ar[r] & 0 
}
\end{equation*}
where the top row is given by the cohomology of the Kummer sequence and the second by the cohomology of $0 \to \bZ_{\ell}(1) \to \bZ_{\ell}(1) \to \bmu_{\ell^n} \to 0$. 
It is straighforward to check that the maps $\delta_n$ are compatible as $n$ varies, and hence give rise to a surjective homomorphism 
\begin{equation*}
    \delta \colon \Br(X)[\ell^{\infty}] \to \rH_{\et}^{3}(X, \bZ_{\ell}(1))_{\tors}. 
\end{equation*}
Moreover, it follows from the definitions that $\ker(\delta) = \Br_{\ell}^{\toptriv}(X)$ is precisely the subgroup of $\ell$-adic topologically trivial classes. 
This should be regarded as the $\ell$-adic analog of Lemma~\ref{lemma-top-triv}\eqref{top-triv-3}. 
\end{remark}

\begin{remark}
\label{ker-reduction} 
The above analysis also shows that there is an exact sequence 
\begin{equation*}
    \Pic(X) \oplus \ell^n \rH^2_{\et}(X, \bZ_{\ell}(1)) \to \rH^2_{\et}(X, \bZ_{\ell}(1)) \to \Br_{\ell}^{\toptriv}(X)[\ell^n] \to 0 
\end{equation*}
where the first map is given by $c_1$ on the first summand and the inclusion on the second, and the second map is given by~\eqref{map-b-l}.
\end{remark}

\begin{remark}
There is a surjective homomorphism  
\begin{equation*}
    \varepsilon \colon \rH^{2}_{\et}(X, \bQ_{\ell}(1)) \to \Br_{\ell}^{\mathrm{tt}}(X)
\end{equation*}
given as follows: for $B \in \rH^{2}_{\et}(X, \bQ_{\ell}(1))$, choose $n \geq 0$ so that $\ell^n B \in \rH^2(X, \bZ_{\ell}(1))$, and define $\varepsilon(B)$ to be the image of $\ell^nB$ under the map~\eqref{map-b-l}. The map $\varepsilon$ is easily seen to be well-defined, and by definition it is surjective. It should be regarded as the $\ell$-adic analog of the exponential map~\eqref{exp}. 
Given $\alpha \in \Br_{\ell}^{\mathrm{tt}}(X)$, we call an element $B \in \rH^{2}_{\et}(X, \bQ_{\ell}(1))$ such that $\varepsilon(B) = \alpha$ a \emph{rational $\ell$-adic $B$-field} for $\alpha$. 
\end{remark}

Having formulated the notion of $\ell$-adic topological triviality, we now turn to the $\ell$-adic analog of Lemma~\ref{lemma-cohomology-SB}. 

\begin{lemma}
\label{lemma-cohomology-SB-k} 
Let $X$ be a smooth proper variety over an algebraically closed field $k$, and let $\ell \neq \characteristic(k)$ be a prime.
Let $\alpha \in \Br(X)$ be an $\ell$-adic topologically trivial class. 
Let $\pi \colon P \to X$ be a Severi--Brauer variety of class $\alpha$ and relative dimension $r$. 
\begin{enumerate}
    \item \label{k-top-h} 
    There exists an element $h \in \rH_{\et}^2(P, \bZ_{\ell}(1))$, unique modulo classes pulled back from $X$, whose restriction $h_{\bar{x}}$ to any fiber $P_{\bar{x}} \cong \bP_{\kappa(\bar{x})}^r$ over a geometric point $\bar{x}$ of $X$ is the hyperplane class. 
    \item \label{integral-coh-P} 
    For any integer $d$, there is an isomorphism  
\begin{equation}
\label{equation-H-P-k}
    \sum_i \pi^*(-) \cup h^j \colon \bigoplus_{j = 0}^r \rH_{\et}^{d - 2j}(X, \bZ_{\ell}(-j))  \xrightarrow{\, \sim \,} \rH^d_{\et}(P, \bZ_{\ell}). 
\end{equation} 
    \item \label{h-B-algebraic}
    If $b \in \rH_{\et}^{2}(X, \bZ_{\ell}(1))$ is an integral $\ell$-adic $B$-field of degree $\ell^n$ for $\alpha$, 
    then there exists $h$ as in~\eqref{k-top-h} such that $\ell^nh + \pi^*b \in \rH^{2}_{\et}(P, \bZ_{\ell}(1))$ is algebraic. 
    
 \item \label{coh-P-ladic} For $b$ and $h$ as in~\eqref{h-B-algebraic} and $B = b/\ell^n \in \rH_{\et}^2(X, \bQ_{\ell}(1))$ the corresponding rational $\ell$-adic $B$-field,  
 there is an isomorphism 
\begin{equation}
\label{equation-coh-P}
\sum_j \pi^*(-) \cup (h+\pi^*B)^{j} \colon 
\bigoplus_{j=0}^r \rH^{d-2j}_{\et}(X, \bQ_{\ell}(-j)) \xrightarrow{\, \sim \,} \rH_{\et}^d(P, \bQ_{\ell}) . 
\end{equation}
\end{enumerate}
\end{lemma}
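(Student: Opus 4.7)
The plan is to establish the four claims in order, reducing each to a standard fact about the Leray spectral sequence of $\pi \colon P \to X$ combined with the concrete construction of an algebraic line bundle from the integral $\ell$-adic $B$-field.

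For (1), I would analyze the Leray spectral sequence with coefficients in $\bZ_\ell(1)$. Since $\pi$ is smooth and proper of relative dimension $r$ and étale-locally a projective bundle, the higher direct images are canonically $\rR^{2q}\pi_*\bZ_\ell(1) \cong \bZ_\ell(1-q)$ for $0 \leq q \leq r$ and vanish in odd degrees. The class $h$ exists precisely when the canonical generator $1 \in \rH^0_{\et}(X, \rR^2\pi_*\bZ_\ell(1)) = \bZ_\ell$ lifts to $\rH^2_{\et}(P, \bZ_\ell(1))$. The only potentially nonzero differential on this class is the transgression $d_3 \colon E_3^{0,2} \to E_3^{3,0} = \rH^3_{\et}(X, \bZ_\ell(1))$, and $d_3(1)$ is identified, via the classical comparison of Severi--Brauer varieties with $\PGL$-torsors and the Kummer diagram from Remark~\ref{remark-delta}, with the class $\delta(\alpha) \in \rH^3_{\et}(X, \bZ_\ell(1))_{\tors}$. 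Since $\alpha$ is $\ell$-adic topologically trivial, $\delta(\alpha) = 0$ and the lift $h$ exists. Uniqueness modulo $\pi^* \rH^2_{\et}(X, \bZ_\ell(1))$ is immediate from the five-term exact sequence, using that $\pi_*\bZ_\ell(1) = \bZ_\ell(1)$.

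For (2), once $h$ is available, the map $\sum_j \pi^*(-) \cup h^j$ is a morphism of $\rH^*_{\et}(X, \bZ_\ell)$-modules whose restriction to each geometric fiber $P_{\bar{x}} \cong \bP^r_{\kappa(\bar{x})}$ is the classical projective bundle isomorphism. Being an isomorphism is étale-local on $X$, so I would trivialize $P$ étale-locally and invoke the usual projective bundle formula, observing that in a trivialization $h$ pulls back to $c_1(\cO(1))$ modulo $\pi^*$ and hence provides the correct multiplicative splitting. Parts (3) and (4) rest on the classical fact that $\pi^*\alpha = 0 \in \Br(P)$, since the function field of $P$ is a splitting field for $\alpha$. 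Given the integral $\ell$-adic $B$-field $b$ of degree $\ell^n$, the reduction $\bar b \in \rH^2_{\et}(X, \bmu_{\ell^n})$ lifts $\alpha$ via Kummer and is realized by a $\bmu_{\ell^n}$-gerbe $\cX \to X$ whose associated $\bG_m$-gerbe has class $\alpha$. On the pullback gerbe $\cX_P \to P$, the inertia acts on $\cO_P^{\alpha}(1)$ through the standard character of $\bmu_{\ell^n}$, so the $\ell^n$-th tensor power has trivial inertia action and descends to an honest algebraic line bundle $N$ on $P$. A diagram chase in the Kummer sequences for $X$ and $P$ shows that the class of $N$ modulo $\ell^n$ in $\rH^2_{\et}(P, \bmu_{\ell^n})$ is exactly $\pi^* \bar b$, while its restriction to a fiber is $\ell^n$ times the hyperplane class. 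Applying (2) to $c_1(N)$ gives $c_1(N) = \ell^n h' + \pi^* a_0$ for some class $h'$ as in (1) and some $a_0 \in \rH^2_{\et}(X, \bZ_\ell(1))$ with $a_0 \equiv b \pmod{\ell^n}$; replacing $h'$ by $h = h' + \pi^*\bigl((a_0 - b)/\ell^n\bigr)$, which is integral, yields a class as in (1) with $\ell^n h + \pi^* b = c_1(N)$ algebraic, proving (3). For (4), tensoring the integral isomorphism in (2) with $\bQ_\ell$ and substituting $h = (h + \pi^*B) - \pi^*B$ yields the rational projective bundle formula in terms of $h + \pi^*B$ by a purely formal change of basis over $\rH^*_{\et}(X, \bQ_\ell)$.

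The main obstacle I expect is the linkage between the Brauer class $\alpha$ and the cohomology of $P$ used in both (1) and (3): identifying $d_3(1)$ with $\delta(\alpha)$ in (1), and verifying that $c_1(N)$ reduces modulo $\ell^n$ to $\pi^* \bar b$ in (3). Both are folklore but require careful tracking of normalizations and sign conventions across the Kummer diagrams; modulo these identifications, the remaining arguments are essentially formal consequences of the Leray spectral sequence and linear change of basis.
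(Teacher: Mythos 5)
Your proof follows essentially the same route as the paper's: Leray spectral sequence for (1), Leray--Hirsch/étale-local trivialization for (2), the $\ell^n$-th power of the twisted line bundle $\cO_P^{\alpha}(1)$ for (3), and a formal change of basis for (4). The two computational identifications you flag as ``folklore but requiring care'' --- that $d_3(\bar h) = \delta(\alpha)$, and that $c_1(N) \equiv \pi^*\bar b \pmod{\ell^n}$ --- are exactly the two points the paper also treats lightly (the first is explicitly omitted, the second is left to the reader with a hint).

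One small but worthwhile difference: the paper's proof of (3) does \emph{not} invoke (2). It shows directly that $c_1(N) - \pi^*b$ is divisible by $\ell^n$ in $\rH^2_{\et}(P, \bZ_{\ell}(1))$, sets $h = (c_1(N) - \pi^*b)/\ell^n$, and observes that this $h$ restricts to the hyperplane class on fibers. This makes the existence of $h$ in (3) logically independent of (1) and (2), and the paper explicitly leverages this to supply an ``independent argument for the existence of $h$'' as a fallback for the omitted verification of $d_3(\bar h) = \delta(\alpha)$. Your version of (3) routes through (2), hence through (1), hence through that same omitted identification, so you lose the independence. This is not a gap in correctness (both hinge on the same underlying facts), but if you want to keep the paper's cleaner dependency structure you should replace the appeal to (2) with the direct divisibility observation; your change-of-variable $h = h' + \pi^*\bigl((a_0-b)/\ell^n\bigr)$ is then unnecessary.
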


\begin{proof}
Note that $\rR^0\pi_* \bZ_{\ell}= \bZ_{\ell}$, $\rR^1\pi_* \bZ_{\ell} = 0$, and $\rR^{2}\pi_* \bZ_{\ell} = \bZ_{\ell}$, so the Leray spectral sequence gives an exact sequence 
\begin{equation}
\label{leray-es} 
    0 \to \rH^2_{\et}(X, \bZ_{\ell}(1)) 
    \xrightarrow{ \pi^* } \rH^2_{\et}(P, \bZ_{\ell}(1)) 
    \to \rH^0_{\et}(X, \rR^2\pi_*\bZ_{\ell}(1)) \xrightarrow{ \, d_3 \, } \rH^3_{\et}(X, \bZ_{\ell}(1))
\end{equation}
where the second map is given by restriction and $d_3$ is the $E_3$-differential in the spectral sequence. 
The generator $\bar{h} \in \rH_{\et}^0(X, \rR^{2}\pi_* \bZ_{\ell}(1)) \cong \bZ_{\ell}(1)$ restricts over geometric fibers of $\pi$ to the hyperplane class, so for~\eqref{k-top-h} we need to show that $\bar{h}$ lifts to $\rH^2_{\et}(P, \bZ_{\ell}(1))$. 
But $d_3(\bar{h}) \in \rH^3_{\et}(X, \bZ_{\ell}(1))$ can be identified with $\delta(\alpha)$, where $\delta \colon \Br(X)[\ell^{\infty}] \to \rH_{\et}^{3}(X, \bZ_{\ell}(1))_{\tors}$ is the map constructed in Remark~\ref{remark-delta}, so the claim follows from the $\ell$-adic topological triviality of $\alpha$. 
(We omit the proof of this description of $d_{3}(\bar{h})$, but our proof of~\eqref{h-B-algebraic} below gives an independent argument for the existence of $h$.) 

The classes $1, h, h^2, \dots, h^{r}$ restrict on geometric fibers $P_{\bar{x}}$ to a basis of $\rH^*_{\et}(P_{\bar{x}}, \bZ_{\ell})$. 
This implies~\eqref{integral-coh-P} by Leray--Hirsch. 

Now we prove~\eqref{h-B-algebraic}. 
Consider the $\pi^*\alpha$-twisted line bundle $\cO_P^{\alpha}(1)$ on $P$ from Remark~\ref{remark-Oalpha1}. 
Then $\cO_P^{\alpha}(1)^{\otimes \ell^n}$ is an ordinary line bundle; let $c \in \rH^2_{\et}(P, \bZ_{\ell}(1))$ be its first Chern class. 
Denoting by $b_n$ the image of $b$ in $\rH^2_{\et}(X, \bmu_{\ell^n})$ and by $c_n$ the image of $c$ in $\rH^2_{\et}(P, \bmu_{\ell^n})$, one may check that $c_n = \pi^*b_n$ (for instance, use the description of $c_1 \colon \Pic(P) \to \rH^2_{\et}(P, \bmu_{\ell^n})$ as the boundary map in the Kummer sequence and a \v{C}ech cohomology computation). 
Therefore $c - \pi^*b \in \rH^2_{\et}(P, \bZ_{\ell}(1))$ is divisible by $\ell^n$, say equal to $\ell^n h$ for some $h \in \rH^{2}_{\et}(P, \bZ_{\ell}(1))$. 
As $c$ restricts to $\ell^n$ times the hyperplane class on fibers, it follows that $h$ restricts to the hyperplane class on fibers. 
Altogether, we have shown that $\ell^n h + \pi^*b = c$, 
with $h$ as in~\eqref{k-top-h} and $c$ the first Chern class of a line bundle, hence algebraic

Finally, \eqref{coh-P-ladic} follows formally from~\eqref{k-top-h}. 
\end{proof}

\begin{remark}[Tate classes]
\label{remark-coh-P}
Assume $k$ is the algebraic closure of a finitely generated field. 
For any model $X_0$ of $X$ defined over a field $k_0 \subset k$, we obtain a $\Gal(k/k_0)$-action on $\rH^{2i}_{\et}(X, \bQ_{\ell}(i))$. 
A \emph{Tate class} $c \in \rH^{2i}_{\et}(X, \bQ_{\ell}(i))$ is a cohomology class fixed by the $\Gal(k/k_0)$-action corresponding to some model $X_0$ defined over a finitely generated field $k_0$; similarly, an \emph{integral Tate class} is defined by replacing $\bQ_{\ell}$ with $\bZ_{\ell}$. 
The image of the cycle class map $\CH^i(X) \otimes \bQ_{\ell} \to \rH^{2i}_{\et}(X, \bQ_{\ell}(i))$ is contained in the subgroup $\rH^{2i}_{\et}(X, \bQ_{\ell}(i))^{\Tate}$ of Tate classes; similarly, 
the image of $\CH^i(X) \otimes \bZ_{\ell} \to \rH^{2i}_{\et}(X, \bZ_{\ell}(i))$ is contained in the subgroup 
$\rH^{2i}_{\et}(X, \bZ_{\ell}(i))^{\Tate}$ of integral Tate classes. 

In the situation of Lemma~\ref{lemma-cohomology-SB-k}, note that
if $k_0 \subset k$ is a subfield over which there exists a model $\pi_0 \colon P_0 \to X_0$ of $\pi \colon P \to X$ and a cycle in $\CH^1(X_0) \otimes \bQ_{\ell}$ of class $h + \pi^*B$, then~\eqref{equation-coh-P} is an isomorphism of $\Gal(k/k_0)$-modules. 
It follows that the isomorphism~\eqref{equation-coh-P} for $d = 2i$ induces an isomorphism 
\begin{equation*}
\bigoplus_{j=0}^r \rH^{2i-2j}_{\et}(X, \bQ_{\ell}(i-j))^{\Tate} \xrightarrow{\, \sim \,} \rH_{\et}^{2i}(P, \bQ_{\ell}(i))^{\Tate}  
\end{equation*}
on the subgroups of Tate classes. 
\end{remark}

\begin{remark}
\label{remark-crystalline}
When $\alpha \in \Br(X)[p^{\infty}]$ where $p = \characteristic(k)$, a different approach in terms of crystalline cohomology is needed to define notions of ``topological triviality'' or ``B-fields''. 
Crystalline B-fields are discussed in \cite[\S4]{derived-equivalences-bragg}, \cite[\S4.4]{twistor-bl}, and \cite[\S2]{bragg-yang} in the case when $X$ is a K3 surface, but the assumption that $X$ is a K3 surface is not essential for the basic theory.  
Using these ideas, it should be possible to formulate and prove $p$-adic versions of the results in this section. 
\end{remark}

\subsection{Divisibility obstructions}
\label{section-divisibility-obstructions-char-p}
Now we formulate the $\ell$-adic analog of Theorem~\ref{theorem-divisibility}. 
For $B \in \rH_{\et}^2(X, \bQ_{\ell}(1))$, 
we write $p_{i}^{B,e}$ for the polynomial in $\rH_{\et}^{2*}(X, \bQ_{\ell}(*))[x_1, \dots, x_i]$ defined by the same formula~\eqref{pBe} as over $\bC$, where 
$\rH_{\et}^{2*}(X, \bQ_{\ell}(*)) = \bigoplus_{i} \rH_{\et}^{2i}(X, \bQ_{\ell}(i))$. 
We say that a class in $\rH_{\et}^{2i}(X, \bQ_{\ell}(i))$ is \emph{integral} if it is in the image of $\rH_{\et}^{2i}(X, \bZ_{\ell}(i))$. 

\begin{theorem}
\label{theorem-lower-bound-general-k} 
Let $X$ be a smooth projective variety over the algebraic closure $k$ of a finitely generated field. 
Let $\ell \neq \characteristic(k)$ be a prime.
Let $\alpha \in \Br(X)$. 
Let 
$e$ be a positive integer and 
$\pi \colon P \to X$ a Severi--Brauer variety of class $\alpha$ and relative dimension at least $e$.  
\begin{enumerate} 
\item \label{divisible-1-k}
$\ind(\alpha)$ divides $e$ if and only if there exists a closed subvariety $Z \subset P$ whose generic fiber $Z_{\eta} \subset P_{\eta}$ is a twisted linear subvariety of codimension $e$. 
In particular, if $\ind(\alpha)$ divides $e$, there exists an integral Tate class $\gamma \in \rH_{\et}^{2e}(P, \bZ_{\ell}(e))$ whose restriction to fibers is the class of a codimension $e$ linear subspace. 
\item \label{divisible-2-k}
If $\alpha$ is $\ell$-adically topologically trivial and $B \in \rH^2_{\et}(X, \bQ_{\ell}(1))$ is a rational $\ell$-adic $B$-field for $\alpha$, then there exists a class $\gamma \in \rH^{2e}(P, \bZ_{\ell}(e))$ satisfying the above condition if and only if there exist Tate classes 
\begin{equation*} 
c_j \in \rH^{2j}(X, \bQ_{\ell}(j))^{\Tate}, 
\quad 1 \leq j \leq \min \set{ e, \dim(X) } , 
\end{equation*} 
such that $p_i^{B,e}(c_1, \dots, c_i) \in \rH^{2i}(X, \bQ_{\ell}(i))$ is integral for all $1 \leq i \leq \min \set{ e, \dim(X) }$. 
\end{enumerate} 
\end{theorem}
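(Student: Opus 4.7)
The plan is to replay the proof of Theorem~\ref{theorem-divisibility} in the $\ell$-adic setting, using Lemma~\ref{lemma-cohomology-SB-k} in place of Lemma~\ref{lemma-cohomology-SB}. The only substantive new ingredient is to track Galois actions so that integrality upgrades to the Tate property; this is accomplished by fixing a model $P_0 \to X_0$ of $\pi \colon P \to X$ over a finitely generated subfield $k_0 \subset k$ large enough that the algebraic line bundle $\cL = \cO_P^\alpha(1)^{\otimes \ell^n}$ from the proof of Lemma~\ref{lemma-cohomology-SB-k}(3), whose Chern class equals $\ell^n h + \pi^* b$, is defined over $k_0$. After this setup, both $h + \pi^*B = c_1(\cL)/\ell^n$ and any algebraic cycle class are Galois-invariant for $\Gal(k/k_0)$.

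Part~(1) follows verbatim from the proof of Theorem~\ref{theorem-divisibility}(1) via Lemmas~\ref{lemma-index-SB} and~\ref{lemma-twisted-linear} applied on the generic fiber together with taking closures. The resulting cycle class $\gamma = \cl(Z) \in \rH^{2e}_{\et}(P, \bZ_\ell(e))$ is integral by construction, and becomes Tate after spreading $Z$ out to a cycle on some model over a finitely generated subfield. Its restriction to any geometric fiber is the class of a codimension-$e$ linear subspace, by the very definition of a twisted linear subvariety.

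For part~(2), set $m = \min\{e, \dim X\}$ and $B = b/\ell^n$. Given Tate classes $c_j$ making each $p_i^{B,e}(c_1,\ldots,c_i)$ integral, define
\[
\gamma \;=\; \sum_{j=0}^{m} \pi^*(c_j)\,(h + \pi^*B)^{e-j} \;\in\; \rH^{2e}_{\et}(P, \bQ_\ell(e))
\]
with $c_0 := 1$. Expanding in powers of $h$ and comparing with~\eqref{equation-H-P-k} shows the coefficient of $h^{e-i}$ is $\pi^*(p_i^{B,e}(c_1,\ldots,c_i))$, giving $\gamma \in \rH^{2e}_{\et}(P, \bZ_\ell(e))$. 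Tateness of $\gamma$ follows from that of the $c_j$ and of $h + \pi^*B$, and fiberwise $\gamma$ restricts to $h^e$ as required. Conversely, given such a $\gamma$, the rational isomorphism~\eqref{equation-coh-P} yields a unique expansion $\gamma = \sum_{j=0}^{e} \pi^*(c_j)(h+\pi^*B)^{e-j}$ with $c_j \in \rH^{2j}_{\et}(X, \bQ_\ell(j))$; the fiberwise condition forces $c_0 = 1$, and $c_j = 0$ for $j > \dim X$ by cohomological dimension. Since our chosen model makes~\eqref{equation-coh-P} Galois-equivariant (cf.\ Remark~\ref{remark-coh-P}), each $c_j$ inherits the Tate property from $\gamma$; reverting the expansion in powers of $h$ yields the required integrality of each $p_i^{B,e}(c_1,\ldots,c_i)$.

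The only delicate point is the Galois-equivariance of~\eqref{equation-coh-P}, which reduces to simultaneously descending $P$ and $\cL$ to a single model over a finitely generated base; once this is arranged, the remainder of the argument is purely formal and parallels the complex case step by step.
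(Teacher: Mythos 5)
Your proof is correct and takes essentially the same route as the paper: the paper simply invokes ``the same argument as in Theorem~\ref{theorem-divisibility}, using Lemma~\ref{lemma-cohomology-SB-k} and Remark~\ref{remark-coh-P},'' and your write-up fills in precisely the details that reference leaves implicit --- comparing the integral expansion of $\gamma$ in powers of $h$ with the rational expansion in powers of $h + \pi^*B$, and using a common model over a finitely generated subfield (over which $\cO_P^\alpha(1)^{\otimes \ell^n}$ descends) so that~\eqref{equation-coh-P} is Galois-equivariant and Tateness passes between $\gamma$ and the $c_j$.
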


\begin{proof} 
The same argument as in our proof of Theorem~\ref{theorem-divisibility} in \S\ref{section-obstruction} works, using Lemma~\ref{lemma-cohomology-SB-k} and Remark~\ref{remark-coh-P} for~\eqref{divisible-2-k}. 
\end{proof} 

Using Theorem~\ref{theorem-lower-bound-general-k}, one can derive $\ell$-adic versions of the applications discussed in \S\ref{section-obstruction}--\S\ref{section-large-index}. 
Of these, we will only spell out the construction of Brauer classes with large index, parallel to Proposition~\ref{proposition-index-av}. 
For this, we will consider products of elliptic curves which are generic in the following sense, parallel to Lemma~\ref{lemma-generic-product}.

\begin{lemma}
\label{lemma-generic-product-Fp}
Let $p$ and $\ell$ be distinct primes. 
Let $X = E_1 \times \cdots \times E_g$ be a product of $g$ pairwise non-isogenous elliptic curves over $\overline{\bF}_p$. 
For each $1 \leq i \leq g$, let $x_i, y_i$ be a symplectic basis for $\rH_{\et}^1(E_i, \bZ_{\ell})$. Then: 
\begin{enumerate}
\item \label{T11-prod-Ei}
$\rH^{2}(X, \bZ_{\ell}(1))^{\Tate} = \bigoplus_{i=1}^g \bZ_{\ell}(x_i \cup y_i)$, where 
we denote the pullbacks of $x_i$ and $y_i$ to $X$ by the same letters.  
\item \label{Tate-prod-Ei}
The ring $\bigoplus_j \rH_{\et}^{2j}(X, \bQ_{\ell}(j))^{\Tate}$ is generated by $\rH_{\et}^{2}(X, \bQ_{\ell}(1))^{\Tate}$; 
explicitly,  
\begin{equation*}
\rH_{\et}^{2j}(X, \bQ_{\ell}(j))^{\Tate} = \bigwedge^j \rH_{\et}^{2}(X, \bQ_{\ell}(1))^{\Tate} = 
\bigoplus_{i_1 < \cdots < i_j} \bQ_{\ell}(x_{i_1} \cup y_{i_1} \cup \cdots \cup x_{i_j} \cup y_{i_j}). 
\end{equation*} 
\end{enumerate}
In particular, the $\ell$-adic integral Tate conjecture (see Conjecture~\ref{conjecture-ITC} below) holds in all degrees for~$X$. 
\end{lemma}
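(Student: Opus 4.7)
The plan is to follow the proof of Lemma~\ref{lemma-generic-product} with Tate-theoretic inputs replacing Hodge-theoretic ones, the key tool being Tate's isogeny theorem for elliptic curves over a finite field. First, since any variety over $\overline{\bF}_p$ is defined over a finite subfield, I would choose $q = p^n$ large enough that every $E_i$ admits a model $E_{i,0}$ over $\bF_q$, so $X$ admits the model $X_0 = \prod_i E_{i,0}$. After possibly enlarging $q$, $\rH^{2k}_{\et}(X, \bQ_\ell(k))^{\Tate}$ is computed as the $\Gal(\overline{\bF}_p / \bF_q)$-invariant subspace. Applying K\"{u}nneth yields a decomposition $\rH^*_{\et}(X, \bQ_\ell) \cong \bigotimes_i \rH^*_{\et}(E_i, \bQ_\ell)$ of Galois modules; set $V_i = \rH^1_{\et}(E_i, \bQ_\ell)$ and note that Weil/Poincar\'{e} duality on each $E_i$ gives $V_i(1) \cong V_i^{\vee}$.

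For~\eqref{T11-prod-Ei}, the K\"{u}nneth decomposition in degree $2$ reads
\[
\rH^2_{\et}(X, \bQ_\ell(1)) = \bigoplus_i \rH^2_{\et}(E_i, \bQ_\ell(1)) \oplus \bigoplus_{i < j} V_i \otimes V_j(1).
\]
The diagonal pieces $\rH^2_{\et}(E_i, \bQ_\ell(1)) \cong \bQ_\ell$ are Galois-trivial with generators $x_i \cup y_i$. The Galois invariants of $V_i \otimes V_j(1) \cong V_i \otimes V_j^{\vee}$ equal $\Hom_{\Gal}(V_j, V_i)$, which by Tate's isogeny theorem equals $\Hom(E_j, E_i) \otimes \bQ_\ell$; this vanishes by pairwise non-isogeny. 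For~\eqref{Tate-prod-Ei}, K\"{u}nneth splits $\rH^{2k}_{\et}(X, \bQ_\ell(k))$ into summands indexed by tuples $(s_1, \dots, s_g) \in \{0,1,2\}^g$ with $\sum s_i = 2k$; letting $A = \{i : s_i = 1\}$ and $B = \{i : s_i = 2\}$, the invariance of the summand forces $|A| = 2m$ even, and the summand identifies as a Galois module with $\bigl(\bigotimes_{i \in A} V_i\bigr)(m)$. The summands with $A = \emptyset$ contribute exactly the claimed classes $\prod_{i \in B} x_i \cup y_i$, so it remains to show $\bigl(\bigotimes_{i \in A} V_i(m)\bigr)^{\Tate} = 0$ for every nonempty $A$.

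This last vanishing is the main obstacle: it is the $\ell$-adic analog of the statement that the Hodge ring of pairwise non-isogenous elliptic curves carries no ``exceptional'' classes beyond those generated in degree two, and it does not follow from Tate's isogeny theorem applied to a single pair. I would approach it by combining Tate's theorem (applied to the abelian variety $\prod_{i \in A} E_i$) with the semisimplicity of the Galois representation on the cohomology of an abelian variety over a finite field (also due to Tate), reducing the computation of invariants in $\bigotimes_{i \in A} V_i(m)$ to an analysis of how the simple constituents of the individual $V_i$ can combine; pairwise non-isogeny, iterated through all partitions of $A$, should obstruct the trivial character from appearing after the Tate twist. A cleaner alternative, when every $E_i$ is ordinary, is to lift $X$ to a product of elliptic curves in characteristic zero via Serre--Tate theory and invoke Lemma~\ref{lemma-generic-product}(2) after smooth base change; the at most one supersingular factor forced by non-isogeny can then be handled by direct computation with its (rational) Frobenius eigenvalues.
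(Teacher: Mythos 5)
Your proof of part~\eqref{T11-prod-Ei} is sound: the K\"unneth decomposition, the identification $V_i(1) \cong V_i^{\vee}$ via the Weil pairing, and the appeal to Tate's isogeny theorem to kill the off-diagonal invariants $\Hom_{\Gal}(V_j, V_i) \cong \Hom(E_j, E_i) \otimes \bQ_\ell$ are all correct (one should also note that $\rH^*_{\et}(X, \bZ_\ell)$ is torsion-free and K\"unneth holds integrally for abelian varieties, so the rational computation pins down the integral lattice). Your reduction of part~\eqref{Tate-prod-Ei} to the vanishing $\bigl(\bigotimes_{i \in A} V_i(m)\bigr)^{\Tate} = 0$ for $\emptyset \neq A$, $|A| = 2m$, is also the right reduction.

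However, there is a genuine gap precisely where you flag it, and neither of your two sketched repairs closes it. The paper handles this step by simply citing Milne's computation of the Tate classes on a product of pairwise non-isogenous elliptic curves over a finite field (the cited example on page 158 of the Milne reference), which is exactly the assertion that the $\bQ_\ell$-algebra of Tate classes is generated in degree $2$. This is a nontrivial theorem, not a formal consequence of the degree-$2$ Tate isogeny theorem. Concretely, the issue is that you must rule out multiplicative relations $\prod_{i \in A} \gamma_i = \zeta\, q^{|A|/2}$ ($\gamma_i$ a Frobenius eigenvalue of $E_i/\bF_q$, $\zeta$ a root of unity) for $|A| \geq 4$. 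Pairwise non-isogeny only tells you $\gamma_i \gamma_j / q$ is never a root of unity for $i \neq j$; it does not by itself forbid, say, $(\gamma_1 \gamma_2 / q)(\gamma_3 \gamma_4 / q)$ from being one. Your first suggestion (semisimplicity plus an analysis of simple constituents) does not make this obstruction go away, since the relevant simple constituents are characters whose values are exactly these Weil numbers, and the question is one of multiplicative independence of algebraic numbers, not of representation-theoretic decomposition. Your second suggestion (Serre--Tate lifting plus Lemma~\ref{lemma-generic-product}) also has a gap: smooth/proper base change identifies $\ell$-adic cohomology of the special and generic fibers, but to transfer the conclusion you need to identify the Tate classes in characteristic $p$ with the Hodge classes of the canonical lift over $\bC$, which requires (some form of) the Mumford--Tate conjecture for CM abelian varieties — additional input that should be stated rather than elided. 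In short, you have correctly reduced the lemma to the multiplicative-independence statement for Weil numbers of pairwise non-isogenous elliptic curves, but that statement must either be proved or cited; the paper cites it.
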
 

\begin{proof}
\eqref{T11-prod-Ei} holds as the $E_i$ are pairwise non-isogenous, while~\eqref{Tate-prod-Ei} holds by~\cite[Example, page 158]{milne}.  
\end{proof} 

\begin{proposition}
\label{proposition-index-av-Fp}
Let $p$ and $\ell$ be distinct primes.  
Let $X = E_1 \times \cdots \times E_g$ be a product of $g \geq 2$ pairwise non-isogenous elliptic curves over $\overline{\bF}_p$. 
Let $1\leq t \leq g-1$ be an integer. 
Let $x_i, y_i$ be a symplectic basis for $\rH_{\et}^1(E_i, \bZ_{\ell})$, and let 
\begin{equation*}
    b = \sum_{i=1}^{t} x_i \cup y_{i+1} \in \rH_{\et}^2(X, \bZ_{\ell}(1)) . 
\end{equation*} 
For any $\ell$-th power $n$, let $\alpha = \varepsilon(b/n) \in \Br(X)[n]$ be the corresponding $\ell$-adic topologically trivial class. Then: 
\begin{enumerate}
\item \label{ind-divide-ng-1-Fp}
$\ind(\alpha)$ divides $n^{t}$. In fact, $\alpha$ is represented by an Azumaya algebra on $X$ of degree $n^{t}$. 
\item \label{ind-not-divide-ng-2-Fp}
If $n$ does not divide $(t-1)!$, then $\ind(\alpha) = n^{t}$. 
\end{enumerate}  
\end{proposition}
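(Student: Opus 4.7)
The proof will mirror that of Proposition~\ref{proposition-index-av} in the complex setting, substituting \'{e}tale cohomology for singular cohomology and Lemma~\ref{lemma-generic-product-Fp} for Lemma~\ref{lemma-generic-product}. Part~\eqref{ind-divide-ng-1-Fp} follows from the general observation that if $x, y \in \rH_{\et}^{1}(X, \bZ_{\ell})$ and $\beta = \varepsilon(x \cup y / n) \in \Br(X)[n]$, then $\beta$ is the image of $x_n \cup y_n \in \rH_{\et}^{2}(X, \bmu_n)$ under the Kummer surjection, where $x_n, y_n$ are the mod-$n$ reductions; hence $\beta$ is represented by a cyclic Azumaya algebra of degree $n$. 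Writing $\alpha$ as a sum of $t$ such classes presents $\alpha$ as a tensor product of $t$ cyclic Azumaya algebras of degree $n$, giving an Azumaya representative of degree $n^t$.

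For~\eqref{ind-not-divide-ng-2-Fp}, the plan is to apply Theorem~\ref{theorem-lower-bound-general-k}\eqref{divisible-2-k} with $B = b/n$, $e = n^{t-1}$, and $i = t$ (note $t \leq n^{t-1}$), using the rational $\ell$-adic $B$-field $B = b/n$ for $\alpha$. If $\ind(\alpha)$ divided $n^{t-1}$, then there would exist Tate classes $c_j \in \rH_{\et}^{2j}(X, \bQ_{\ell}(j))^{\Tate}$ for $1 \leq j \leq t$ such that
\begin{equation*}
p_{t}^{B, n^{t-1}}(c_1, \dots, c_{t}) = \binom{n^{t-1}}{t} \frac{b^{t}}{n^{t}} + \sum_{j=1}^{t} \binom{n^{t-1}-j}{t-j} \frac{b^{t-j}}{n^{t-j}} c_j
\end{equation*}
is integral in $\rH_{\et}^{2t}(X, \bQ_{\ell}(t))$. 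A direct computation identifies the leading term with $\frac{1}{n}(n^{t-1}-1)(n^{t-1}-2)\cdots(n^{t-1}-(t-1))\omega$, where $\omega = x_1 \cup y_2 \cup x_2 \cup y_3 \cup \cdots \cup x_t \cup y_{t+1}$. Since $n$ does not divide $(t-1)!$, this leading term is not integral.

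To conclude, the key remaining step, exactly as in the complex case, is to show that the $\omega$-coefficient of $p_t^{B, n^{t-1}}(c_1, \dots, c_t)$ cannot vanish, i.e.\ that no integral Tate class contribution from the $c_j$ terms can cancel the non-integral leading $\omega$-coefficient. For this, one observes that $b^{t-j}$ lies in the span of monomials $x_{k_1} \cup y_{k_1+1} \cup \cdots \cup x_{k_{t-j}} \cup y_{k_{t-j}+1}$ with $1 \leq k_1 < \cdots < k_{t-j} \leq t$, each of which omits some factor $x_k \cup y_{k+1}$ (since $t-j<t$). Combined with the explicit description of Tate classes in Lemma~\ref{lemma-generic-product-Fp}\eqref{Tate-prod-Ei}, one sees that multiplying any such monomial by an element of $\rH_{\et}^{2j}(X, \bQ_{\ell}(j))^{\Tate}$ produces a class whose $\omega$-coefficient vanishes. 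Thus the obstruction from Theorem~\ref{theorem-lower-bound-general-k}\eqref{divisible-2-k} is nontrivial, so $\ind(\alpha)$ does not divide $n^{t-1}$. Since $n$ is an $\ell$-th power, $\ind(\alpha)$ is itself an $\ell$-th power, and combined with part~\eqref{ind-divide-ng-1-Fp} this forces $\ind(\alpha) = n^t$.

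The main obstacle is purely bookkeeping, namely verifying that the monomial and Tate-class arguments in the complex proof translate without change once Lemma~\ref{lemma-generic-product-Fp} is in hand; the substantive $\ell$-adic inputs (existence of $h$ and the projective bundle formula for $P$) are encapsulated in Lemma~\ref{lemma-cohomology-SB-k} and already fed into Theorem~\ref{theorem-lower-bound-general-k}.
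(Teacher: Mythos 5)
Your proposal is correct and takes the same route as the paper: the paper's own proof of Proposition~\ref{proposition-index-av-Fp} literally states that it is identical to that of Proposition~\ref{proposition-index-av}, substituting Theorem~\ref{theorem-lower-bound-general-k} and Lemma~\ref{lemma-generic-product-Fp} for Theorem~\ref{theorem-lower-bound} and Lemma~\ref{lemma-generic-product}, which is exactly what you do. Your unpacking of that substitution—the cyclic-algebra argument for part~\eqref{ind-divide-ng-1-Fp}, the computation of the leading term of $p_t^{B,n^{t-1}}$, and the observation that monomials coming from $b^{t-j}\rH^{2j}_{\et}(X,\bQ_\ell(j))^{\Tate}$ contribute no $\omega$-term—faithfully reproduces the complex-case argument with the appropriate $\ell$-adic replacements.
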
 

\begin{proof}
The proof is identical to that of Proposition~\ref{proposition-index-av}, using Theorem~\ref{theorem-lower-bound-general-k} and Lemma~\ref{lemma-generic-product-Fp} in place of Theorem~\ref{theorem-lower-bound} and Lemma~\ref{lemma-generic-product}. 
\end{proof} 

\subsection{Counterexamples to the integral Tate conjecture}  
Finally, parallel to \S\ref{section-IHC}, we discuss the application of the above results to the integral Tate conjecture, and in particular the proof of Theorem~\ref{theorem-ITC} from the introduction. 
Let us recall the statement of the conjecture. 

\begin{conjecture}[$\ell$-adic integral Tate conjecture in codimension $c$] 
\label{conjecture-ITC}
Let $X$ be a smooth projective variety over the algebraic closure $k$ of a finitely generated field. 
Let $\ell \neq \characteristic(k)$ be a prime. 
Then the cycle class map $\CH^c(X) \otimes \bZ_{\ell} \to \rH^{2c}(X, \bZ_{\ell}(c))^{\Tate}$ is surjective. 
\end{conjecture}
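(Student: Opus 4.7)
The plan is to attack the conjecture in two stages: first establish the surjectivity rationally, and then bootstrap to integral coefficients.

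For the rational stage, the goal is to show that $\CH^c(X) \otimes \bQ_{\ell} \to \rH^{2c}_{\et}(X, \bQ_{\ell}(c))^{\Tate}$ is surjective. Here I would try to reduce to known cases by motivic methods. Granting the Lefschetz standard conjecture in degree $2c$ (Conjecture~\ref{conjecture-l-lefschetz}), one can use algebraic correspondences to decompose $X$ into motivic summands and attempt to realize the codimension-$c$ piece of its cohomology inside motives of abelian varieties, where Tate's conjecture is known via Tate, Zarhin, and Faltings. An alternative approach is to fibre $X$ over a positive-dimensional base $B$, analyze the Leray spectral sequence to isolate Tate classes fibrewise and on $B$, and induct on $\dim(X)$; the base case $\dim(X)=1$ is trivial, and codimension one for any $X$ is the $\ell$-adic Lefschetz $(1,1)$-theorem combined with Tate's conjecture for divisors.

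Second, I would bootstrap from rational to integral. Given an integral Tate class $\gamma \in \rH^{2c}_{\et}(X, \bZ_{\ell}(c))^{\Tate}$, the rational stage yields $Z \in \CH^c(X) \otimes \bQ_{\ell}$ whose cycle class equals $\gamma$ after tensoring with $\bQ_{\ell}$. A natural tool is algebraic $K$-theory: try to realize $\gamma$ as the $c$-th Chern class of a perfect complex $F$ on $X$, possibly twisted by a Brauer class, and then use a splitting-principle argument to express the Chern class as an integral combination of classes of subvarieties. Inductively, this reduces to handling lower-codimension correction terms produced by $\ch_j(F)$ for $j<c$, each of which should be integral by the inductive hypothesis.

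The main obstacle is the integrality upgrade. Clearing denominators in the rational cycle $Z$ produces an integral cycle only after multiplication by a positive integer, and that integer is precisely what is governed by the discrepancy between the honest index $\ind(\alpha)$ and the Hodge/Tate-theoretic indices $\ind_{\Hdg}^P(\alpha)$ on Severi--Brauer fibrations $P\to X$. Theorem~\ref{theorem-ITC} shows that this discrepancy is genuinely nonzero in general: on the Severi--Brauer varieties constructed there, integral Tate classes exist whose algebraicity would contradict the lower bound on $\ind(\alpha)$ extracted from Theorem~\ref{theorem-lower-bound-general-k}. Consequently, the $K$-theoretic strategy of the previous paragraph cannot succeed for those $P$, and any proof of the conjecture as stated must either exclude such Severi--Brauer targets by additional hypotheses or produce by entirely different means a witnessing cycle whose existence our Brauer-theoretic obstruction theory forbids.
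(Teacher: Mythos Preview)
There is a fundamental misconception here: the statement you are attempting to prove is Conjecture~\ref{conjecture-ITC}, which the paper does \emph{not} prove. Quite the opposite---the paper explicitly remarks that ``like the integral Hodge conjecture, Conjecture~\ref{conjecture-ITC} is well-known to be false in general,'' and Theorem~\ref{theorem-ITC} (with Theorem~\ref{theorem-ITC-precise}) constructs new counterexamples. There is therefore no ``paper's own proof'' to compare your proposal against.

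Your final paragraph in fact recognizes this: you correctly observe that the Severi--Brauer varieties of Theorem~\ref{theorem-ITC} carry integral Tate classes that cannot be algebraic, and you conclude that any proof ``must either exclude such Severi--Brauer targets by additional hypotheses or produce \dots\ a witnessing cycle whose existence our Brauer-theoretic obstruction theory forbids.'' The second option is impossible---the obstruction is unconditional---so the conjecture as stated is simply false, and your proposal is not a proof outline but rather a sketch of why no proof can exist. The earlier stages of your plan (the rational Tate conjecture via motives, and the $K$-theoretic bootstrap) are reasonable heuristics for why one might have hoped the conjecture holds, but they are not steps toward a proof; the rational stage is itself a famously open problem, and the integral bootstrap demonstrably fails on the examples the paper constructs.
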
 

Like the integral Hodge conjecture, Conjecture~\ref{conjecture-ITC} is well-known to be false in general. 
To obtain the new counterexamples promised in Theorem~\ref{theorem-ITC}, 
we can mimic Theorem~\ref{theorem-IHC} over $\overline{\bF}_p$ using the examples from Proposition~\ref{proposition-index-av-Fp}. 
Let us explain this with attention to the details that are different over $\overline{\bF}_{p}$. 
First, there is an obvious $\ell$-adic analog of Proposition~\ref{proposition-0-cycle}, similar to how Theorem~\ref{theorem-lower-bound-general-k} is an analog of Theorem~\ref{theorem-divisibility}; for brevity we do not formulate this explicitly, 
but using it 
we obtain exactly as in Lemma~\ref{lemma-weaker-bound-AV} the following result. 

\begin{lemma}
\label{lemma-weaker-bound-AV-Fp}
Let $X$ be an abelian variety of dimension $g$ over the algebraic closure $k$ of a finitely generated field. 
Let $\ell \neq \characteristic(k)$ be a prime. 
Let $\alpha \in \Br(X)[n]$ be an $n$-torsion Brauer class where $n$ is an $\ell$-th power, 
and let $P \to X$ be a Severi--Brauer variety of class $\alpha$ and relative dimension $r$. 
If $n$ divides $(g-1)!$, then there exists an integral Tate class $\delta \in \rH_{\et}^{2r}(P, \bZ_{\ell}(r))^{\Tate}$ whose 
restriction to fibers of $P \to X$ has degree $n^{g-2}$. 
More precisely, 
if $b \in \rH^{2}_{\et}(X, \bZ_{\ell}(1))$ is an $\ell$-adic integral $B$-field of degree $n$ and $h$ is as in Lemma~\ref{lemma-cohomology-SB-k}\eqref{h-B-algebraic}, then 
\begin{equation}
\label{delta-av-p}
\delta = 
\begin{cases}
\sum_{i=0}^{g-1} \pi^* \left( n^{g-2-i}r(r-1)\cdots(r-(i-1)) \frac{b^i}{i!} \right) h^{r-i} & \text{if }  g \leq r \\
\sum_{i=0}^{r} \pi^* \left( n^{g-2-i}r(r-1)\cdots(r-(i-1)) \frac{b^i}{i!} \right) h^{r-i} & \text{if }  r < g. 
\end{cases} 
\end{equation} 
is such a class. 
\end{lemma}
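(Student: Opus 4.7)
The plan is to mimic the proof of Lemma~\ref{lemma-weaker-bound-AV} verbatim, replacing singular cohomology and Hodge classes by $\ell$-adic cohomology and Tate classes. I would first observe that $\alpha$ is automatically $\ell$-adically topologically trivial: on any abelian variety $\rH^*_{\et}(X, \bZ_\ell) \cong \bigwedge^* \rH^1_{\et}(X, \bZ_\ell)$, so $\rH^3_{\et}(X, \bZ_\ell)$ is torsion-free and the map $\delta$ of Remark~\ref{remark-delta} vanishes on $\Br(X)[\ell^\infty]$. Consequently $\alpha = \varepsilon(B)$ with $B = b/n \in \rH^2_{\et}(X, \bQ_\ell(1))$ a rational $\ell$-adic $B$-field.

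Set $e = n^{g-2}$. By the $\ell$-adic analog of Proposition~\ref{proposition-0-cycle}\eqref{0-cycle-top-triv}, alluded to in the discussion preceding the lemma, the existence of an integral Tate class $\delta \in \rH^{2r}_{\et}(P, \bZ_\ell(r))^{\Tate}$ of fibral degree $n^{g-2}$ is equivalent to the existence of Tate classes $c_j \in \rH^{2j}_{\et}(X, \bQ_\ell(j))^{\Tate}$ for $1 \leq j \leq \min\{r,g\}$ such that $q_i^{B,r,e}(c_1,\dots,c_i)$ is integral for every $1 \leq i \leq \min\{r,g\}$; the explicit expression for $\delta$ in terms of the $c_j$ and $h$ then matches \eqref{delta-av-p}. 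I would take $c_j = 0$ for $j < \min\{r,g\}$ and, when $g \leq r$, choose $c_g \in \rH^{2g}_{\et}(X, \bQ_\ell(g)) \cong \bQ_\ell$ (automatically Tate) so as to cancel the leading term of $q_g^{B,r,e}$. This reduces the problem to verifying integrality of the class
\[
   n^{g-2-i}\, r(r-1)\cdots(r-(i-1))\,\frac{b^i}{i!} \in \rH^{2i}_{\et}(X, \bQ_\ell(i))
\]
for all $1 \leq i \leq \min\{g-1,r\}$.

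Integrality rests on two ingredients. First, $b^i/i!$ is integral on any abelian variety: using a symplectic basis to expand $b = \sum_k n_k\, x_k \cup y_k$ in $\bigwedge^2 \rH^1_{\et}(X, \bZ_\ell)$, one computes $b^i/i! = \sum_{|I|=i}\bigl(\prod_{k\in I} n_k\bigr)\, x_I \cup y_I$, which lies in $\bigwedge^{2i} \rH^1_{\et}(X, \bZ_\ell) \subset \rH^{2i}_{\et}(X, \bZ_\ell(i))$. Second, for $i \leq g-2$ the prefactor $n^{g-2-i}\binom{r}{i}$ is obviously integral, while for $i = g-1$ the congruence $r \equiv -1 \pmod n$ --- which follows from Lemma~\ref{lemma-index-SB}, since $\per(\alpha) = n$ divides $\ind(\alpha)$ and $\ind(\alpha) \mid r+1$ --- gives
\[
   r(r-1)\cdots(r-(g-2)) \equiv (-1)^{g-1}(g-1)! \pmod n,
\]
which is divisible by $n$ by the hypothesis $n \mid (g-1)!$; hence the prefactor $n^{-1}\, r(r-1)\cdots(r-(g-2))$ is also integral. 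This $i = g-1$ case is the only place the hypothesis $n \mid (g-1)!$ enters, and is the main arithmetic step of the argument.

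It remains to check that the resulting class $\delta$ is Tate. Lemma~\ref{lemma-cohomology-SB-k}\eqref{h-B-algebraic} provides a model of $P \to X$ over a finitely generated subfield $k_0 \subset k$ on which $nh + \pi^*b$ is algebraic, so $h + \pi^*B = (nh + \pi^*b)/n$ is a $\bQ_\ell$-cycle class and in particular $\Gal(k/k_0)$-invariant. By Remark~\ref{remark-coh-P} the decomposition \eqref{equation-coh-P} is then Galois-equivariant and identifies Tate classes on $P$ with tuples of Tate classes on $X$; since our chosen $c_j$ are all Tate, so is $\delta$. The main technical content beyond the arithmetic step above is the formulation of the $\ell$-adic analog of Proposition~\ref{proposition-0-cycle}\eqref{0-cycle-top-triv}, but given Lemma~\ref{lemma-cohomology-SB-k} and Remark~\ref{remark-coh-P} this is a routine translation of the complex proof.
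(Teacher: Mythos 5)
Your proof is correct and follows the same route the paper takes: note that $\alpha$ is automatically $\ell$-adically topologically trivial since $\rH^3_{\et}(X,\bZ_\ell(1))$ is torsion-free on an abelian variety, invoke the $\ell$-adic analog of Proposition~\ref{proposition-0-cycle}\eqref{0-cycle-top-triv}, take $c_j = 0$ for $j < \min\{r,g\}$, observe that $b^i/i!$ is integral on an abelian variety and that the prefactor $n^{g-2-i}\binom{r}{i}$ is integral for $i \leq g-1$ via $r \equiv -1 \pmod{n}$ and $n \mid (g-1)!$, and kill the top term with $c_g$ when $g \leq r$. The Galois-equivariance check via Remark~\ref{remark-coh-P} is also how the paper handles the Tate condition, so your argument matches the paper's essentially verbatim.
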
 

Note that for $\delta \in \rH_{\et}^{2r}(P, \bZ_{\ell}(r))^{\Tate}$ an arbitrary integral Tate class, the degree of the restriction of $\delta$ to fibers $P_{\bar{x}} \cong \bP_{\kappa(\bar{x})}^r$ is an element of $\bZ_{\ell}$ which in general may not lie in $\bZ$. 
We define a Tate-theoretic index, parallel to Definition~\ref{definition-ind-HP}, by considering the positive integer degrees that arise in this way. 

\begin{definition}
Let $X$ is a smooth projective variety over the algebraic closure $k$ of a finitely generated field. 
Let $\alpha \in \Br(X)$ and let $P \to X$ be a Severi--Brauer variety of class $\alpha$ and relative dimension $r$. 
For $\ell \neq \characteristic(k)$ a prime, the 
\emph{$\ell$-adic Tate-theoretic index of $\alpha$ with respect to $P$} is 
\begin{equation*}
\ind_{\ell{\text-}\Tate}^P(\alpha) \coloneqq \min  \set{ e \in \bZ_{> 0} \st \exists ~ \delta \in \rH_{\et}^{2r}(P, \bZ_{\ell}(r))^{\Tate} \text{ of degree $e$ on fibers of $P \to X$} }. 
\end{equation*} 
\end{definition} 

Then $\ind_{\ell{\text-}\Tate}^P(\alpha)$ divides $\ind(\alpha)$ by the $\ell$-adic analog of Proposition~\ref{proposition-0-cycle}. 
Moreover, when $\alpha$ is $\ell$-power torsion, 
$\ind_{\ell{\text-}\Tate}^P(\alpha)$ is closely related to the integral Tate conjecture. 
To see this, we will use the following 
$\ell$-adic variant of Lemma~\ref{lemma-index-SB}\eqref{index-0-cycle}. 

\begin{lemma}
\label{lemma-ind-0-cycle-ell}
Let $K$ be a field and $\alpha \in \Br(K)[\ell^{\infty}]$ an $\ell$-power torsion Brauer class for a prime $\ell$. 
Let $P$ be a Severi--Brauer variety of class $\alpha$. 
Then 
\begin{equation*}
\ind(\alpha) = \min \set{ e \in \bZ_{>0} \st \exists ~ Z \in \CH_0(P) \otimes \bZ_{\ell} \text{ such that } \deg(Z) = e } .  
\end{equation*} 
\end{lemma}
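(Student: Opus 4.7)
The plan is to directly reduce to the classical Lemma~\ref{lemma-index-SB}\eqref{index-0-cycle}. First I would identify the image of the degree map $\deg \colon \CH_0(P) \to \bZ$. By Lemma~\ref{lemma-index-SB} (the $\gcd$ variant of~\eqref{index-0-cycle}), the greatest common divisor of the degrees of all positive degree $0$-cycles on $P$ equals $\ind(\alpha)$; combined with the fact that the set of degrees is closed under addition (of effective combinations) and negation, this shows that the image of $\deg$ is precisely the subgroup $\ind(\alpha) \bZ \subset \bZ$.

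Next I would pass to $\bZ_\ell$-coefficients. Since $\bZ_\ell$ is flat over $\bZ$, tensoring the short exact sequence $0 \to \ker(\deg) \to \CH_0(P) \to \ind(\alpha) \bZ \to 0$ with $\bZ_\ell$ shows that the image of the induced map $\deg \colon \CH_0(P) \otimes \bZ_\ell \to \bZ_\ell$ is exactly the ideal $\ind(\alpha) \bZ_\ell \subset \bZ_\ell$.

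The lemma then reduces to computing the minimum positive integer contained in $\ind(\alpha) \bZ_\ell$. Here I would use the hypothesis that $\alpha \in \Br(K)[\ell^{\infty}]$: since $\per(\alpha)$ is a power of $\ell$ and $\per(\alpha)$ and $\ind(\alpha)$ share the same prime factors, $\ind(\alpha)$ is itself a power of $\ell$. Hence for $e \in \bZ_{>0}$, the condition $e \in \ind(\alpha)\bZ_\ell$ is equivalent to $v_{\ell}(e) \geq v_{\ell}(\ind(\alpha))$, which in turn is equivalent to $\ind(\alpha) \mid e$ in $\bZ$. The smallest such positive integer is $\ind(\alpha)$, which is realized by any $0$-cycle computing the index. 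There is no serious obstacle; the only point to be careful about is the $\ell$-power reduction, which is exactly what the $\ell$-primary hypothesis is for (without it, the minimum positive integer in $\ind(\alpha)\bZ_\ell$ could be the prime-to-$\ell$ part of $\ind(\alpha)$, not $\ind(\alpha)$ itself).
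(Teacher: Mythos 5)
Your proof is correct, and the underlying ideas coincide with the paper's: in both cases the content is Lemma~\ref{lemma-index-SB}\eqref{index-0-cycle} together with the observation that $\ind(\alpha)$ is a power of $\ell$. The bookkeeping is organized a bit differently. You first pin down $\mathrm{im}\bigl(\deg\colon \CH_0(P)\to\bZ\bigr)=\ind(\alpha)\bZ$ (the ``min'' form of Lemma~\ref{lemma-index-SB}\eqref{index-0-cycle} gives this immediately, since the smallest positive element of a nonzero subgroup $n\bZ$ is its generator), then use flatness of $\bZ\to\bZ_\ell$ to conclude that $\mathrm{im}\bigl(\deg\otimes\bZ_\ell\bigr)=\ind(\alpha)\bZ_\ell$, and finish with the $\ell$-adic valuation argument. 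The paper instead starts from a particular $Z=\sum a_iZ_i$ with $\deg(Z)=e$, passes through $\bZ_{(\ell)}$ via the identity $I=(I\bZ_\ell)\cap\bZ_{(\ell)}$ (faithful flatness of $\bZ_{(\ell)}\to\bZ_\ell$), and clears denominators to produce an honest $0$-cycle of degree $ed$ with $d$ prime to $\ell$. Your version is slightly more structural and avoids the detour through $\bZ_{(\ell)}$; the paper's is slightly more hands-on. Both are valid and of comparable length, and both isolate the $\ell$-primary hypothesis as exactly what kills the prime-to-$\ell$ ambiguity introduced by tensoring with $\bZ_\ell$.
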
 

\begin{proof}
By the characterization of $\ind(\alpha)$ in terms of $0$-cycles in Lemma~\ref{lemma-index-SB}\eqref{index-0-cycle}, it suffices to show that if $e \in \bZ_{>0}$ is realized as $\deg(Z) = e$ for
$Z \in \CH_0(P) \otimes \bZ_{\ell}$, then $\ind(\alpha)$ divides~$e$. 
Choose $0$-cycles $Z_i \in \CH_0(P)$ and $a_i \in \bZ_{\ell}$ such that $\sum a_i \deg(Z_i) = e$. 
We may assume that the $a_i$ are contained in $\bZ_{(\ell)}$. 
Indeed, if $I \subset \bZ_{(\ell)}$ is the ideal generated by the $\deg(Z_i)$, then 
its extension $I \bZ_{\ell} \subset \bZ_{\ell}$ contains $e$, but $I = (I \bZ_{\ell}) \cap \bZ_{(\ell)}$ since $\bZ_{(\ell)} \to \bZ_{\ell}$ is faithfully flat. 
Clearing denominators of the $a_i$, we find a cycle $W \in \CH_0(P)$ such that $\deg(W) = e d$ where $d$ is prime to $\ell$. 
Then by Lemma~\ref{lemma-index-SB}\eqref{index-0-cycle}, $\ind(\alpha)$ divides $ed$. 
However, $\ind(\alpha)$ is a power of $\ell$ by our assumption $\alpha \in \Br(K)[\ell^{\infty}]$, so we conclude $\ind(\alpha)$ divides $e$.  
\end{proof} 

\begin{lemma} 
Let $X$ be a smooth projective variety over the algebraic closure $k$ of a finitely generated field. 
Let $\ell \neq \characteristic(k)$ be a prime. 
Let $\alpha \in \Br(X)[\ell^{\infty}]$ and let $P \to X$ be a Severi--Brauer variety of class $\alpha$ and relative dimension $r$. 
\begin{enumerate}
\item \label{0-cycle-1-ell}
$\ind(\alpha)$ divides an integer $e$ if and only if there exists a cycle $Z \in \CH^r(P) \otimes \bZ_{\ell}$ whose generic fiber $Z_{\eta} \in \CH_0(P_{\eta}) \otimes \bZ_{\ell}$ is a $0$-cycle of degree $e$. 
\item \label{ind-ITC}
If $\ind_{\ell{\text-}\Tate}^P(\alpha) < \ind(\alpha)$, then the $\ell$-adic integral Tate conjecture in codimension $r$ fails on $P$. 
More precisely, if $\delta \in \rH_{\et}^{2r}(P, \bZ_{\ell}(r))^{\Tate}$ is a class whose degree on fibers is an integer which is not divisible by $\ind(\alpha)$, then $\delta$ is not algebraic, i.e. not in the image of the cycle class map $\CH^r(P) \otimes \bZ_{\ell} \to \rH_{\et}^{2r}(P, \bZ_{\ell}(r))^{\Tate}$. 
\end{enumerate} 
\end{lemma}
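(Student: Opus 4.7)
The plan is to reduce both parts to Lemma~\ref{lemma-ind-0-cycle-ell} applied to the generic fiber $P_{\eta} \to \Spec(k(X))$, which is a Severi--Brauer variety of class $\alpha\vert_{k(X)}$.

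For part~\eqref{0-cycle-1-ell}, I argue in both directions. For the ``only if'' direction, suppose $\ind(\alpha) \mid e$. Applying Lemma~\ref{lemma-ind-0-cycle-ell} at the generic point, there exists a $0$-cycle $Z_{\eta} \in \CH_0(P_{\eta}) \otimes \bZ_{\ell}$ with $\deg(Z_{\eta}) = e$. Writing $Z_{\eta}$ as a $\bZ_{\ell}$-linear combination of classes of closed points of $P_{\eta}$ and taking Zariski closures in $P$ (each closed point of $P_{\eta}$ extends to an integral closed subscheme of $P$ of codimension $r$), one obtains a cycle $Z \in \CH^r(P) \otimes \bZ_{\ell}$ whose restriction to the generic fiber is $Z_{\eta}$. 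Conversely, given such a cycle $Z$, restriction to the generic fiber produces a $0$-cycle $Z_{\eta} \in \CH_0(P_{\eta}) \otimes \bZ_{\ell}$ of degree $e$, so Lemma~\ref{lemma-ind-0-cycle-ell} gives $\ind(\alpha) \mid e$.

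For part~\eqref{ind-ITC}, I argue by contradiction. Suppose $\delta \in \rH_{\et}^{2r}(P, \bZ_{\ell}(r))^{\Tate}$ has integer degree $e$ on fibers and is algebraic, i.e. $\delta = \mathrm{cl}(Z)$ for some $Z \in \CH^r(P) \otimes \bZ_{\ell}$. The key point is that the fibral degree of $\delta$ computed cohomologically agrees with $\deg(Z_{\eta})$: indeed, for any geometric point $\bar{x}$ of $X$, the restriction $Z_{\bar{x}} \in \CH_0(P_{\bar{x}}) \otimes \bZ_{\ell}$ has degree equal to the degree of $\delta\vert_{P_{\bar{x}}} \in \rH^{2r}_{\et}(P_{\bar{x}}, \bZ_{\ell}(r))$ (which is canonically $\bZ_{\ell}$ since $P_{\bar{x}} \cong \bP^r$), and taking $\bar{x}$ to be the geometric generic point shows $\deg(Z_{\eta}) = e$. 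By part~\eqref{0-cycle-1-ell}, this forces $\ind(\alpha) \mid e$, contradicting our hypothesis.

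The main subtle point is the identification, in the last paragraph, of the fibral degree of $\delta$ with $\deg(Z_{\eta})$, but this is a standard compatibility of proper pushforward with cycle class maps and specialization along a smooth base, so no serious obstacle arises. Once that identification is granted, both statements follow mechanically from Lemma~\ref{lemma-ind-0-cycle-ell}.
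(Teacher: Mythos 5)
Your proof is correct and follows essentially the same route as the paper: part~\eqref{0-cycle-1-ell} by applying Lemma~\ref{lemma-ind-0-cycle-ell} at the generic fiber and taking Zariski closures, and part~\eqref{ind-ITC} as an immediate consequence via the compatibility of the cycle class map with restriction to the generic fiber. Your discussion of the fibral-degree identification is more explicit than the paper, which simply asserts that~\eqref{ind-ITC} follows from~\eqref{0-cycle-1-ell}, but there is no substantive difference in approach.
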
 

\begin{proof}
\eqref{0-cycle-1-ell} follows from Lemma~\ref{lemma-ind-0-cycle-ell} by taking closures of $0$-cycles on the generic fiber, while~\eqref{ind-ITC} follows immediately from~\eqref{0-cycle-1-ell}. 
\end{proof} 

We obtain the following analogs of 
Corollary~\ref{corollary-ihc-fail-av} and 
Theorem~\ref{theorem-IHC} over $\overline{\bF}_p$.

\begin{corollary}
\label{corollary-itc-fail-av}
Let $X$ be an abelian variety of dimension $g$ over the algebraic closure $k$ of a finitely generated field. 
Let $\ell \neq \characteristic(k)$ be a prime. 
Let $\alpha \in \Br(X)[n]$ be an $n$-torsion Brauer class such that $n$ is an $\ell$-th power which divides $(g-1)!$ and 
$\ind(\alpha)$ does not divide $n^{g-2}$. 
Then for any Severi--Brauer variety $P \to X$ of relative dimension $r$, 
the $\ell$-adic integral Tate conjecture in codimension $r$ fails on $P$. 
More precisely, the class $\delta \in \rH_{\et}^{2r}(P, \bZ_{\ell}(r))^{\Tate}$ from~\eqref{delta-av-p} is not algebraic. 
\end{corollary}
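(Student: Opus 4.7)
The plan is to derive this statement as a direct combination of Lemma~\ref{lemma-weaker-bound-AV-Fp} with the preceding lemma relating $\ind_{\ell{\text-}\Tate}^P(\alpha)$ to the failure of the integral Tate conjecture, in complete analogy with how Corollary~\ref{corollary-ihc-fail-av} was deduced from Lemma~\ref{lemma-weaker-bound-AV} in the complex setting.

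First, I would observe that the hypotheses match the input of Lemma~\ref{lemma-weaker-bound-AV-Fp}: $X$ is an abelian variety of dimension $g$, $\alpha \in \Br(X)[n]$ with $n$ an $\ell$-th power dividing $(g-1)!$, and $P \to X$ is a Severi--Brauer variety of class $\alpha$ and relative dimension $r$. Note that since $\overline{\bF}_p$ is the algebraic closure of a finitely generated field and $X$ is an abelian variety over it, the $\ell$-adic cohomology $\rH^3_{\et}(X,\bZ_\ell(1))$ is torsion-free, so every class in $\Br(X)[\ell^\infty]$ is $\ell$-adically topologically trivial and an integral $\ell$-adic $B$-field $b \in \rH^2_{\et}(X,\bZ_\ell(1))$ of degree $n$ for $\alpha$ exists. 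Lemma~\ref{lemma-weaker-bound-AV-Fp} then produces the explicit integral Tate class $\delta \in \rH^{2r}_{\et}(P,\bZ_\ell(r))^{\Tate}$ given by formula~\eqref{delta-av-p}, whose restriction to fibers of $\pi \colon P \to X$ has degree $n^{g-2}$.

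Next, I would invoke part~\eqref{ind-ITC} of the lemma immediately preceding the corollary: since $\alpha$ is $\ell$-power torsion and $\delta$ has fibral degree equal to the positive integer $n^{g-2}$, if $\delta$ were algebraic then $\ind(\alpha)$ would have to divide $n^{g-2}$. This contradicts the standing assumption that $\ind(\alpha)$ does not divide $n^{g-2}$. Hence $\delta$ lies in $\rH^{2r}_{\et}(P,\bZ_\ell(r))^{\Tate}$ but is not in the image of the cycle class map $\CH^r(P) \otimes \bZ_\ell \to \rH^{2r}_{\et}(P,\bZ_\ell(r))^{\Tate}$, which is exactly the assertion that the $\ell$-adic integral Tate conjecture in codimension $r$ fails on $P$.

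There is essentially no obstacle: all the substantive work has been done in Lemma~\ref{lemma-weaker-bound-AV-Fp} (constructing $\delta$ and proving its integrality and Tate property via the $\ell$-adic analog of Proposition~\ref{proposition-0-cycle}) and in the lemma characterizing algebraicity of fibral-degree classes via $\ind(\alpha)$. The only minor point worth checking explicitly is that the hypothesis ``$\alpha$ is $\ell$-power torsion'' needed for the Tate-index lemma is automatic from $n$ being an $\ell$-th power, and that the fibral degree $n^{g-2}$ is a genuine positive integer (not merely an element of $\bZ_\ell$) so that the integer-valued $\ind_{\ell{\text-}\Tate}^P(\alpha)$ indeed detects it.
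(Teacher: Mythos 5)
Your proof is correct and matches the route the paper intends: combine Lemma~\ref{lemma-weaker-bound-AV-Fp} (which produces the explicit integral Tate class $\delta$ of fibral degree $n^{g-2}$) with part~\eqref{ind-ITC} of the immediately preceding lemma (which says that algebraicity of a class with integer fibral degree would force $\ind(\alpha)$ to divide that degree), and invoke the hypothesis that $\ind(\alpha)$ does not divide $n^{g-2}$. One small slip: the corollary is stated over the algebraic closure $k$ of an arbitrary finitely generated field, not only over $\overline{\bF}_p$; the reasoning you give (torsion-freeness of $\rH^3_{\et}(X,\bZ_\ell(1))$ for an abelian variety, hence $\ell$-adic topological triviality of $\alpha$) applies equally in that generality, so this does not affect the proof.
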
 

\begin{theorem}
\label{theorem-ITC-precise} 
Let $p$ be a prime and  
$X = E_1 \times \cdots \times E_g$ a product of $g \geq 3$ pairwise non-isogenous elliptic curves over $\overline{\bF}_p$.  
Let $\ell \neq p$ be a prime, let $n$ be an $\ell$-th power such that $n$ divides $(g-1)!$ but not $(g-2)!$, 
and let $\alpha \in \Br(X)[n]$ be the class from Proposition~\ref{proposition-index-av-Fp} for $t = g-1$. 
Let $P \to X$ be a Severi--Brauer variety of class $\alpha$ and relative dimension $n^{g-1}-1$, which 
exists by Proposition~\ref{proposition-index-av-Fp}\eqref{ind-divide-ng-1-Fp}. 
Then the $\ell$-adic integral Tate conjecture in codimension $n^{g-1}-1$ fails on $P$. 
More precisely, the class $\delta \in \rH_{\et}^{2r}(P, \bZ_{\ell}(r))^{\Tate}$ with $r = n^{g-1}-1$ from~\eqref{delta-av-p} is not algebraic. 
\end{theorem}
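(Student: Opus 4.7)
The plan is to deduce this theorem by simply assembling two results proved earlier in the paper: Proposition~\ref{proposition-index-av-Fp} (which provides the input on the index of $\alpha$) and Corollary~\ref{corollary-itc-fail-av} (which upgrades a lower bound on the index into a failure of the integral Tate conjecture on a Severi--Brauer variety). The result is essentially the direct analog over $\overline{\bF}_p$ of Theorem~\ref{theorem-IHC}, and the proof should mirror it step by step.

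More precisely, first I would verify the index calculation. Since $n$ is assumed to be an $\ell$-th power dividing $(g-1)!$ but not $(g-2)!$, applying Proposition~\ref{proposition-index-av-Fp}\eqref{ind-not-divide-ng-2-Fp} with $t = g-1$ yields $\ind(\alpha) = n^{g-1}$. In particular, since $n \geq \ell \geq 2$, we have $n^{g-1} \nmid n^{g-2}$, so $\ind(\alpha)$ does not divide $n^{g-2}$. Proposition~\ref{proposition-index-av-Fp}\eqref{ind-divide-ng-1-Fp} ensures that a Severi--Brauer variety $P \to X$ of class $\alpha$ and relative dimension $n^{g-1}-1$ exists.

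Next I would invoke Corollary~\ref{corollary-itc-fail-av} with this abelian variety $X$ (of dimension $g$), the class $\alpha \in \Br(X)[n]$, and the Severi--Brauer variety $P \to X$ of relative dimension $r = n^{g-1}-1$. The hypotheses are that $n$ divides $(g-1)!$ (given) and that $\ind(\alpha)$ does not divide $n^{g-2}$ (just established). The corollary then asserts both that the $\ell$-adic integral Tate conjecture fails on $P$ in codimension $r$ and that the explicit class $\delta \in \rH^{2r}_{\et}(P, \bZ_\ell(r))^{\Tate}$ defined by~\eqref{delta-av-p} is not algebraic. This is exactly the conclusion of the theorem.

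The only verification that requires mild attention is the applicability of Corollary~\ref{corollary-itc-fail-av}, namely that $\alpha$ is $\ell$-adically topologically trivial (so that the formula~\eqref{delta-av-p} is meaningful via Lemma~\ref{lemma-cohomology-SB-k}), but this is built into its construction through $\varepsilon(b/n)$ in Proposition~\ref{proposition-index-av-Fp}. There is no genuine obstacle in the proof: the conceptual content has been packaged into Proposition~\ref{proposition-index-av-Fp} (where the sharpness $\ind(\alpha) = n^{g-1}$ uses Theorem~\ref{theorem-lower-bound-general-k} applied to the very generic product of elliptic curves, via Lemma~\ref{lemma-generic-product-Fp}) and into Corollary~\ref{corollary-itc-fail-av} (where the Tate class $\delta$ is constructed via Lemma~\ref{lemma-weaker-bound-AV-Fp}). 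The main content to stress in the writeup is therefore just the numerical matching of parameters $t = g-1$ and $r = n^{g-1}-1$.
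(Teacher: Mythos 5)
Your proposal is correct and matches the paper's proof exactly: the paper also deduces from Proposition~\ref{proposition-index-av-Fp}\eqref{ind-not-divide-ng-2-Fp} that $\ind(\alpha)$ does not divide $n^{g-2}$ and then invokes Corollary~\ref{corollary-itc-fail-av}. Your additional remarks on the $\ell$-adic topological triviality of $\alpha$ and the numerical matching of parameters are sound but not strictly needed, as they are already built into the cited results.
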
 

\begin{proof}
By Proposition~\ref{proposition-index-av-Fp}\eqref{ind-not-divide-ng-2-Fp} $\ind(\alpha)$ does not divide $n^{g-2}$, so Corollary~\ref{corollary-itc-fail-av} applies. 
\end{proof} 

\begin{proof}[Proof of Theorem~\ref{theorem-ITC}]
Take $g = \ell + 1$ and $n = \ell$ in Theorem~\ref{theorem-ITC-precise}. 
Note that $\rH^*_{\et}(P, \bZ_{\ell})$ is torsion-free by Lemma~\ref{lemma-cohomology-SB-k}\eqref{integral-coh-P}. 
\end{proof} 


\providecommand{\bysame}{\leavevmode\hbox to3em{\hrulefill}\thinspace}
\providecommand{\MR}{\relax\ifhmode\unskip\space\fi MR }
\providecommand{\MRhref}[2]{%
  \href{http://www.ams.org/mathscinet-getitem?mr=#1}{#2}
}
\providecommand{\href}[2]{#2}


\end{document}